\newtheorem{theorem}{Theorem}[section]
\newtheorem{lemma}[theorem]{Lemma}
\newtheorem{remark}[theorem]{Remark}
\newtheorem{hypothesis}[theorem]{Hypothesis}
\def \Rm {\mathbb{R}}
\def \Sm {\mathbb{S}}
\newcommand{\ba}{\mathbf a}
\newcommand{\bb}{\mathbf b} 
\newcommand{\be}{\mathbf e}
\newcommand{\bt}{\mathbf t}
\newcommand{\bC}{\mathbf C}
\newcommand{\bZ}{\mathbf Z}
\newcommand{\wtA}{ {\widetilde A} }
\newcommand{\wtH}{ {\widetilde H} }
\newcommand{\cout}[1]{}
\def \qbar {\bar{q}}
\newcommand{\dprod}[2]{\left\langle #1, #2 \right\rangle}
\def \x {{\mathrm x}}
\def \rc {{\mathrm c}}
\title{Imaging of isotropic and anisotropic conductivities from power densities in three dimensions}
\author{Fran\c{c}ois Monard\thanks{Department of Mathematics, University of California, Santa Cruz CA 95064, USA. fmonard@ucsc.edu} \and Donsub Rim\thanks{Department of Applied Physics and Applied Mathematics, Columbia University, New York NY, USA. dr2965@columbia.edu}}
\begin{document}
\maketitle

\begin{abstract}
    We present numerical reconstructions of anisotropic conductivity tensors in
    three dimensions, from knowledge of a finite family of power density
    functionals. Such a problem arises in the coupled-physics imaging modality
    Ultrasound Modulated Electrical Impedance Tomography for instance. We
    improve on the algorithms previously derived in \cite{Bal2011a,Monard2012a}
    for both isotropic and anisotropic cases, and we address the well-known 
    issue of vanishing determinants in particular. The algorithm is
    implemented and we provide numerical results that illustrate the 
    improvements.
\end{abstract}

\section{Introduction}

We present a numerical implementation of reconstruction algorithms previously
derived in \cite{Bal2011a,Monard2012a} for isotropic and anisotropic
conductivity tensors in three spatial dimensions, from knowledge of a finite
number of so-called {\em power density} measurements. Put in mathematical
terms,  the problem considered is to reconstruct $\gamma:X\to S_3(\Rm)$ a
symmetric, uniformly elliptic\footnote{$\gamma$ is {\em uniformly elliptic} if
there exists a constant $\kappa\ge 1$ such that $\kappa^{-1} |\xi|^2 \le
\gamma(x)\xi\cdot\xi \le \kappa |\xi|^2$ for every $x\in X$ and $\xi\in
\Rm^3$.}  conductivity tensor on a given bounded domain $X\subset \Rm^3$ from
the knowledge of a finite collection of internal functionals of the form
$H_{ij}(\x) = \gamma(\x) \nabla u_i(\x) \cdot \nabla u_j(\x)$ for $1\le i,j\le J$, where each $u_i$ solves the conductivity equation
\begin{align}
    \nabla\cdot(\gamma\nabla u_i) = 0 \qquad (\text{in }X), \qquad u|_{\partial X} = g_i \quad (\text{prescribed}). 
    \label{eq:conductivity}
\end{align}

The inverse conductivity problem from power densities belongs to the family of
{\em hybrid} (or {\em coupled-physics}) inverse problems, whose primary purpose
is to design high-contrast, high-resolution medical imaging modalities by
coupling two traditional imaging techniques with complementary strengths \cite{Arridge2012,Bal2013e}. 

Two examples of hybrid models couple conductivity imaging with ultrasonic
waves, one so-called Impedance Acoustic Tomography \cite{Gebauer2009}, and the
other, Ultrasound-Modulated Electrical Impedance Tomography \cite{Ammari2008}.
These modalities ultimately lead to an inverse problem where one is provided 
with internal functionals (even though the method remains non-invasive) to
reconstruct the internal, anisotropic conductivity. Such a problem has received
much attention over the past few years, both in theoretical and numerical
aspects
\cite{Capdeboscq2009,Bal2011a,Bal2012e,Kocyigit2012,Monard2012a,Monard2011,Monard2011a,Bal2012i,Bal2013c,AlessandriniNesi2015,Bellis2016}.
In particular, the first author's prior work on the topic has consisted of the
derivation of explicit reconstruction algorithms for the non-linear
problem in all dimensions $d\ge 2$, and the analysis of their stability. These results 
serve as a justification that power density measurements show much promise in 
their ability to give access to conductivities at higher resolution than from 
classical Dirichlet-to-Neumann data, and furthermore give access to anisotropic 
features which are traditionally unavailable in the classical 
Calder\'on's problem.

Inverse conductivity problems share many similarities with inverse elasticity
problems \cite{Bal2013b,Lai2014,Bal2015,DiFazio2017}, and some of the current
framework also applies there as well, see \cite{Bal2015}. Other internal functionals for inverse conductivity may be considered, for instance {\em current densities}, see \cite{Deugirmenci2007,Yan2010,Nachman2011,Nachman2009,Bal2013,Bal2014,Montalto2017}.

Implementations often use iterative methods as in
\cite{Ammari2008,Bal2013c,Hoffmann2014}, for which finding a good initial guess
can be crucial. The approach presented here consists in implementing explicit
inversion algorithms, which may either provide satisfactory reconstructions in
some cases, or a good initial guess for further improvements in others.
Previous implementations in two dimensions were presented in \cite{Monard2011}
and we now present a three-dimensional implementation. In this problem, the
transition from two to three spatial dimensions requires additional technical
considerations, even if one ignores the issue of computational cost. 
\begin{itemize}
    \item In the isotropic case where one must reconstruct a function $R:X\to SO(3)$ by integration of a dynamical system along curves, a choice needs to be made on how to parameterize $SO(3)$, involving a number of parameters between $3$ and $9$. Too few parameters (3: the Euler angles) lead to singularity issues not due to the actual problem but to the parameterization, see \cite{Stuelpnagel1964}; too many parameters (9, the full matrix) increases redundancy, computational cost and complexity of the dynamical system to be integrated. In this article, we use unit quaternions ${\mathbb H}$, a $4$-parameter family describing $SO(3)$ non-singularly. This description gives a good dimensionality tradeoff, moreover computations below show that the dynamical system parameterized in that way admits a rather symmetric form, easier to implement than the previously derived system for the $9$-parameter rotation matrix. While the correspondence ${\mathbb H}\to SO(3)$ is 2-to-1, this is the price to pay for using a non-singular parameterization of $SO(3)$, and in fact does not cause specific issues in the implementation.  
    \item The validity of the reconstruction algorithms comes with conditions on the boundary conditions $g_i$, which are easy to satisfy in two dimensions, much less obvious in three \cite{Alberti2016,Capdeboscq2015,Bal2012i}. In particular, two issues associated with this are: (i) how to find boundary conditions satisfying the validity conditions; (ii) if more than the minimal number of solutions is needed, how does one efficiently determine which set works locally, and how does one combine the local reconstructions into a global one ? While (i) is a difficult theoretical question which continues to receive attention and is not the focus of the present article, we address (ii) as follows: in \cite{Bal2011a}, it was suggested to decompose the domain and patch local reconstructions together, but we will give a way to do this globally at once, allowing for more efficiency in the reconstruction process.
\end{itemize}

\paragraph{Outline.} We first present in Section \ref{sec:summary} a summary of the problems considered, their dimension and the existing approaches, before discussing in Section \ref{sec:algo_iso} the derivation of a reconstruction algorithm in the isotropic case, making use of quaternion algebra. We then adapt in Section \ref{sec:algo_aniso} the reconstruction algorithms for anisotropic tensors to three dimensions. The numerical simulations are presented in Section \ref{sec:numerics}, and some concluding remarks are provided in Section \ref{sec:conclusion}.

\section{Summary of problems considered and existing approaches} \label{sec:summary}

We briefly recall the results from \cite{Bal2011a,Monard2011a,Monard2011,Monard2012a,Bal2012e,Monard2012b}, and we will build upon them. The analysis there consists in finding a minimal set of functionals for which reconstructions algorithms can be derived, with stability estimates in optimal spaces. For such analyses, it is natural to consider decomposing an anisotropic conductivity $\gamma$ into the product $\gamma = \tau\tilde\gamma$ with $\tau$ a scalar function and $\tilde \gamma$ a tensor satisfying $\det \tilde \gamma = 1$. Then the following three settings have been considered. 

\noindent{\bf (1)} Reconstruction of an isotropic ($\tilde \gamma = Id$) conductivity. $\tau$ is usually denoted $\sigma$ in this case. 

\noindent{\bf (1')} In the anisotropic case, reconstruction of the scalar $\tau$ assuming knowledge of $\tilde\gamma$.

\noindent{\bf (2)} In the anisotropic case, reconstruction of $\tilde \gamma$, then $\tau$. 

\noindent Problem {\bf (1')} is a generalization of Problem {\bf (1)}, of same dimensionality and resolution approach. Incorporating the presence of a non-trivial $\tilde\gamma$ to solve {\bf (1')} is addressed in \cite[Sec. 3.1]{Monard2012a} and will not be further addressed in the present article.

\subsection{Local reconstruction algorithms} 

We recall the results in any dimension $n\ge 2$. 

\paragraph{Local resolution of Problem (1).} See, e.g., \cite{Bal2011a,Monard2011a}. Suppose $X'\subset X$ and assume that $n$ solutions $u_1,\dots,u_n$ of \eqref{eq:conductivity} (with associated power densities $\{H_{ij}\}_{1\le i\le j\le n}$) satisfy 
\begin{align}
    \inf_{\x\in X'} \det (\nabla u_1(\x), \dots, \nabla u_n(\x)) \ge c_0 >0.
    \label{eq:gradients}
\end{align}
Then one may reconstruct $\sigma|_{X'}$ in Problem {\bf (1)}
up to a constant, from $\{H_{ij}|_{X'}\}_{1\le i\le j\le n}$ with a $W^{1,\infty}\to W^{1,\infty}$ stability estimate, see \cite[Theorem 2.3]{Monard2011a}. In a nutshell, the quantities $\nabla u_1, \dots, \nabla u_n$ are known from data up to an unknown rotation matrix $R:\Omega\to SO(n)$, and the approach consists in deriving explicit equations for the full gradients $\nabla\log \sigma$ and $\nabla R$ as functions of $R$ and the known data. Such equations then turn into dynamical systems along any desired integration curve, allowing the reconstruction of $\sigma$ along a family of curves covering $X'$, see Section \ref{sec:algo_iso} for details. An alternate reconstruction approach was also proposed in \cite[Section 5]{Monard2011a} involving solving a coupled elliptic system, not covered further here.  

\paragraph{Local resolution of Problem (2).} See, e.g., \cite{Monard2011,Monard2012a,Bal2012e}. To address Problem {\bf (2)}, a family $(u_1,\dots,u_n)$ satisfying \eqref{eq:gradients} needs to be augmented with additional solutions, call them $(v_1,\dots,v_m)$, with $m$ to be determined. For each additional solution $v_j$, the power densities $\{\gamma\nabla u_i\cdot \nabla v_j\}_{i=1}^n$ can be exploited to bring potentially $1+n(n-1)/2$ pointwise linear constraints on $\tilde \gamma$ in the form ``$\text{tr }( M_{jp} (\x) \tilde\gamma(\x)) = 0$'', where $\{M_{jp} (\x)\}_{p=1}^{1+n(n-1)/2}$ are matrices known from power densities. If $m$ is then chosen large enough, and if the family 
\[ \left\{M_{jp}(\x), \quad 1\le j\le m, \quad 1\le p\le 1+n(n-1)/2 \right\} \] 
spans the hyperplane $\{\tilde\gamma(\x)\}^\perp$ of dimension $n^2-1$ in $M_n(\Rm)$ at every point of $X'$, then one may reconstruct $\tilde \gamma$ pointwise by choosing a normal to that hyperplane. Such reconstructibility conditions can be formulated as a constraint of the form 
\begin{align}
    \inf_{\x\in X'} {\cal P} ( \{H_{ij}(\x), \nabla H_{ij}(\x)\}_{1\le i\le j\le n+m}) \ge c >0,
    \label{eq:matrices}
\end{align}
with ${\cal P}$ a polynomial, see \cite[Eq. (11)]{Monard2012a}. When \eqref{eq:matrices} is satisfied, then $\tilde \gamma|_{X'}$ can be reconstructed pointwise, with $L^\infty$ norm stably controlled by the $W^{1,\infty}$ norm of the power densities, as proved in \cite[Theorem 2.7]{Monard2012a}. Note that this loss of one derivative was shown to be optimal in \cite{Bal2012e}. Once $\tilde \gamma$ is reconstructed $\tau$ can in turn be reconstructed, either as in Problem {\bf (1')} (involving a dynamical approach similar to Problem {\bf (1)}), or more directly as the additional data also allows for more efficient reconstruction of $\tau$ (i.e., no dynamical system required), see Section \ref{sec:algo_aniso} for details. 

Now considering the case $n=3$, we see that the reconstructibility condition \eqref{eq:matrices} can in principle be satisfied with only $2$ additional solutions (since they generate 8 orthogonality constraints in a 9-dimensional space), hence the name of $3+2$ algorithm in Section \ref{sec:algo1} below.

\subsection{From local to global}

In light of these local algorithms, one may wonder whether conditions \eqref{eq:gradients} and \eqref{eq:matrices} can hold globally on $X$ by choosing appropriate solutions $(u_1,\dots,u_n,v_1,\dots,v_m)$. The answer to this question is well-understood and positive in two dimensions \cite{Alessandrini2001}. In higher dimensions, it holds in a few cases including tensors close enough to constant (see \cite[Theorem 2.8]{Monard2012a}), yet the question is generally open, including counterexamples \cite{Alberti2016,Capdeboscq2015,Capdeboscq2009}. Thus in three dimensions, it is currently more reasonable to think of a strategy which patches together local reconstructions, by covering $X$ with open subdomains $X_1, \dots, X_N$ and finding solutions satisfying conditions \eqref{eq:gradients} and \eqref{eq:matrices} on each $X_j$ which guarantee reconstructibility of $\gamma$ on each $X_j$. That such a scenario is possible can be proved under mild regularity assumptions on $\gamma$ and the Runge approximation property (see \cite[Theorem 2.5]{Monard2012a}).

A first patching approach was first described in \cite[Section 5.2]{Bal2011a}, requiring to keep track of (i) the covering, and (ii) which set of solutions satisfies conditions \eqref{eq:gradients}-\eqref{eq:matrices} on which subdomain. We derive an approach below which does not require keeping track of such a covering explicitly, and derives global equations to reconstruct the unknowns all at once. Such an approach implicitly exploits the fact that, given a family of solutions, conditions \eqref{eq:gradients} and \eqref{eq:matrices} hold locally for some subfamily of these solutions. See Section \ref{sec:algo2} for detail.

\section{Isotropic reconstructions from $3$ solutions} \label{sec:algo_iso}

We now recall the derivation of the reconstruction for the isotropic
case, following, e.g., \cite{Bal2011a}. Consider the reconstruction of a scalar conductivity $\sigma$ from knowledge of power densities $H_{ij} = \sigma\nabla u_i\cdot\nabla u_j$ for $1\le i,j\le 3$ corresponding to three solutions 
\begin{align*}
    \nabla\cdot(\sigma\nabla u_j) = 0 \qquad (\text{in } X), \qquad u_j|_{\partial X} = g_j \quad \text{(prescribed)}, \qquad 1\le j\le 3,
\end{align*}
such that $(\nabla u_1, \nabla u_2, \nabla u_3)$ is linearly independent at
every point of an open subset $\Omega\subset X$. Define $S_i := \sqrt{\sigma} \nabla u_i$, $1\le i\le 3$, whose inner products are known since $H_{ij} = \dprod{S_i}{S_j}$, and satisfy the PDEs
\begin{align*}
    \nabla\cdot S_i + F\cdot S_i = 0, \qquad \nabla\times S_i - F\times S_i = 0, \qquad F:= \frac{1}{2} \nabla \log \sigma, \qquad 1\le i\le 3.  
\end{align*}
Let $S$ be the $M_3(\Rm)$-valued matrix with colums $(S_1, S_2, S_3)$. Executing a QR decomposition on $S$, we define an $SO(3)$-valued function $R = ST^T$, with transition matrix (known from power densities)
\begin{align*}
    T = \{t_{ij}\}_{1\le i,j\le 3} = \left[
    \begin{array}{ccc}
	H_{11}^{-\frac{1}{2}} & 0 & 0 \\
	-H_{12} H_{11}^{-\frac{1}{2}} d^{-1} & H_{11}^{\frac{1}{2}} d^{-1} & 0 \\
	(H_{12}H_{23} - H_{22}H_{13})(dD)^{-1} & (H_{12}H_{13} - H_{11} H_{23}) (dD)^{-1} & dD^{-1}
    \end{array}
\right]
\end{align*}
with $d:= (H_{11}H_{22} - H_{12}^2)^{\frac{1}{2}}$ and $D = (\det H)^{\frac{1}{2}}$. Denoting $t^{ij}$ the entries of $T^{-1}$, we further define for $1\le i,k\le 3$
\begin{align}
    V_{ik} := (\nabla t_{ij}) t^{jk}, \qquad V_{ik}^s = \frac{1}{2} (V_{ik}+V_{ki}), \qquad V_{ik}^a := \frac{1}{2} (V_{ik}-V_{ki}).
    \label{eq:Vik}
\end{align}
Then the PDEs for $(S_1,S_2,S_3)$ may turn into PDEs for $(R_1,R_2,R_3)$, the columns of the $R$ matrix, given by 
\begin{align*}
    \nabla\cdot R_i = V_{ik}\cdot R_k - F\cdot R_i, \qquad \nabla\times R_i = V_{ik} \times R_k + F\times R_i, \qquad i=1,2,3.
\end{align*}
Out of this system, we derive in \cite[Eq. (18)]{Bal2011a} the equation
\begin{align}
  F = \frac{1}{2} \nabla \log\sigma = \frac{1}{6} \nabla \log \det H + \frac{2}{3} \dprod{V_{ij}^s}{R_i} R_j,
  \label{eq:F}
\end{align}
as well as, for $m,i=1,2,3$ (see \cite[Eq. (44)]{Monard2011a}),
\begin{align}
  \partial_m R_i = \dprod{\be_m}{V_{ik}^a} R_k - \dprod{R_k}{\be_m} V_{ik}^s + \dprod{V_{jk}^s}{R_i}\dprod{R_k}{\be_m} R_j + \dprod{F}{R_i} \be_m - \dprod{R_i}{\be_m} F,
  \label{eq:dmR}
\end{align}
with $(\be_1,\be_2,\be_3)$ the canonical basis of $\Rm^3$.

The system \eqref{eq:F}-\eqref{eq:dmR} can be viewed as a closed first-order
(over-determined) system for $(\sigma,R)$ which can then be integrated along a
family of curves through the domain (all curves along a coordinate
axis for instance). The function $\x\mapsto R(\x)$ is an $SO(3)$-valued 
function, and, as mentioned in the Introduction, we first need to find a good
parameterization of $R$ in order to setup a proper dynamical system along
curves. We choose the quaternionic chart as it is nowhere singular, and we now
explain how to set up the corresponding dynamical system. In Section
\ref{sec:quaternions}, we recall some general properties of quaternions, and
then elaborate on how we may lift a differential of an $SO(3)$-valued function into the differential of a smooth quaternionic lift of it. We use this in Section \ref{sec:Qdynsys}, to turn the differential system \eqref{eq:F}-\eqref{eq:dmR} for $(\sigma,R)$ into a differential system for $(\sigma,q)$, with $q$ a quaternionic lift of $R$. Finally, we describe in Section \ref{sec:Qalgo} how to implement the latter system. 

\subsection{Quaternionic algebra} \label{sec:quaternions}

Real quaternions is the four-dimensional vector space spanned by $(1,\be_1,\be_2,\be_3)$ equipped with the additional non-commutative multiplication ``$\cdot$'', for which $1$ is the unit and the other basis elements satisfy $\be_i\cdot\be_i = -1$ for $i=1,2,3$ as well as
\begin{align*}
  \be_1\cdot\be_2 = -\be_2\cdot\be_1 = \be_3, \qquad \be_2\cdot\be_3 = -\be_3\cdot\be_2 = \be_1, \qquad \be_3\cdot\be_1 = -\be_1\cdot\be_3 = \be_2.
\end{align*}
The subspace $\Rm 1$ is the space of ``scalars'' while the copy of $\Rm^3$ spanned by $(\be_1,\be_2,\be_3)$ is called ``vectors''. For two vectors $u,v$ viewed as quaternions with no scalar part, one may easily establish that 
\begin{align*}
  u\cdot v = - \dprod{u}{v} + u\times v,
\end{align*}
where $\dprod{\cdot}{\cdot}$ is the standard Euclidean dot product and $\times$
is the cross product. In what follows, we will use the cross product notation
for two quaternions only if they have no scalar part. In particular, we can 
write the identity
\begin{align*}
  u\times v = \frac{1}{2} (u\cdot v - v\cdot u).
\end{align*}

A quaternion is thus of the form $q = q_0 + q_1 \be_1 + q_2\be_2 + q_3\be_3$ with real components. We define its conjugate $\qbar := q_0 - q_1\be_1 - q_2\be_2- q_3\be_3$ with the property that $q\cdot\qbar = \qbar\cdot q = |q|^2:= q_0^2 + q_1^2 + q_2^2 + q_3^2$. Vectors viewed as quaternions satisfy $\bar{v} = -v$. 

Unit quaternions ${\mathbb H} = \{q: |q| = 1\}$ are isomorphic to $\Sm^3$ and form a 2-to-1 covering of $SO(3)$ via the following map: for $q$ with $|q|=1$ and $v = v_1 \be_1 + v_2 \be_3 + v_3 \be_3$ a vector, the linear map of $v$ defined as $T_q v := q\cdot v\cdot \qbar$ has no scalar part (i.e., defines a vector) and has the same norm as $v$, as can be seen from the identity
\begin{align*}
  |q\cdot v\cdot \qbar|^2 = q\cdot v\cdot \qbar \cdot \overline{q\cdot v\cdot \qbar} = - q \cdot v\cdot \qbar \cdot q \cdot v \cdot \qbar = q \cdot |v|^2 \cdot \qbar = |v|^2. 
\end{align*}
It is also orientation-preserving, as can be seen from the identity
\begin{align*}
  (q\cdot \be_1 \cdot \qbar) \times (q\cdot \be_2 \cdot \qbar) = q\cdot \be_3 \cdot \qbar.
\end{align*}
One may then construct an $SO(3)$ valued function $(R_1,R_2,R_3)$ out of any
${\mathbb H}$-valued function by setting $R_i := q\cdot \be_i\cdot \qbar$ for
$i=1,2,3$. The fact that quaternions form a 2-to-1 covering of $SO(3)$ follows 
from the observation that $T_q = T_{-q}$. Note also that $T^{-1}_q = T^*_q = T_{\qbar}$. In particular, we have that 
\begin{align*}
  \dprod{u}{q\cdot v\cdot \qbar} = \dprod{\qbar\cdot u \cdot q}{v}, 
\end{align*}
for any pair of vectors $(u,v)$. 

We now start from an $SO(3)$-valued function $t\mapsto R(t)\in SO(3)$ depending differentiably on a
parameter $t$, then explain how, if $q(t)$ is a differentiable quaternionic lift
of $R(t)$ in the sense that $R_j = q\cdot \be_j\cdot \qbar$ for $j=1,2,3$, we can
determine $\partial_t q$ from $\partial_t R$. 
 
\begin{lemma} \label{lem:qbardotq} If $R(t)$ is a differentiable $SO(3)$-valued function and $q(t)$ is a differentiable quaternionic lift of it, then 
    \begin{align}
	2\qbar\cdot \partial_t q = \dprod{\qbar\cdot \partial_t R_2 \cdot q}{\be_3} \be_1 + \dprod{\qbar\cdot \partial_t R_3 \cdot q}{\be_1} \be_2 + \dprod{\qbar\cdot \partial_t R_1 \cdot q}{\be_2} \be_3.
	\label{eq:qbardmq}
    \end{align}
\end{lemma}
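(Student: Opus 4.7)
The plan is to reduce the identity to an explicit determination of the three real components of $\qbar\cdot\partial_t q$ by extracting them one by one from $\partial_t R_1,\partial_t R_2,\partial_t R_3$.

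First I would observe that since $q(t)$ is a unit quaternion, $q\cdot\qbar = 1$, and differentiating gives $\partial_t q\cdot \qbar + q\cdot \partial_t \qbar = 0$, which upon left-multiplication by $\qbar$ and right-multiplication by $q$ yields $\qbar\cdot\partial_t q = -\partial_t \qbar\cdot q$. Since the right-hand side is the conjugate of the left-hand side, the quaternion $\Omega := \qbar\cdot\partial_t q$ has no scalar part, i.e., it is a pure vector. Writing $\Omega = \omega_1\be_1+\omega_2\be_2+\omega_3\be_3$, the goal reduces to recovering the three reals $\omega_1,\omega_2,\omega_3$ from $\partial_t R_1,\partial_t R_2,\partial_t R_3$.

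Next I would differentiate the lift relation $R_j = q\cdot\be_j\cdot\qbar$ for $j=1,2,3$, substituting $\partial_t q = q\cdot \Omega$ and $\partial_t \qbar = -\Omega\cdot\qbar$. This produces
\begin{align*}
\partial_t R_j \;=\; q\cdot(\Omega\cdot \be_j - \be_j\cdot \Omega)\cdot \qbar \;=\; 2\,q\cdot (\Omega\times \be_j)\cdot \qbar,
\end{align*}
where the last equality uses the identity $u\times v = \tfrac12(u\cdot v - v\cdot u)$ for pure-vector quaternions recalled in Section \ref{sec:quaternions}. Applying $T_{\qbar} = T_q^{-1}$ (i.e.\ pre-multiplying by $\qbar$ and post-multiplying by $q$) gives the clean formula
\begin{align*}
\qbar\cdot \partial_t R_j\cdot q \;=\; 2\,\Omega\times \be_j, \qquad j=1,2,3.
\end{align*}

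Finally, to isolate each $\omega_k$, I would take inner products against the appropriate basis vector: using the cyclic relations $\be_i\times \be_j = \be_k$, a direct computation yields
\begin{align*}
\langle \Omega\times \be_2, \be_3\rangle = \omega_1, \qquad \langle \Omega\times \be_3, \be_1\rangle = \omega_2, \qquad \langle \Omega\times \be_1, \be_2\rangle = \omega_3,
\end{align*}
so that $\tfrac12\langle \qbar\cdot \partial_t R_2\cdot q,\be_3\rangle = \omega_1$, and similarly for $\omega_2,\omega_3$. Summing against $\be_1,\be_2,\be_3$ reproduces exactly the right-hand side of \eqref{eq:qbardmq} equal to $2\Omega = 2\qbar\cdot\partial_t q$.

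There is no real obstacle here beyond careful bookkeeping. The one conceptual point worth flagging is the very first step: one must know that $\qbar\cdot\partial_t q$ is a pure vector, otherwise there would be four unknowns to recover and three scalar equations would not suffice; once that is established the remainder is forced by the cyclic $SO(3)$ structure.
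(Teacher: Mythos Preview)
Your proof is correct and follows essentially the same approach as the paper's: both establish that $\qbar\cdot\partial_t q$ is a pure vector, derive $\qbar\cdot\partial_t R_j\cdot q = 2(\qbar\cdot\partial_t q)\times\be_j$, and then recover the components via the cyclic inner products. The only cosmetic difference is that you show purity via the conjugation identity $\overline{\qbar\cdot\partial_t q} = -\qbar\cdot\partial_t q$, while the paper computes the scalar part directly as $\tfrac12\partial_t|q|^2=0$.
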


\begin{proof}[Proof of Lemma \ref{lem:qbardotq}] Note that $\qbar\cdot \partial_t q$ is a vector, as by direct calculation $(\qbar\cdot \partial_t q)_0 = \frac{1}{2} \partial_t |q|^2 = 0$. Moreover, since $\partial_t (q\cdot\qbar) =0$, we have that $\partial_t \qbar = -\qbar \cdot \partial_t q \cdot \qbar$. We have 
  \begin{align*}
    \partial_t R_i &= \partial_t (q\cdot\be_i\cdot \qbar) \\
    &= \partial_t q \cdot \be_i \cdot \qbar + q\cdot\be_i \cdot \partial_t \qbar \\
    &= \partial_t q \cdot \be_i \cdot \qbar - q\cdot \be_i \cdot \qbar\cdot \partial_t q \cdot \qbar \\
    &= q\cdot (\qbar \cdot \partial_t q \cdot \be_i - \be_i \cdot\qbar\cdot \partial_t q)\cdot \qbar, 
  \end{align*}
  and thus 
  \begin{align*}
    \frac{1}{2} \qbar\cdot \partial_t R_i \cdot q = \frac{1}{2} (\qbar \cdot \partial_t q \cdot \be_i - \be_i \cdot\qbar\cdot \partial_t q) = (\qbar\cdot \partial_t q)\times \be_i, \qquad i=1,2,3.
  \end{align*}
  Equation \eqref{eq:qbardmq} then follows by using the fact that any vector $u$ can be recovered from $u\times \be_i$ via the formula
  \begin{align*}
    u = \dprod{u\times \be_2}{\be_3} \be_1 + \dprod{u\times \be_3}{\be_1} \be_2 + \dprod{u\times \be_1}{\be_2} \be_3.
  \end{align*}
  Lemma \ref{lem:qbardotq} is proved. 
\end{proof}

In what follows, $R$ will depend on three coordinates and Lemma \ref{lem:qbardotq} provides the basis for computing partial derivatives of the lift $q$ of $R$.

\subsection{Dynamical system for $q$} \label{sec:Qdynsys}

We now use Lemma \ref{lem:qbardotq} to derive a dynamical system for $(\sigma,q)$, with $q$ a quaternionic lift of $R$, from the dynamical system for $(\sigma,R)$ in \eqref{eq:F}-\eqref{eq:dmR}.  

\begin{theorem}[Dynamical system for $q$]\label{thm:dynsysq} Let $(\sigma,R)$
    satisfy equations \eqref{eq:F}-\eqref{eq:dmR} and let $q$ be a quaternion-valued function such that $R_j = q\cdot\be_j\cdot \qbar$ for $j=1,2,3$. Then for any $1\le m\le 3$, $q$ satisfies the dynamical system 
    \begin{align}
	\partial_m q = \frac{1}{2} (q\cdot \ba^m(q) + \bb^m\cdot q), \qquad \bb^m :=  \left(\frac{1}{6} \nabla \log\det H\right)\times \be_m, 
	\label{eq:ODEq}
    \end{align}
    and where, denoting $T_{\qbar}\be_m = \bt = t_1 \be_1 + t_2 \be_2 + t_3 \be_3$, the vector $\ba^m = a^m_1 \be_1 + a^m_2 \be_2 + a^m_3 \be_3$ reads
    \begin{align}
	\begin{split}
	    a^m_1 &= \dprod{\be_m}{V_{23}^a} + t_k ((T_{\qbar} V_{3k}^s)_2 - (T_{\qbar}V_{2k}^s)_3 ) + \frac{2}{3} ( (T_{\qbar} V_{2k}^s)_k t_3 - (T_{\qbar} V_{3k}^s)_k t_2), \\
	    a^m_2 &= \dprod{\be_m}{V_{31}^a} + t_k ((T_{\qbar} V_{1k}^s)_3 - (T_{\qbar}V_{3k}^s)_1 ) + \frac{2}{3} ( (T_{\qbar} V_{3k}^s)_k t_1 - (T_{\qbar} V_{1k}^s)_k t_3), \\
	    a^m_3 &= \dprod{\be_m}{V_{12}^a} + t_k ((T_{\qbar} V_{2k}^s)_1 - (T_{\qbar}V_{1k}^s)_2 ) + \frac{2}{3} ( (T_{\qbar} V_{1k}^s)_k t_2 - (T_{\qbar} V_{2k}^s)_k t_1), 
	\end{split}
	\label{eq:a}  
    \end{align}
    where any repeated index is being summed over. 
\end{theorem}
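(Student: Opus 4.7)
The plan is to compute both sides of \eqref{eq:ODEq} by starting from Lemma \ref{lem:qbardotq}, substituting the explicit expression \eqref{eq:dmR} for $\partial_m R_i$, and simplifying via the fact that $T_{\qbar}:v\mapsto \qbar\cdot v\cdot q$ is an orthogonal map (preserving inner products and cross products) that sends $R_k$ to $\be_k$ and $\be_m$ to $\bt$. Writing $\tilde V_{ik}^s := T_{\qbar}V_{ik}^s$ and $\tilde F := T_{\qbar}F$, the scalar coefficients $\dprod{V_{jk}^s}{R_i} = (\tilde V_{jk}^s)_i$ and $\dprod{R_k}{\be_m} = t_k$ are invariant under $T_{\qbar}$, and applying $T_{\qbar}$ term by term to \eqref{eq:dmR} gives
\[ \qbar\cdot\partial_m R_i\cdot q = \dprod{\be_m}{V_{ik}^a}\,\be_k - t_k\,\tilde V_{ik}^s + (\tilde V_{jk}^s)_i\, t_k\, \be_j + \tilde F_i\, \bt - t_i\, \tilde F. \]

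Next I substitute $F = G + \tfrac{2}{3}\dprod{V_{jk}^s}{R_j} R_k$ from \eqref{eq:F}, with $G := \tfrac{1}{6}\nabla\log\det H$. This yields $\tilde F_i = \tilde G_i + \tfrac{2}{3}(\tilde V_{ji}^s)_j$ with $\tilde G := T_{\qbar}G$. Taking the projection of $\qbar\cdot\partial_m R_2\cdot q$ onto $\be_3$ as prescribed by Lemma \ref{lem:qbardotq} (and cyclically for $R_3$, $R_1$), the $\tilde F$ contribution becomes $\tilde F_2 t_3 - t_2 \tilde F_3$, which splits as $(\tilde G_2 t_3 - \tilde G_3 t_2) + \tfrac{2}{3}\bigl((\tilde V_{j2}^s)_j t_3 - (\tilde V_{j3}^s)_j t_2\bigr)$. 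The first piece equals $(\tilde G\times\bt)_1 = (T_{\qbar}(G\times\be_m))_1 = (T_{\qbar}\bb^m)_1$; the second, using the symmetry $V_{ij}^s = V_{ji}^s$ and relabeling $j\to k$, reproduces the $\tfrac{2}{3}$ terms of $a_1^m$, while the $V^a$ and two $\tilde V^s$ contributions produce the remaining terms of $a_1^m$. By cyclicity of the lemma the analogous identities hold for $a_2^m$ and $a_3^m$, so altogether
\[ 2\qbar\cdot\partial_m q = \ba^m + T_{\qbar}\bb^m. \]

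Left-multiplying by $q$ and using $q\cdot T_{\qbar}(\bb^m) = q\cdot\qbar\cdot\bb^m\cdot q = \bb^m\cdot q$ together with $q\cdot\qbar = 1$ produces \eqref{eq:ODEq}. The main obstacle is purely bookkeeping: one must carefully track all the indices through the five-term substitution of \eqref{eq:dmR} under $T_{\qbar}$ and recognize which pieces collapse into the cross-product form defining $\bb^m$ versus those that contribute to $\ba^m$ via the symmetry of $V^s$. Everything else is a direct consequence of the quaternionic algebra set up in Section \ref{sec:quaternions} and of Lemma \ref{lem:qbardotq}.
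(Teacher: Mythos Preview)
Your proof is correct and follows essentially the same route as the paper: apply Lemma~\ref{lem:qbardotq}, conjugate \eqref{eq:dmR} by $T_{\qbar}$ using that $T_{\qbar}$ is orthogonal and sends $R_k\mapsto\be_k$, $\be_m\mapsto\bt$, substitute \eqref{eq:F} for $F$, and identify the $G$-part as $T_{\qbar}\bb^m$ and the rest as $\ba^m$. The only cosmetic difference is that the paper groups the terms across $i$ (first, middle, last) before projecting, whereas you project each $\qbar\cdot\partial_m R_i\cdot q$ componentwise and then read off $a_1^m,a_2^m,a_3^m$; the computations are term-for-term identical.
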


\begin{proof}[Proof of Theorem \ref{thm:dynsysq}] Fix $1\le m\le 3$. Using Lemma \ref{lem:qbardotq} with $\partial_t \equiv \partial_m$, we read
    \begin{align}
	2\qbar\cdot \partial_m q = \dprod{\qbar\cdot \partial_m R_2 \cdot q}{\be_3} \be_1 + \dprod{\qbar\cdot \partial_m R_3 \cdot q}{\be_1} \be_2 + \dprod{\qbar\cdot \partial_m R_1 \cdot q}{\be_2} \be_3.
	\label{eq:qbardmq2}
    \end{align}
    Using that $R_i = q\cdot\be_i \cdot \qbar$ for $1\le i\le 3$, Equation \eqref{eq:dmR} becomes 
    \begin{align*}
	\qbar\cdot \partial_m R_i \cdot q &= \dprod{\be_m}{V_{ik}^a} \be_k - \dprod{q\cdot\be_k\cdot\qbar}{\be_m} \qbar\cdot V_{ik}^s\cdot q + \dprod{V_{jk}^s}{q\cdot \be_i\cdot \qbar}\dprod{q\cdot\be_k\cdot \qbar}{\be_m} \be_j \dots \\
	&\qquad + \dprod{F}{q\cdot\be_i\cdot\qbar} \qbar\cdot\be_m\cdot q - \dprod{q\cdot\be_i\cdot \qbar}{\be_m} \qbar\cdot F\cdot q.
    \end{align*}
    We now use the notation $(u)_i = \dprod{u}{\be_i}$ and the identity $\dprod{T_qu}{v} = \dprod{u}{T_{\qbar}v}$ to rewrite 
    \begin{align*}
	\qbar\cdot \partial_m R_i\cdot q &= \dprod{\be_m}{V_{ik}^a} \be_k + (T_{\qbar} \be_m)_k \left( (T_{\qbar}V_{jk}^s)_i \be_j - T_{\qbar} V_{ik}^s \right) + ((T_{\qbar}F)_i T_{\qbar} \be_m - (T_{\qbar}\be_m)_i T_{\qbar}F) \\
	&= \dprod{\be_m}{V_{ik}^a} \be_k + (T_{\qbar} \be_m)_k \left( (T_{\qbar}V_{jk}^s)_i \be_j - T_{\qbar} V_{ik}^s \right) + (T_{\qbar}F \times T_{\qbar} \be_m)\times \be_i.
    \end{align*}
    We now construct the right hand side of \eqref{eq:qbardmq2} by summing the equations above appropriately. Summing over the first terms above gives directly 
    \begin{align}
	\dprod{\be_m}{V_{23}^a} \be_1 + \dprod{\be_m}{V_{31}^a} \be_2 + \dprod{\be_m}{V_{12}^a} \be_3.
	\label{eq:term1}
    \end{align}
    Summing over the middle terms gives
    \begin{align}
	(T_{\qbar}\be_m)_k [ \left( (T_{\qbar} V_{3k}^s)_2 - (T_{\qbar} V_{2k}^s)_3 \right)\be_1 + \left( (T_{\qbar} V_{1k}^s)_3 - (T_{\qbar} V_{3k}^s)_1 \right)\be_2 + \left( (T_{\qbar} V_{2k}^s)_1 - (T_{\qbar} V_{1k}^s)_2 \right)\be_3 ]
	\label{eq:term2}
    \end{align}
    Summing the last terms gives directly $T_{\qbar}F \times T_{\qbar} \be_m$. Now equation \eqref{eq:F} becomes
    \begin{align*}
	F = G + \frac{2}{3} \dprod{V_{ij}^s}{q\cdot\be_i\cdot\qbar} q\cdot\be_j\cdot\qbar,\qquad G:= \frac{1}{6} \nabla\log\det H,
    \end{align*}
    so that the $T_{\qbar}F \times T_{\qbar} \be_m$ term becomes 
    \begin{align}
	T_{\qbar}F \times T_{\qbar} \be_m = T_{\qbar} G \times T_{\qbar}\be_m + \frac{2}{3} (T_{\qbar} V_{ij}^s)_i\ \be_j \times T_{\qbar}\be_m. 
	\label{eq:term3}
    \end{align}
    Summing \eqref{eq:term1}, \eqref{eq:term2} and \eqref{eq:term3} and equating with $2\qbar\cdot \partial_m q$, we arrive at 
    \begin{align*}
	2 \qbar\cdot \partial_m q = \ba^m + T_{\qbar} G \times T_{\qbar}\be_m, 
    \end{align*}
    where $\ba^m$ is given in \eqref{eq:a}. The expression for $\bb^m$ in \eqref{eq:ODEq} follows from the simplification
    \begin{align*}
	T_{\qbar} G \times T_{\qbar}\be_m = T_{\qbar} (G\times \be_m) = \qbar \cdot (G\times \be_m) \cdot q.
    \end{align*}
    Theorem \ref{thm:dynsysq} is proved. 
\end{proof}

\subsection{Reconstruction algorithm for $(\sigma,q)$} \label{sec:Qalgo}

\subsubsection{Evolving a unit quaternion along a curve}

Our plan is to integrate the differential system \eqref{eq:ODEq} along curves,
so we now explain how to numerically evolve a quaternionic variable along a curve $\x(t) = (x_1(t), x_2(t), x_3(t))$. For the time being, we denote $q(t)$ such a function without reference to the curve used. Over such a curve, the evolution equation takes the form
\begin{align}
    \frac{dq}{dt} = \frac{1}{2} (q\cdot \ba(q) + \bb\cdot q),  
    \label{eq:ODEq2}
\end{align}
with $\ba = \dot x_1 \ba^1 + \dot x_2\ba^2 + \dot x_3\ba^3$ and each $\ba^m$ defined in \eqref{eq:a}, similarly for $\bb = \dot x_1 \bb^1 + \dot x_2 \bb^2 + \dot x_3 \bb^3$ with each $\bb^m$ defined in \eqref{eq:ODEq}.

Upon viewing $q$ as a four-vector $\begin{bmatrix} q_0 & q_1 & q_2 & q_3 \end{bmatrix}^T$, $\ba = a_1\be_1+a_2\be_2+a_3\be_3$ and similarly for $\bb$, \eqref{eq:ODEq2} takes the form of the following matrix-vector multiplication 
\begin{align}
  \frac{d}{dt} \left[
\begin{array}{c}
    q_0 \\ q_1 \\ q_2 \\ q_3 
\end{array}
\right] = 
\frac{1}{2} \left( 
    \left[
\begin{array}{cccc}
    0 & -a_1 & -a_2 & -a_3 \\
    a_1 &  0 &  a_3 & -a_2  \\
    a_2 & -a_3 & 0 & a_1 \\
    a_3 & a_2 & -a_1 & 0 
\end{array}
\right] + \left[
\begin{array}{cccc}
    0 & -b_1 & -b_2 & -b_3 \\
    b_1 &  0 &  -b_3 & b_2  \\
    b_2 & b_3 & 0 & -b_1 \\
    b_3 & -b_2 & b_1 & 0 
\end{array}
\right]  \right)\    \left[
\begin{array}{c}
    q_0 \\ q_1 \\ q_2 \\ q_3 
\end{array}
\right],
\label{eq:dynsys2}
\end{align}
or in short, letting $Q = \begin{bmatrix} q_0 & q_1 & q_2 & q_3 \end{bmatrix}^T$ and $\Omega = A+B$ the sum of the $4\times 4$ matrices above,
\[
  \dot{Q}(t) =\frac{1}{2} \Omega(t,Q(t)) Q(t)
\quad
\text{ where } 
\quad 
\Omega(t,Q(t))\quad  \text{ is skew-symmetric.}
\]
This shows that this dynamical system has an obvious conserved quantity,
\[
\frac{\mathrm{d}}{\mathrm{d}t} \left( Q^T Q \right)
= \dot{Q}^T Q + Q^T \dot{Q}
= Q^T \Omega^T Q + Q^T \Omega Q = 0,
\]
by skew-symmetry of $\Omega$. Thus $|q|^2 = Q^T Q \equiv 1$ for all time $t$ at the continuous level. One way to enforce norm conservation numerically is to implement the scheme
\begin{align*}
  Q(t+h) = \exp ( h \Omega(t)/2) Q(t) = (1 + h\Omega(t)/2 + \dots) Q(t), 
\end{align*}
where, since $\Omega(t)$ is skew-symmetric, $\exp(h\Omega(t)/2)$ is norm-preserving. With the decomposition $\Omega = A + B$ with $A$ and $B$ commuting, we find that 
\[ \exp(h\Omega/2) = \exp(hA/2) \exp(hB/2).  \]
On to computing each exponential, we find that $A^2 = - |\ba|^2 I$ (with $|\ba|^2 = a_1^2 + a_2^2 + a_3^2$), and similarly, $B^2 = - |\bb|^2 I$. This implies, for every natural $p$,
\begin{align*}
  A^{2p} = (-1)^p |\ba|^{2p} I, \qquad A^{2p+1} = (-1)^p |\ba|^{2p} A,
\end{align*}
similarly for $B$. Using this identity in the series of the exponential, we arrive at the final scheme
\begin{align}
    \begin{split}
	Q(t+h) &= \exp(hA/2) \exp(hB/2) Q(t), \qquad \text{where} \\
	\exp \left( \frac{hA}{2} \right) &= \cos \left( \frac{h|\ba|}{2} \right) I + \sin \left( \frac{h|\ba|}{2} \right) \frac{A}{|\ba|}, \quad \exp \left( \frac{hB}{2} \right) = \cos \left( \frac{h|\bb|}{2} \right) I + \sin \left( \frac{h|\bb|}{2} \right) \frac{B}{|\bb|}. 	
    \end{split}
    \label{eq:scheme}    
\end{align}

\subsubsection{Algorithm summary}

To evolve $q$ along the curve \eqref{eq:ODEq2}, we do the following at 
each time-step: 
\begin{itemize}
  \item compute $A$ via the formula \eqref{eq:a} and $B$ via \eqref{eq:ODEq}. 
  \item evolve $q$ according to \eqref{eq:scheme}.
\end{itemize}

Once $q$ is reconstructed along a family of curves covering the computational domain, one may reconstruct $\sigma$ via \eqref{eq:F}, i.e.,
\begin{align*}
  \nabla \log \sigma = \frac{1}{3} \nabla\log \det H + \frac{4}{3} (T_{\qbar} V_{ij}^s)_i T_q \be_j,
\end{align*} 
or taking the dot product with $\be_m$ and using that $\dprod{T_q \be_j}{\be_m} = \dprod{\be_j}{T_{\qbar} \be_m} = (T_{\qbar} \be_m)_j$ :
\begin{align}
  \partial_m  \log \left(\frac{\sigma}{(\det H)^\frac{1}{3}}\right) = \frac{4}{3} (T_{\qbar} V_{ij}^s)_i (T_{\qbar} \be_m)_j, \qquad m=1,2,3.
  \label{eq:lasteq}
\end{align} 
Since the right-hand sides of theses equations are completely known at this point, one may avoid using ODEs (whose outcome would depend on the choice of direction of propagation) by just solving an elliptic PDE. Numerically, what we do is compute
\begin{align*}
    \sigma = (\det H)^{1/3} e^v,
\end{align*}
where $v$ is the unique solution to the Poisson problem
\begin{equation}
    \Delta v = \frac{4}{3} \partial_m ((T_{\qbar} V_{ij}^s)_i (T_{\qbar}
    \be_m)_j) \qquad (\text{in }X), \qquad v|_{\partial X}  = \log \left( \frac{\sigma}{(\det H)^{1/3}} \right)|_{\partial X}.
    \label{eq:scalar_poisson}
\end{equation}

\begin{remark}[On the stability of the approach] The stability of reconstructing $q$ via integrating system \eqref{eq:ODEq} will be the same as that of the stability of reconstructing $R$ via integration of \eqref{eq:dmR}, which was previously established in \cite[Prop. 4.3.6]{Monard2012b}. Based on Gronwall's lemma, propagating errors along integration curves, one obtains a pointwise control of $q$ in terms of the $W^{1,\infty}$ Sobolev norm of the functionals $H_{ij}$. In turn, the right-hand side of \eqref{eq:scalar_poisson} is controlled in $H^{-1}$ norm by the $W^{1,\infty}$ norm of the functionals $H_{ij}$ and the reconstructed $v$ (and thus $\sigma$) will be controlled in $H^1$ norm by the $W^{1,\infty}$ norm of the functionals $H_{ij}$.
\end{remark}

\section{Anisotropic reconstruction from $3+2$ solutions and more}\label{sec:algo_aniso}

We now consider the reconstruction problem of a fully anisotropic tensor $\gamma$, which we write as $\gamma = \tilde\gamma \tau$, with $\tilde\gamma$ the anisotropic structure satisfying $\det\tilde\gamma =1$, and $\tau$ the scalar factor. We follow, and adapt to three dimensions (using 3D vector identities rather than exterior algebra), the exposition in the article \cite{Monard2012a} for the reconstruction of $\tilde\gamma$ followed by that of $\tau$. 

Define $\wtA = \tilde\gamma^{\frac{1}{2}}$ the all-positive squareroot of $\tilde\gamma$ and $A$ the all-positive squareroot of $\gamma$. Suppose one starts from measurements $H=\{H_{ij}\}_{1\le i,j\le 3}$ associated with three solutions $(u_1,u_2,u_3)$ whose gradients are linearly independent over an open set $\Omega$, and denote $S_i = A\nabla u_i$ for $1\le i\le 3$ as well as $S:= [S_1|S_2|S_3]$.

\subsection{Preliminaries}

\subsubsection{Reconstruction of $\wtA S$ from additional measurements}
Call $v$ an additional conductivity solution in addition to $(u_1,u_2,u_3)$. By the basis assumption, $A\nabla v$ must decompose along $S_1, S_2, S_3$, via coefficients $\mu_1, \mu_2, \mu_3$, i.e. 
\begin{align}
    A\nabla v + \sum_{i=1}^3 \mu_i S_i = 0.
    \label{eq:lineardependence}
\end{align}
An important observation is that the coefficient $\mu_i$ are {\em known from the power densities} of the set of solutions $(u_1,u_2,u_3,v)$, as may readily be seen from taking the inner product of \eqref{eq:lineardependence} with $A\nabla u_1, A\nabla u_2, A\nabla u_3$. A second crucial observation is the following: 

\begin{lemma}\label{lem:v} 
    Let $u_1,u_2,u_3,v$ as above and $\mu_1,\mu_2,\mu_3$ the coefficients in \eqref{eq:lineardependence}. Upon defining $\bZ := [\nabla \mu_1|\nabla \mu_2|\nabla \mu_3]$, we have the following orthogonality relations
    \begin{align}
	0 &= \bZ:\wtA S, \label{eq:orthocond1} \\
	0 &= \bZ H \Omega_1:\wtA S = \bZ H \Omega_2:\wtA S = \bZ H \Omega_3:\wtA S, \label{eq:orthocond2}
    \end{align}
    where $A:B := \text{tr } (A^T B)$ and 
    \[
	\Omega_i := \be_{i+1}\otimes \be_{i+2} - \be_{i+2}\otimes \be_{i+1}, 
	\quad \text{ for } i = 1,2,3 \text{ with } i+1 , i+2 \text{ defined modulo } 3. 
    \]
\end{lemma}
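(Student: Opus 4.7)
The plan is to derive both families of orthogonality relations from the single equation $\nabla v = -\sum_i \mu_i \nabla u_i$, which is \eqref{eq:lineardependence} after multiplication by $A^{-1}$. Specifically, \eqref{eq:orthocond1} will arise by applying the weighted divergence $\nabla\cdot(\gamma\,\cdot)$, while the three scalar relations in \eqref{eq:orthocond2} will package the three components of the curl identity $\nabla\times\nabla v = 0$, transported into the $\wtA$-twisted geometry.

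For \eqref{eq:orthocond1}, applying $\nabla\cdot(\gamma\,\cdot)$ to $\nabla v = -\sum_i \mu_i \nabla u_i$ and using $\nabla\cdot(\gamma\nabla v) = 0$ together with $\nabla\cdot(\gamma\nabla u_i)=0$, only the Leibniz cross terms survive and yield $\sum_i \gamma\nabla u_i\cdot\nabla\mu_i = 0$. Factoring $\gamma\nabla u_i = AS_i = \tau^{1/2}\wtA S_i$ and dividing by $\tau^{1/2}>0$ gives $\sum_i \wtA S_i\cdot \nabla\mu_i = 0$, i.e.\ exactly $\bZ:\wtA S = 0$.

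For \eqref{eq:orthocond2}, taking the curl of $\nabla v$ kills the $\mu_i\nabla u_i$ contributions (curl of a gradient vanishes) and leaves $\sum_i \nabla\mu_i\times\nabla u_i = 0$ on $\Omega$. To match this with the three relations in \eqref{eq:orthocond2}, I would unfold $\bZ H\Omega_j:\wtA S$ column by column. The structure $\Omega_j\be_j = 0$, $\Omega_j\be_{j+1} = -\be_{j+2}$, $\Omega_j\be_{j+2} = \be_{j+1}$ ensures that only two columns contribute, and using $H_{k\ell} = S_k\cdot S_\ell$ together with the BAC--CAB identity, the expression collapses into
\[ \sum_k \nabla\mu_k\cdot\wtA\bigl(S_k\times(S_{j+2}\times S_{j+1})\bigr). \]
To bring in the curl identity I would transport $\wtA$ across the cross product by the identity $\wtA(p\times q) = (\wtA^{-1}p)\times(\wtA^{-1}q)$, which holds because $\wtA$ is symmetric with $\det\wtA = 1$. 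Combined with $\wtA^{-1}S_k = \tau^{1/2}\nabla u_k$ and the scalar triple product rearrangement $a\cdot(b\times c) = (a\times b)\cdot c$, the sum becomes
\[ \tau^{1/2}\Bigl(\sum_k \nabla\mu_k\times\nabla u_k\Bigr)\cdot \wtA^{-1}(S_{j+2}\times S_{j+1}) = 0. \]

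The main obstacle, in my view, is the conceptual step of recognizing the matrices $H\Omega_j$ as packaging the three scalar components of a cross product in the $\wtA$-twisted geometry, so that \eqref{eq:orthocond2} is really $\nabla\times\nabla v = 0$ in disguise. Once that picture is in place, the remaining manipulations are routine 3D vector algebra powered by the unimodularity $\det\wtA = 1$.
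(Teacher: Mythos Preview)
Your proposal is correct and follows essentially the same route as the paper: \eqref{eq:orthocond1} from the divergence of \eqref{eq:lineardependence} and \eqref{eq:orthocond2} from $\nabla\times\nabla v=0$, with the BAC--CAB identity linking the curl relation to the matrix form. The only cosmetic difference is that you start from $\bZ H\Omega_j:\wtA S$ and reduce it to the curl identity via the unimodular transport $\wtA(p\times q)=\wtA^{-1}p\times\wtA^{-1}q$, whereas the paper starts from the curl identity $\sum_i\nabla\mu_i\times\wtA^{-1}S_i=0$ and contracts it against $\wtA S_p$, $\wtA S_q$ using $\langle\wtA S_p,\wtA^{-1}S_i\rangle=H_{pi}$ directly.
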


\begin{proof} To derive \eqref{eq:orthocond1}, apply the operator $\nabla\cdot( A\cdot )$ to \eqref{eq:lineardependence}, combining with the chain rule and using the conductivity equations, to obtain
    \begin{align*}
	0= \sum_{i=1}^3 \dprod{\nabla\mu_i}{AS_i} = [\nabla\mu_1|\nabla\mu_2|\nabla\mu_3]:AS = [\nabla\mu_1|\nabla\mu_2|\nabla\mu_3]:\wtA S.
    \end{align*}

    To derive \eqref{eq:orthocond2}, we use that a gradient field is curl-free. Since we have $\nabla\times \nabla v = 0$, if we apply $A^{-1}$ to \eqref{eq:lineardependence} followed by the curl operator $\nabla\times$ (and using that $\nabla\times (fV) = \nabla f\times V + f\nabla\times V$ for $f$ a function and $V$ a vector field), we arrive at
    \begin{align}
	\nabla\mu_1 \times \wtA^{-1} S_1 + \nabla \mu_2\times \wtA^{-1} S_2 + \nabla \mu_3\times \wtA^{-1} S_3 = 0. 
	\label{eq:curls}
    \end{align}
    Using the identity
    \begin{align*}
	\dprod{A \times (B\times C)}{D} = \dprod{A}{C} \dprod{B}{D} - \dprod{A}{B} \dprod{C}{D},
    \end{align*}
    we can then derive, for $(p,q)\in \{(1,2), (2,3), (3,1)\}$, 
    \begin{align*}
	0 = \dprod{\wtA S_p\times (\nabla\mu_i \times \wtA^{-1} S_i)}{\wtA S_q} = H_{pi} \dprod{\nabla \mu_i}{\wtA S_q} - H_{qi} \dprod{\nabla \mu_i}{\wtA S_p}.  
    \end{align*}
    If we define $\bC = [C_1|C_2|C_3] := [\nabla \mu_1|\nabla \mu_2| \nabla \mu_3] H = \bZ H$, known from data, then the equations above can be recast as three additional orthogonality conditions 
    \begin{align*}
	[-C_2|C_1|0]: \wtA S = [0|-C_3|C_2]:\wtA S = [-C_3|0|C_1]:\wtA S = 0.
    \end{align*}
    For $i= 1,2,3$, define $\Omega_i := \be_{i+1}\otimes \be_{i+2} - \be_{i+2}\otimes \be_{i+1}$, where $i+1$ and $i+2$ are defined modulo $3$. Then the three conditions above can be recasted as 
    \begin{align*}
	\bZ H \Omega_1:\wtA S = \bZ H \Omega_2:\wtA S = \bZ H \Omega_3:\wtA S = 0.
    \end{align*}    
    Lemma \ref{lem:v} is proved. 
\end{proof}

In short, one additional solution $v$, via its power densities with the initial basis, provides $4$ orthogonality constraints on the matrix $\wtA S$. Let now $v_1, v_2$ be two additional solutions, with matrices $\bZ_1$, $\bZ_2$ as defined in Lemma \ref{lem:v}, and suppose the following hypothesis: 

\begin{hypothesis}[3+2 ($X'$)] \label{hyp:32} Let $X'\subset X$ open. Suppose $(u_1,u_2,u_3,v_1,v_2)$ are five solutions of \eqref{eq:conductivity}, such that, at every point $\x\in X'$ 
    \begin{itemize}
	\item[$(i)$] $\nabla u_1(\x),\nabla u_2(\x),\nabla u_3(\x)$ are linearly independent. 
	\item[$(ii)$] With $\bZ_1, \bZ_2$ defined above, the eight matrices below are linearly independent:
	    \[ \bZ_j(\x),\ \bZ_j(\x) H(\x) \Omega_1,\  \bZ_j(\x) H(\x) \Omega_2,\ \bZ_j(\x) H(\x) \Omega_3, \qquad j=1,2,    \]
    \end{itemize}
\end{hypothesis}

Under Hypothesis \ref{hyp:32}, the additional solutions $v_1,v_2$ generate $8$ non-redundant orthogonality conditions on $\wtA S$, so that the matrix $\wtA S$ is determined up to a scalar factor, which in turn is determined using the normalization condition
\[ \det (\wtA S) = \sqrt{\det H}. \]

\subsubsection{Reconstruction of $\tilde\gamma$ from $\wtA S$}

Once $\wtA S$ is reconstructed, one may reconstruct $\tilde \gamma$ from the
following observation: from the relation $S^T S = H$, we have $Id =
S^{-T} H S^{-1}$, which in turn yields $Id = S H^{-1} S^T$ upon taking
inverses. The following identity then allows us to get $\tilde\gamma$ out of $\wtA S$ and the matrix $H$: 
\begin{align}
  \tilde\gamma = \wtA \wtA^T = \wtA S H^{-1} S^T \wtA^T = \wtA S H^{-1} (\wtA S)^T. 
  \label{eq:gammatilde}
\end{align}

\subsubsection{Subsequent reconstruction of $\tau$}
We now provide equations which will set the stage for the algorithms of the next sections, reconstructing $\tau$ after $\wtA S$ and $\tilde\gamma$ have been reconstructed. They are given by the following: 

\begin{lemma}\label{lem:nablalogtau} Suppose $u_1,u_2,u_3$ have linearly independent gradients over $X'\subset X$, let $H = \{H_{ij}\}_{1\le i,j\le 3}$ their power densities. Denote $H^{pq}:= (H^{-1})_{pq}$, and $\wtH_{pq}$ the cofactor $(p,q)$ of the matrix $H$ (so that $\wtH_{pq} = |H|H^{pq}$). Under knowledge of $\tilde\gamma$, the following equations hold:  
    \begin{align}
	\nabla\log\tau &= \frac{2}{3} |H|^{-\frac{1}{2}} \dprod{\nabla \left( |H|^{\frac{1}{2}} H^{jl} \right)}{\wtA S_l} \wtA^{-1} S_j = \frac{1}{3} \nabla \log |H| + \frac{2}{3} \dprod{\nabla H^{jl}}{\wtA S_l} \wtA^{-1} S_j, 
	\label{eq:nablalogtau} \\
	\nabla\log\tau &= \frac{1}{3} \nabla \log |H| + \frac{2}{3} \tilde\gamma^{-1} \dprod{\nabla H^{jl}}{\wtA S_l} \wtA S_j. \label{eq:nablalogtau2} \\
	|H|\tilde\gamma\nabla\log\tau &= \frac{2}{3} \dprod{\nabla \wtH_{jl} }{\wtA S_l} \wtA S_j - \frac{1}{3} \tilde\gamma \nabla |H|.	\label{eq:nablalogtau3}
    \end{align}        
\end{lemma}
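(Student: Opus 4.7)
The plan is to assemble two families of scalar identities constraining $\nabla\log\tau$ and then combine them via a basis decomposition in $\{\wtA^{-1}S_j\}$ to extract \eqref{eq:nablalogtau}; the remaining two identities follow by algebraic rewriting. The first family comes from the divergence-free identity $\nabla\cdot(\nabla u_p\times\nabla u_q)=0$. Using $\nabla u_j = \tau^{-1/2}\wtA^{-1}S_j$, the cross-product transformation $(Mu)\times(Mv) = (\det M)M^{-T}(u\times v)$ applied to $M=\wtA^{-1}$ (noting $\det\tilde\gamma=1$ and $\wtA^T=\wtA$), and $S_p\times S_q = \sqrt{|H|}\,H^{jl}S_j$ for $(l,p,q)$ a cyclic permutation of $(1,2,3)$, one obtains $\nabla u_p\times\nabla u_q = \tau^{-1}Y^l$, where $Y^l := \sqrt{|H|}\,H^{jl}\wtA S_j$. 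Pulling $\tau^{-1}$ out of the divergence then yields
\begin{align*}
    \dprod{\nabla\log\tau}{Y^l} = \nabla\cdot Y^l, \qquad l=1,2,3.
\end{align*}
The second family comes from the conductivity equations themselves: since $\gamma\nabla u_j = \sqrt{\tau}\,\wtA S_j$, the equation $\nabla\cdot(\gamma\nabla u_j)=0$ expands by Leibniz to
\begin{align*}
    \dprod{\nabla\log\tau}{\wtA S_j} = -2\nabla\cdot(\wtA S_j), \qquad j=1,2,3.
\end{align*}

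To combine these, I would use that $SH^{-1}S^T=I$ on $X'$ (a direct consequence of $H=S^TS$ and invertibility of $S$), which yields the \emph{resolution of identity} $H^{jl}\dprod{v}{\wtA S_l}\wtA^{-1}S_j = v$ for every vector $v$; applied to $v=\nabla\log\tau$, this expresses $\nabla\log\tau$ in the basis $\{\wtA^{-1}S_j\}$ in terms of the scalars $\dprod{\nabla\log\tau}{\wtA S_l}$. The key move is to evaluate the combination $\sqrt{|H|}\,H^{jl}\dprod{\nabla\log\tau}{\wtA S_j}$ in two different ways: the first family gives it directly as $\nabla\cdot Y^l$, while expanding $\nabla\cdot Y^l$ by Leibniz and substituting the second family into the $\nabla\cdot(\wtA S_j)$ piece produces $\dprod{\nabla(\sqrt{|H|}H^{jl})}{\wtA S_j} - \tfrac12\sqrt{|H|}\,H^{jl}\dprod{\nabla\log\tau}{\wtA S_j}$. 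Equating these two expressions and solving for the common quantity produces the factor $\tfrac23$, and reinsertion into the basis expansion (together with a cosmetic $j\leftrightarrow l$ swap using $H^{jl}=H^{lj}$) delivers the first equality of \eqref{eq:nablalogtau}.

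The two remaining identities are pure algebra. For the second equality of \eqref{eq:nablalogtau}, split $\nabla(\sqrt{|H|}H^{jl}) = \tfrac12\sqrt{|H|}H^{jl}\nabla\log|H| + \sqrt{|H|}\nabla H^{jl}$ and apply the resolution of identity once more, now with $v=\nabla\log|H|$, to peel off the free $\tfrac13\nabla\log|H|$ term. Equation \eqref{eq:nablalogtau2} is then an immediate consequence of $\tilde\gamma^{-1}\wtA S_j = \wtA^{-1}S_j$. For \eqref{eq:nablalogtau3}, multiply \eqref{eq:nablalogtau2} by $|H|\tilde\gamma$, rewrite $|H|\nabla H^{jl} = \nabla\wtH_{jl} - H^{jl}\nabla|H|$, and apply the companion identity $H^{jl}\dprod{v}{\wtA S_l}\wtA S_j = \tilde\gamma v$ (also a consequence of $SH^{-1}S^T=I$) with $v=\nabla|H|$ to combine the leftover $\tilde\gamma\nabla|H|$ contributions into the stated $-\tfrac13\tilde\gamma\nabla|H|$. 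The conceptual content lies entirely in the two scalar families above; the main practical obstacle is keeping the index contractions straight so that the constants from the two sources conspire to produce exactly $\tfrac23$ and $\tfrac13$, rather than $1$ and $0$.
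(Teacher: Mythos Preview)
Your proposal is correct and, for \eqref{eq:nablalogtau2}--\eqref{eq:nablalogtau3}, follows essentially the paper's own manipulations (same resolution-of-identity trick $H^{jl}\langle v,\wtA S_l\rangle\wtA^{-1}S_j=v$, same cofactor rewrite, only a cosmetic reordering of when one multiplies by $|H|\tilde\gamma$). The one substantive difference is that for \eqref{eq:nablalogtau} the paper simply cites an earlier reference, whereas you supply a self-contained derivation from the two PDE constraints $\nabla\cdot(\nabla u_p\times\nabla u_q)=0$ and $\nabla\cdot(\gamma\nabla u_j)=0$; this is indeed the argument underlying the cited formula, so your write-up is more complete but not a genuinely different route.
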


\begin{proof} Equation \eqref{eq:nablalogtau} is nothing but \cite[Eq. (7)]{Monard2012a} adapted to three dimensions and \eqref{eq:nablalogtau2} comes immediatedly from using that $\wtA^{-1} = \tilde\gamma^{-1} \wtA$. To derive \eqref{eq:nablalogtau3}, let us modify \eqref{eq:nablalogtau} as follows: 
    \begin{align*}
	\nabla \log \tau &= \frac{2}{3} |H|^{-\frac{1}{2}} \dprod{\nabla \left( |H|^{-\frac{1}{2}} \wtH_{jl} \right) }{\wtA S_l} \wtA^{-1} S_j \\
	&= \frac{2}{3} |H|^{-1} \dprod{\nabla \wtH_{jl} }{\wtA S_l} \wtA^{-1} S_j + \frac{2}{3} |H|^{\frac{1}{2}}  H^{jl} \dprod{\nabla |H|^{-\frac{1}{2}} }{\wtA S_l} \wtA^{-1} S_j \\
	&= \frac{2}{3} |H|^{-1} \dprod{\nabla \wtH_{jl} }{\wtA S_l} \wtA^{-1} S_j + \frac{2}{3} |H|^{\frac{1}{2}} \nabla |H|^{-\frac{1}{2}}  \\
	&= \frac{2}{3} |H|^{-1} \dprod{\nabla \wtH_{jl} }{\wtA S_l} \wtA^{-1} S_j - \frac{1}{3} \nabla \log |H|
    \end{align*}
    Multiplying by $|H|\tilde\gamma$ and using that $\tilde\gamma \wtA^{-1} = \wtA$, we obtain \eqref{eq:nablalogtau3}. 
\end{proof} 

Equations \eqref{eq:nablalogtau} or \eqref{eq:nablalogtau2} are to be used over some set $X'$ where Hypothesis \ref{hyp:32} is satisfied. In particular, when it is satisfied globally over $X$, they will make the basis of the {\bf 3+2 algorithm} presented in Section \ref{sec:algo1}. 

On the other hand, when $\det H$ vanishes and Hypothesis \ref{hyp:32} cannot be satisfied throughout $X$, such equations become singular on the zero set of $\det H = |H|$, since then the terms $H^{jl}$, containing negative powers of $|H|$, become singular. Then one may use \eqref{eq:nablalogtau3} instead, as the latter equation becomes zero at those points where $\det H$ may vanish, but remains bounded otherwise. Combining such equations associated with more than one basis of solutions will be the basis of the {\bf stabilized 3+2 algorithm}, presented Section \ref{sec:algo2}, allowing for a global reconstruction of $\gamma$ even when Hypothesis \ref{hyp:32} cannot be satisfied throughout $X$.

\subsection{The $3+2$ algorithm} \label{sec:algo1}

Based on the considerations above, we first formulate a so-called $3+2$ reconstruction algorithm. Here and below, $(u_1,u_2,u_3,v_1,v_3)$ are assumed to satisfy Hypothesis \ref{hyp:32} globally over $X$.

\begin{description}
    \item[A. Reconstruction of $\wtA S$.]
	\begin{enumerate}
	    \item Compute the power densities of ($v_1,v_2$) with the initial basis:
		\[ H_{41}, H_{42}, H_{43}, H_{51}, H_{52}, H_{53}. \]
	    \item Out of these power densities and the matrix $H = \{H_{ij}\}_{1\le i,j\le 3}$, compute the coefficients $\mu_1^{(1)}, \mu_2^{(1)}, \mu_3^{(1)}$ and $\mu_1^{(2)}, \mu_2^{(2)}, \mu_3^{(2)}$, solutions of the systems: 
		\begin{align}
		    H \left[
			\begin{array}{c}
			    \mu_1^{(1)} \\ \mu_2^{(1)} \\ \mu_3^{(1)}
			\end{array}
		    \right] = -\left[
			\begin{array}{c}
			    H_{41} \\ H_{42} \\ H_{43} 
			\end{array}
		    \right], \qquad 
		    H \left[
			\begin{array}{c}
			    \mu_1^{(2)} \\ \mu_2^{(2)} \\ \mu_3^{(2)}
			\end{array}
		    \right] = -\left[
			\begin{array}{c}
			    H_{51} \\ H_{52} \\ H_{53} 
			\end{array}
		    \right].
		    \label{eq:mucoeffs}
		\end{align}
	    \item Compute the eight matrices 
	      \begin{align}
		\bZ_j = [\nabla \mu_1^{(j)}|\nabla \mu_2^{(j)}|\nabla \mu_3^{(j)}],\  \bZ_j H \Omega_1,\  \bZ_j H \Omega_2,\ \bZ_j H \Omega_3, \qquad j=1,2.		
		\label{eq:eightmat}
	      \end{align}
	      \item Compute a matrix which is perpendicular to the eight matrices above, call it $B$, and normalize it as
		\[ B \leftarrow \left(\frac{\sqrt{\det H}}{\det B} \right)^{\frac{1}{3}} B, \]
		where we extend the definition $x^{\frac{1}{3}} = - |x|^{\frac{1}{3}}$
            if $x$ is negative, so that $\det B = \sqrt{\det H}$ and $B$ should
            be an approximation of $\wtA S$. 
            
            To compute $B$, one may use a standard numerical algorithm
            such as the singular value decomposition (SVD) for the 
            9 $\times$ 8 matrix whose 8 columns are the vectorization of
            the matrices \eqref{eq:eightmat}.\footnote{For example, in Matlab
            this is done by the commands {\tt [U,s,V] = svd(A)},
            if columns of {\tt A} are vectorization of the matrices
            \eqref{eq:eightmat}.
            Then $B$ can be set as the last column of {\tt U}, 
            i.e., {\tt U(:,9)}.}
	\end{enumerate}

      \item[B. Reconstruction of $\tilde\gamma$ from $\wtA S$.] With $B$ as above, equation \eqref{eq:gammatilde} suggests that an approximation $G$ of $\tilde\gamma$ be obtained via the pointwise formula $G = B H^{-1} B^T$. 

      \item[C. Reconstruction of $\tau$ from $\wtA S$ and $\tilde\gamma$.] With $B$ and $G$ as above, and denoting $B_i$ the $i$-th column of $B$, equation \eqref{eq:nablalogtau2} suggests that $\tau$ can be reconstructed via the equation: 
	\begin{align}
	  \nabla \log \tau = \frac{1}{3} \nabla \log |H| + \frac{2}{3} \dprod{\nabla H^{jl}}{B_l} G^{-1} B_j. 
        \label{eq:poisson_algo1}
	\end{align}
	As the right-hand-side is completely known and $\log\tau$ is assumed to be known at the boundary, one may take the divergence of the equation above and solve a Poisson equation for $\log\tau$ with known Dirichlet boundary condition. (This is another advantage of this method over an ODE-based approach as in Section \ref{sec:algo_iso}, if more than $3$ solutions are being considered for inversion purposes.)	
\end{description}

\subsection{The stabilized algorithm}\label{sec:algo2}

The $3+2$ algorithm above works only if Hypothesis \ref{hyp:32} is satisfied
throughout $X$. Wherever this fails to be so, the matrices $H$ and $S$ become
singular. While we observe that they tend to do so on sets of codimension $1$, such singularities prevent a successful reconstruction in the vicinity of these regions. A way to cope with this source of instability is to use more than $2$, say $m$, ``3+2'' sets, each of which satisfies Hypothesis \ref{hyp:32} over some $X_k\subset X$ and such that the $X_k$'s cover $X$. 

\begin{hypothesis}[$m$ (3+2)]\label{hyp:m32} Suppose $X_1, \dots, X_m$ is an open cover of $X$ and for each $1\le k\le m$, there exists $(u_1^{(k)}, u_2^{(k)}, u_3^{(k)}, v_1^{(k)}, v_2^{(k)})$ satisfying Hypothesis \ref{hyp:32} over $X_k$.    
\end{hypothesis}

Hypothesis \ref{hyp:m32} allows us to combine $m$ $3+2$ algorithms into a globally stable algorithm. 

\begin{remark} As Section \ref{sec:exp3} shows, some solutions can be used more than once so that the number or solutions required does not necessarily grow linearly with $m$.     
\end{remark}

We now describe how to modify steps {\bf A}, {\bf B}, {\bf C} above so that they are stable even when certain determinants vanish.

\begin{description}
    \item[A'. Stabilized reconstruction of $\wtA S$ up to a constant.] In step {\bf A}, the orthogonality conditions do not change if multiplied by a scalar function, the only exception being that they become vacuous when that function vanishes. This is the tradeoff we pay to avoid instabilities. In order to never divide by small quantities for the sake of stability, we propose the following. In step {\bf A.2}, the $\mu_i^{(j)}$ can be written as $\frac{\tilde{\mu}_i^{(j)}}{|H|}$ and the $\tilde{\mu}_i^{(j)}$'s is the matrix of cofactors of $H$ multiplied by the vector in the right-hand-side. The matrix $\bZ_j$ can be replaced by the matrix $|H|^2 \bZ_j$, whose columns are given by 
    \begin{align}
      \bZ'_j := |H|^2 \bZ_j = [|H|\nabla \tilde{\mu}_1^{(j)} - \tilde{\mu}_1^{(j)} \nabla |H|, |H|\nabla \tilde{\mu}_2^{(j)} - \tilde{\mu}_2^{(j)} \nabla |H|, |H|\nabla \tilde{\mu}_3^{(j)} - \tilde{\mu}_3^{(j)} \nabla |H|]. 
      \label{eq:Zpj}
    \end{align}
    Such a matrix still gives rise to the four matrices which are orthogonal to $\wtA S$, but where we never divide by a small quantity. Therefore, a first change to the algorithm is simply to compute the matrix $\bZ'_j$ defined in \eqref{eq:Zpj} instead of $\bZ_j$, and do everything else as usual. Now let $B'$ a matrix orthogonal to 
    \[ \bZ'_j, \quad \bZ'_j H \Omega_1, \quad \bZ'_j H \Omega_2, \quad \bZ'_j H \Omega_3, \qquad j=1,2,  \]
    which we do not normalize for stability purposes. All we know is that $B'$ is proportional to $\wtA S$ almost everywhere, a relation which we denote $B' = b \wtA S$. 
\item[B'. Stabilized reconstruction of $\tilde\gamma$ up to a constant.] In
    step {\bf B}, the instability may arise from the fact that when $|H|$
        becomes small, computation of $H^{-1}$ becomes unstable. We may then replace $H^{-1}$ by the cofactor matrix $\wtH$. That is, with $B'$ as above, $G' = B' \wtH (B')^T$ should be almost everywhere proportional to $\tilde\gamma$. If we denote this relation $G' = g \tilde\gamma$, then we can determine $g$ in terms of $b$ by taking determinants: 
    \begin{align}
      g^3 = \det G' = (\det B')^2 \det \wtH = (b^3 |H|^{\frac{1}{2}})^2 |H|^2 \quad \implies \quad g = b^2 |H|.
      \label{eq:scalars}
    \end{align}
  \item[C'.] With $B'$ and $G'$ as above, and given the relation \eqref{eq:scalars}, equation \eqref{eq:nablalogtau3} can be written in terms of the matrices $B'$ and $G'$ as follows: 
    \begin{align}
      |H| G' \nabla\log\tau = \frac{2}{3} |H| \dprod{\nabla\wtH_{jl}}{B'_l} B'_j - \frac{1}{3} G' \nabla |H|.
      \label{eq:nablalogtau4}
    \end{align}
    Such an equation recovers $\nabla\log\tau$ almost everywhere and is identically zero otherwise. 
\end{description}

Bearing in mind the modified steps {\bf A'}, {\bf B'}, {\bf C'} associated with a single basis of solutions, we now explain how to combine such algorithms using multiple bases. 

\paragraph{Outline of the stabilized algorithm.}
Assume Hypothesis \ref{hyp:m32} holds for some $m\ge 2$. 
\begin{itemize}
    \item Let $H^{(1)}$, \dots, $H^{(m)}$ be the $3\times 3$ matrices of power densities associated with $m$ bases. Since Hypothesis \ref{hyp:m32} holds, we have that $\sum_{j=1}^m |H^{(j)}|$ is nowhere vanishing (even if either determinant can vanish at times).
    \item Compute two additional solutions. For either basis and with these two additional solutions, run steps {\bf A'} and {\bf B'} described above to obtain $\{B^{'(j)}\}_{j=1}^m$, $\{G^{'(j)}\}_{j=1}^m$, approximations (up to multiplicative constants) of $\{\wtA S^{(j)}\}_{j=1}^m$ and $\tilde \gamma$, respectively.
  \item Summing the equations \eqref{eq:nablalogtau4} obtained for each basis, the matrix 
      \begin{align*}
	  M:= |H^{(1)}|G^{'(1)} + \dots + |H^{(m)}|G^{'(m)}
      \end{align*}
      should be globally invertible by assumption, and $\nabla\log\tau$ and the anisotropic structure $\tilde \gamma$ can be globally recovered from the equations
    \begin{align}
	\begin{split}
	    \nabla\log\tau &= M^{-1} \sum_{k=1}^m\left( \frac{2}{3} |H^{(k)}| \dprod{\nabla\wtH^{(k)}_{jl}}{B^{'(k)}_l} B^{'(k)}_j - \frac{1}{3} G^{'(k)} \nabla |H^{(k)}|\right), \\
	    \tilde\gamma &= (\det M)^{-1/3} M.	    
	\end{split}
	\label{eq:stable_poisson}
    \end{align}
    As usual, since the right-hand-side of the equation for $\log \tau$ is completely known, one may take a divergence and solve a Poisson problem, assuming $\tau$ known at the boundary.
\end{itemize}

\section{Numerical experiments} \label{sec:numerics}

In this section, we will provide numerical illustrations of the algorithms 
presented above. Three experiments will be performed, in which the
conductivity $\gamma$ in \eqref{eq:conductivity} will be different versions of 
a pair of interlocked tori, which we define in Section \ref{sec:tori} below.
We will denote the three conductivities corresponding to each of the examples
by $\gamma_1, \gamma_2$ and $\gamma_3$. The three reconstructions will be of
increasing difficulty and will serve to illustrate the key points of the 
algorithms detailed in Sections \ref{sec:algo_iso} and \ref{sec:algo_aniso}.
The MATLAB implementation of the following experiments are publicly available 
in an online repository \cite{githubrepo}.

\begin{description}
  \item[Exp1 (Section \ref{sec:exp1}):] 
      Reconstruction of a scalar conductivity $\gamma_1$ using 
      power densities associated with three solutions, via solving a 
      dynamical system as described in Section \ref{sec:algo_iso}.
  \item[Exp2 (Section \ref{sec:exp2}):] 
      Reconstruction of an anisotropic perturbation of the identity 
      tensor $\gamma_2$ from $3+2$ solutions, following the approach described in Section \ref{sec:algo1}.
  \item[Exp3 (Section \ref{sec:exp3}):] 
      Reconstruction of an anisotropic conductivity tensor $\gamma_3$
      that is similar to $\gamma_2$ in form, but whose anisotropic perturbation
      has high enough amplitude so that Hypothesis \ref{hyp:32} is violated.
      We will perform the reconstruction using more than $3+2$ solutions, 
      following the approach described in Section \ref{sec:algo2}.      
\end{description}

\subsection{Interlocked tori} \label{sec:tori}

In this section, we define the conductivities $\gamma_1,\gamma_2$ and
$\gamma_3$ to be used for numerical experiments below.
They will be the identity $Id$ plus a tensor whose level set will be two 
interlocked tori like those shown on the right plot in 
Fig.\ref{fig:gamma1}. Our experiments we will be conducted in the
cubic domain, $X = (-1,1)^3$. 

\begin{figure}
    \centering
    \begin{tabular}{ccc}
     \begin{minipage}{0.2\textwidth}
         \vspace*{0.4cm}
	\includegraphics[width=0.9\textwidth]{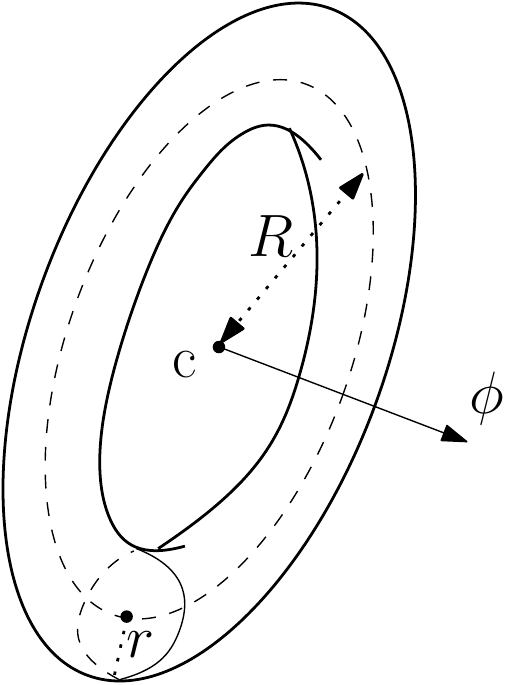}
     \end{minipage}
	&
     \begin{minipage}{0.4\textwidth}
    \includegraphics[width=1.0\textwidth]{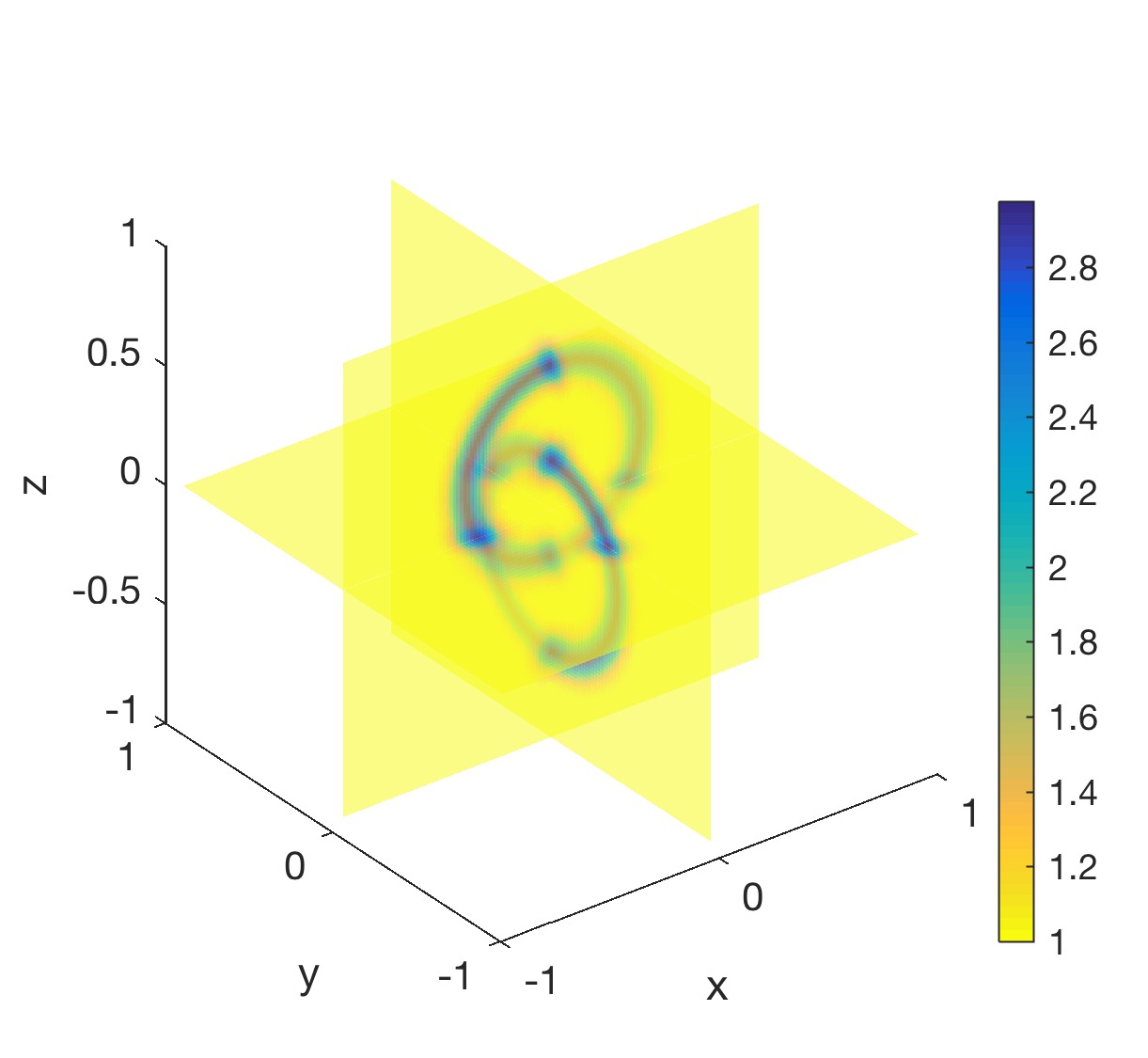}
     \end{minipage}
    &
     \begin{minipage}{0.4\textwidth}
    \includegraphics[width=1.0\textwidth]{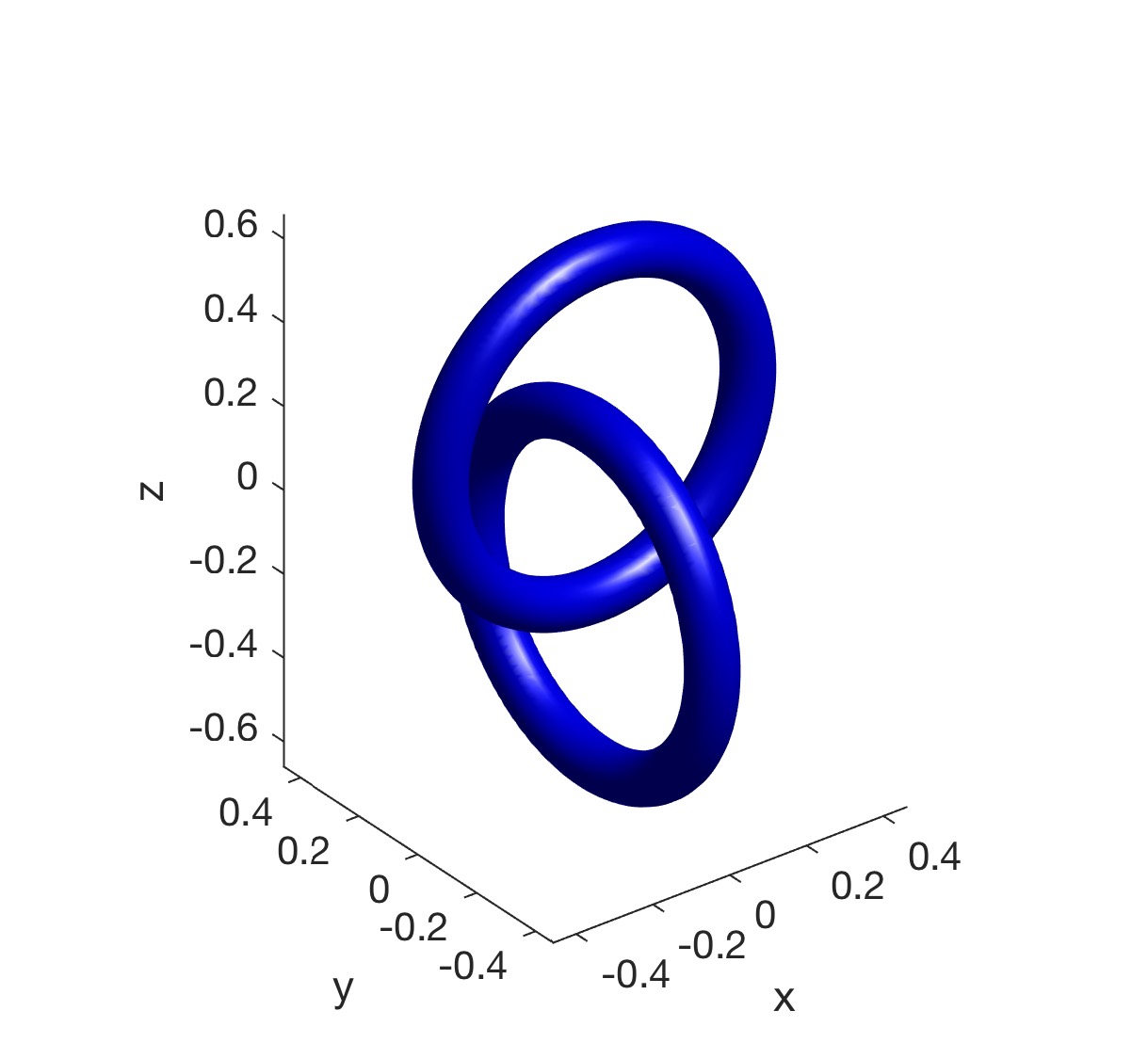}
     \end{minipage}
    \end{tabular}
    \caption{Parametrization of a torus (left),
    3D slice plot of scalar interlocked tori $\gamma_1$
    \eqref{eq:gamma1} (middle), and its isosurface $\{\x: \gamma_1(\x) = 1.8\}$ (right)}
    \label{fig:gamma1}
\end{figure}

Let us describe a torus $T$ by the quadruple $T=(\rc,\phi,R,r)$, see 
Fig.\ref{fig:gamma1} (left). The torus
will be centered at $\rc \in \mathbb{R}^3$, oriented by a unit vector 
$\phi \in \mathbb{R}^3$ orthogonal to 
the plane containing the generating circle, and have a radii of $r$ and $R$
for the small and great circles, respectively. 
Given a torus $T$, let $\chi_T$ be a smooth Gaussian kernel as a function
of the distance to the generating circle. That is, $\chi_T$ will 
have intensity one on the circle and will decay exponentially with respect to 
the distance to that circle. 
\begin{equation}
  \chi_T(\x) := \exp\left( -\frac{|\x-P(\x)|^2}{2r^2} \right), 
\end{equation}
where $P(\x)$ maps $\x$ to the its closest point on the generating
circle of $T$,
\begin{equation}
    P(\x) := \rc +  
    R\, \frac{\x-\rc - \dprod{\x-\rc}{\phi}\phi}{|\x-\rc - \dprod{\x-\rc}{\phi} \phi|}.
\end{equation}
Then we define $\gamma_T$ to be a smooth symmetric tensor of 
rank one, whose range belongs in the tangents of the generating circle of $T$,
whose amplitude will be adjusted by $\chi_T$,
\begin{align*}
  \gamma_T (\x) &:= 
    \chi_T (\x) (\widehat{\x_\rc}\times \phi)\otimes (\widehat{\x}_\rc\times \phi), 
    \quad \text{where} \quad
    \widehat{\x}_\rc := \frac{\x-\rc}{|\x-\rc|}. 
\end{align*}
For each of our experiments, we will use two interlocked tori, one centered in
the upper-half cube $\{z > 0\}$ (denoted by $T_u$), 
another centered in the lower-half cube $\{z < 0\}$ (denoted by $T_d$). 
But we will vary the precise centers and radii. 

Let the tori $T_{u1}$, $T_{d1}$, $T_{u2}$ and $T_{d2}$ given by
\begin{align*}
    T_{u1} &= (\rc_1, \phi_1, R_1, r_1) 
    \quad \text{ and } \quad
    T_{d1} = (-\rc_1, \phi_2, R_1, r_1) 
    \quad \text{ where } \quad
    \left\{\begin{aligned}
            \rc_1 &= (0,0,0.2),\\
            \phi_1 &= (1,0,0),  \\
            \phi_2 &= (0,1,0),  \\
            R_1 & = 0.4,\\
            r_1 & = 0.1,\\
        \end{aligned}\right. \\
    T_{u2} &= (\rc_2, \phi_1, R_2, r_2) 
    \quad \text{ and } \quad
    T_{d2} = (-\rc_2, \phi_2, R_2, r_2) 
    \quad \text{ where } \quad
    \left\{\begin{aligned}
            \rc_2 &= (0,0,0.5),\\
            R_2 & = 0.8,\\
            r_2 & = 0.1,\\
        \end{aligned}\right.
\end{align*}
We then define the conductivities $\gamma_1,\gamma_2$ and $\gamma_3$ as
follows,
\begin{align}
    \gamma_1 &= 1 + k_1 \chi_{T_{u1}} +  k_1 \chi_{T_{d1} }
    \qquad \text{ with } k_1 = 2,  \label{eq:gamma1} \\
    \gamma_2 &= Id + k_2 \gamma_{T_{u2}} + k_2 \gamma_{T_{d2}}
    \qquad \text{ with } k_2 = 2,  \label{eq:gamma2}\\
    \gamma_3 &= Id + k_3 \gamma_{T_{u2}} + k_3 \gamma_{T_{d2}}
    \qquad \text{ with } k_3 = 20. \label{eq:gamma3}
\end{align}
Note that $\gamma_1$ is a scalar corresponding to an isotropic case,
whereas $\gamma_2$ and $\gamma_3$ are anisotropic. 
We plot the 3D slice plot of the scalar tori $\gamma_1$ along with 
an isosurface in Fig.\ref{fig:gamma1}. The 3D slice plot of 
$\det \gamma$ as well as the representation of the anisotropy for
each of the slices for $\gamma_1$ and $\gamma_2$ are given on 
Fig.\ref{fig:aniso_tori_slices} and Fig.\ref{fig:aniso_tori2_slices}, 
respectively. 
For this representation, we use the code developed in \cite{Bergmann2016},
where 3D ellipsoids are used to represent symmetric positive definite matrices; 
the principal axes correspond to the eigenvectors, and the widths along them 
correspond to the eigenvalues.

\begin{figure}
    \centering
    \begin{tabular}{cccc}
    \includegraphics[trim=10 10 10 10, clip, width=0.24\textwidth]{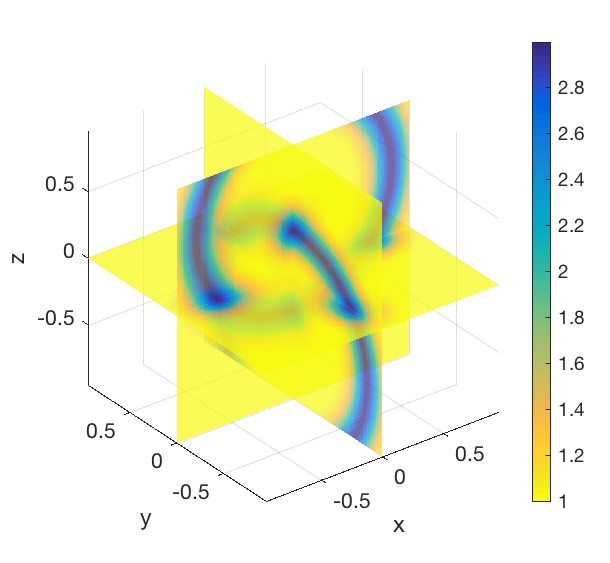}
        &
    \includegraphics[trim=10 10 10 10, clip, width=0.24\textwidth]{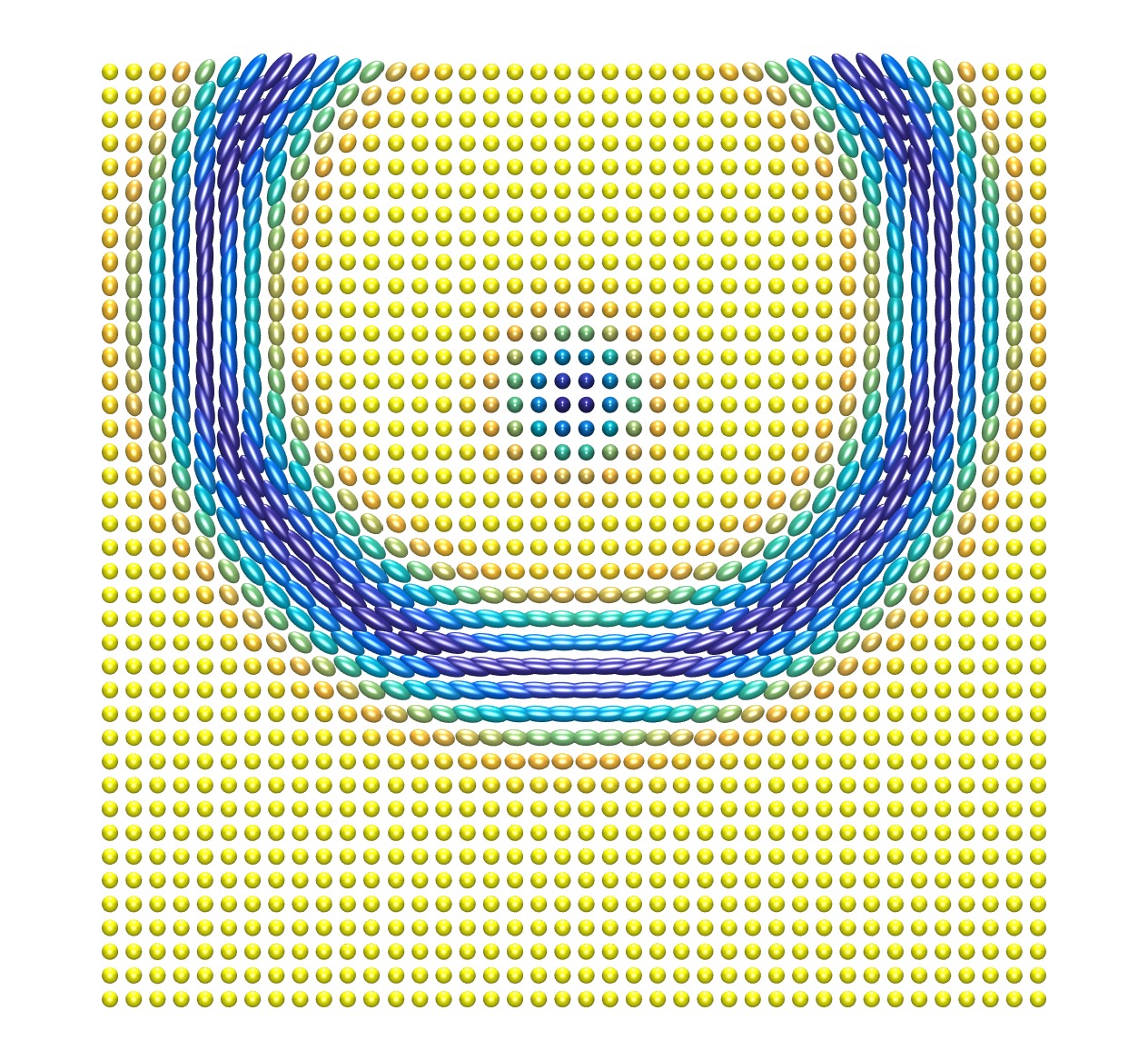}
        &
    \includegraphics[trim=10 10 10 10, clip, width=0.24\textwidth]{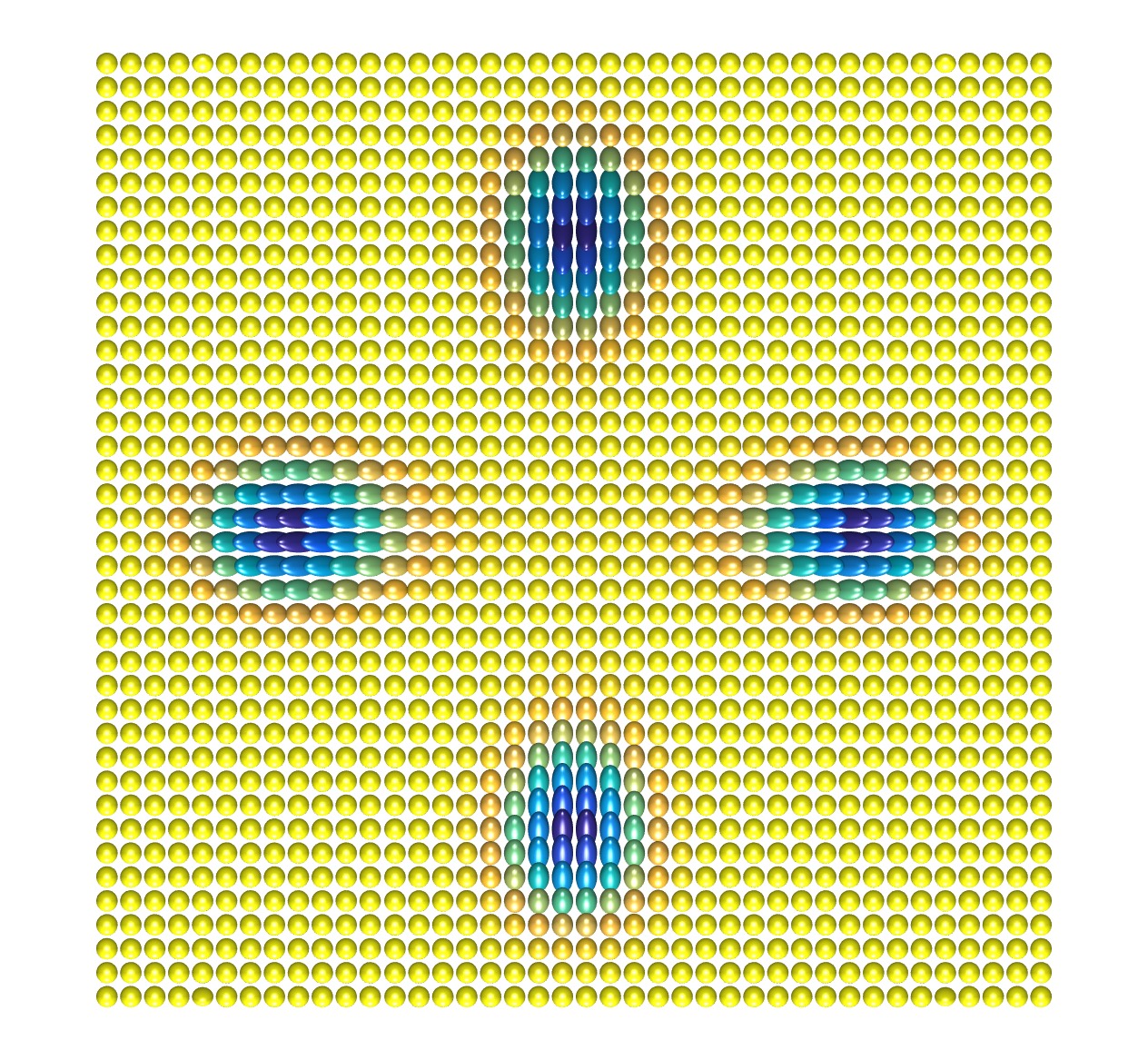}
        &
    \includegraphics[trim=10 10 10 10, clip, width=0.24\textwidth]{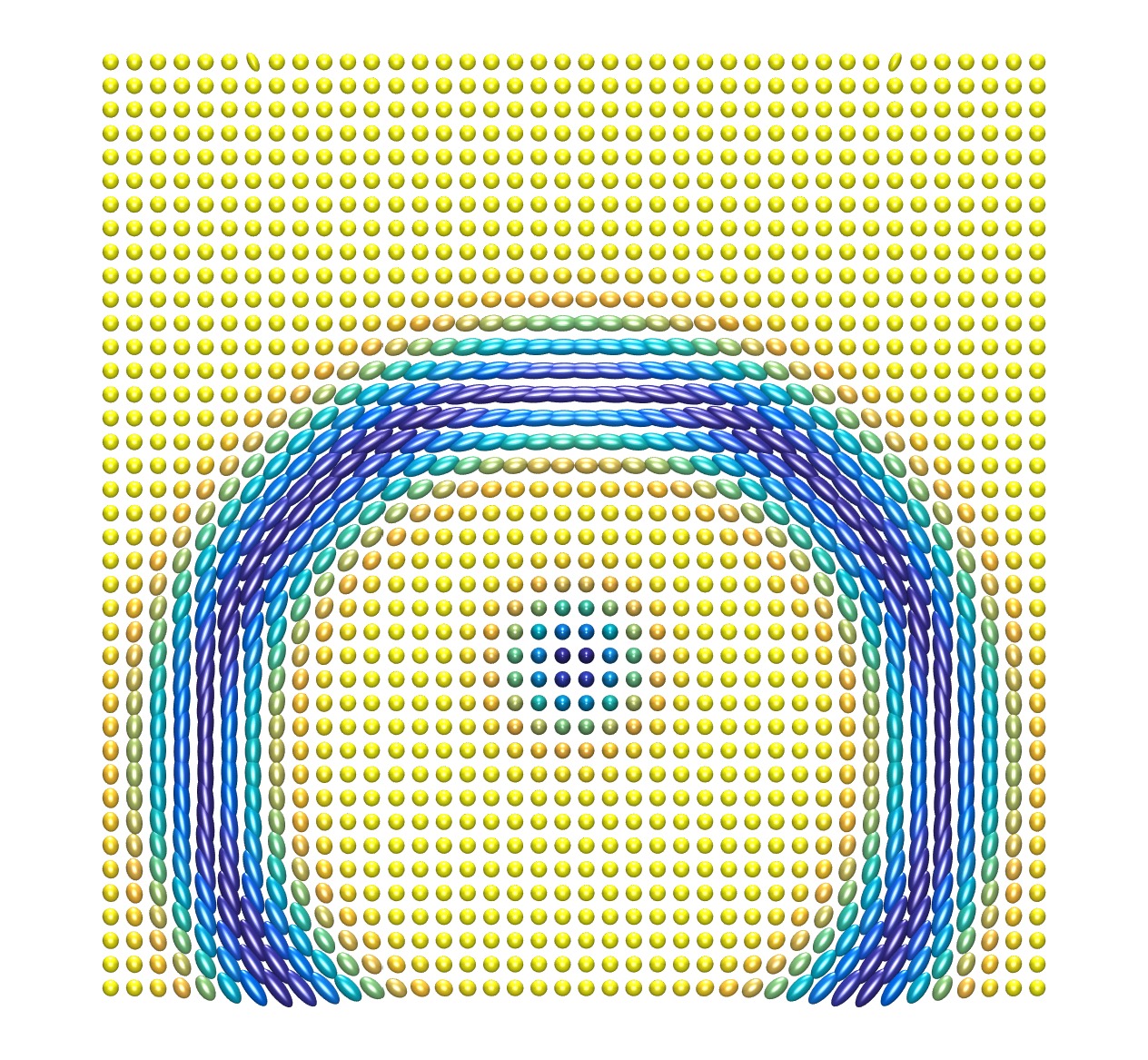}
        \\
    3D slice plot of $\det \gamma_2$ &
    $\gamma_2$ on the $yz$-plane &
    $\gamma_2$ on the $xy$-plane &
    $\gamma_2$ on the $xz$-plane 
    \end{tabular}
    \caption{Anisotropic interlocked tori $\gamma_2$ \eqref{eq:gamma2} 
             of amplitude $k_2=2$. The axes of the ellipsoids represent 
             the anisotropy at each spatial point \cite{Bergmann2016}.}
    \label{fig:aniso_tori_slices}
\end{figure}

\begin{figure}
    \centering
    \begin{tabular}{cccc}
    \includegraphics[width=0.24\textwidth]{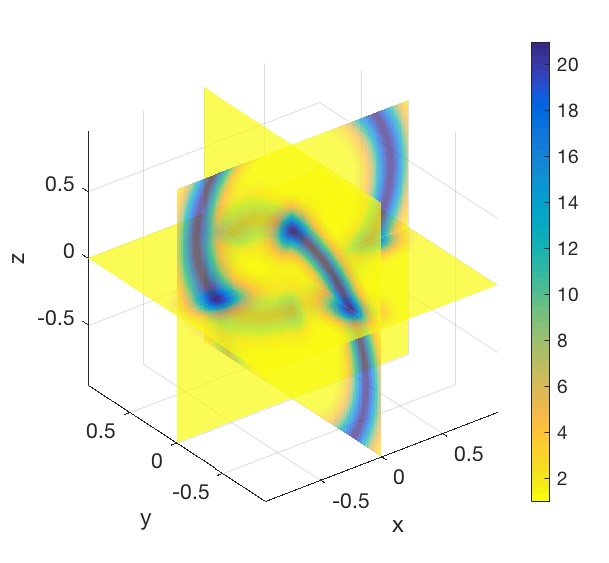}
        &
    \includegraphics[width=0.24\textwidth]{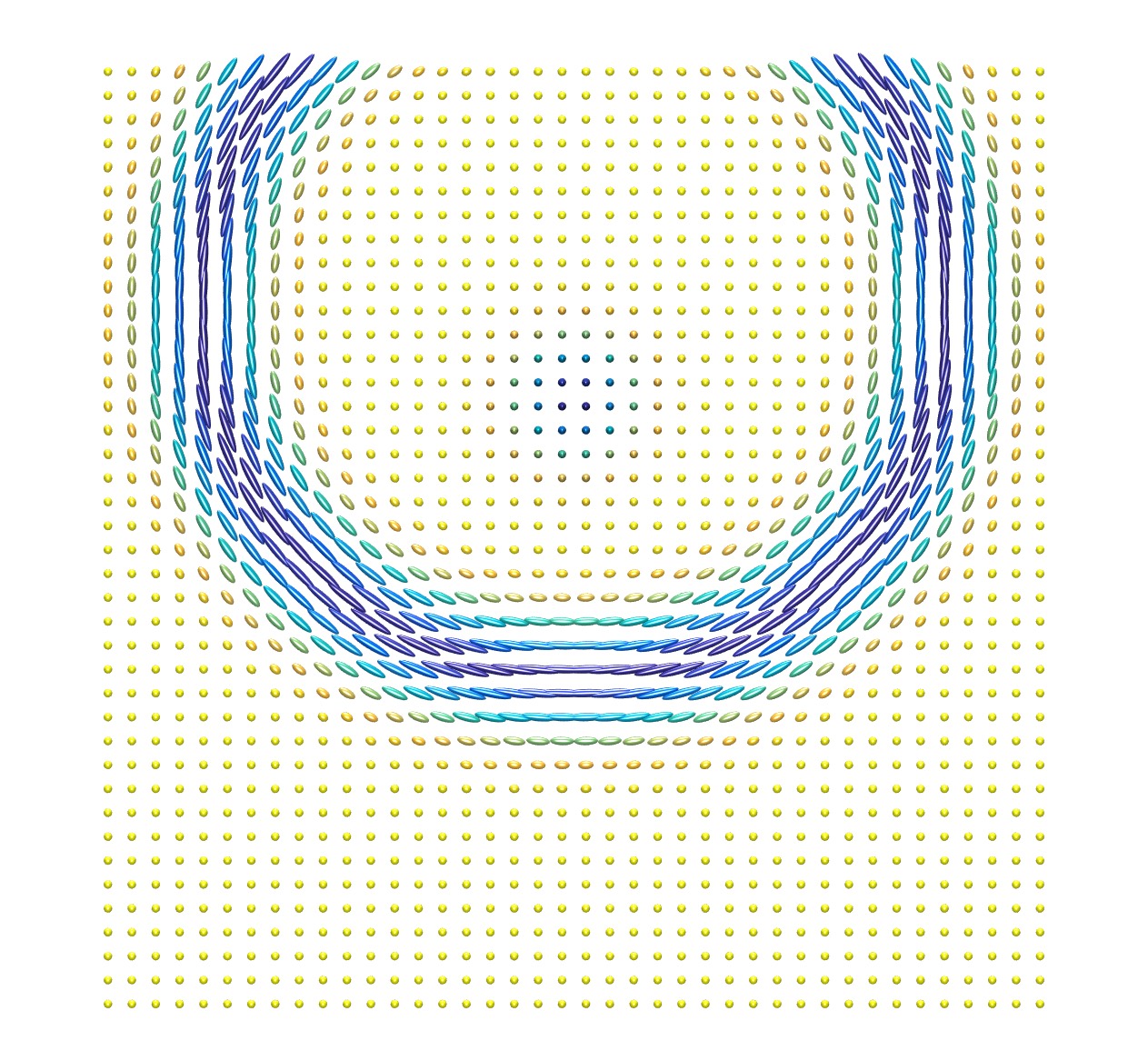}
        &
    \includegraphics[width=0.24\textwidth]{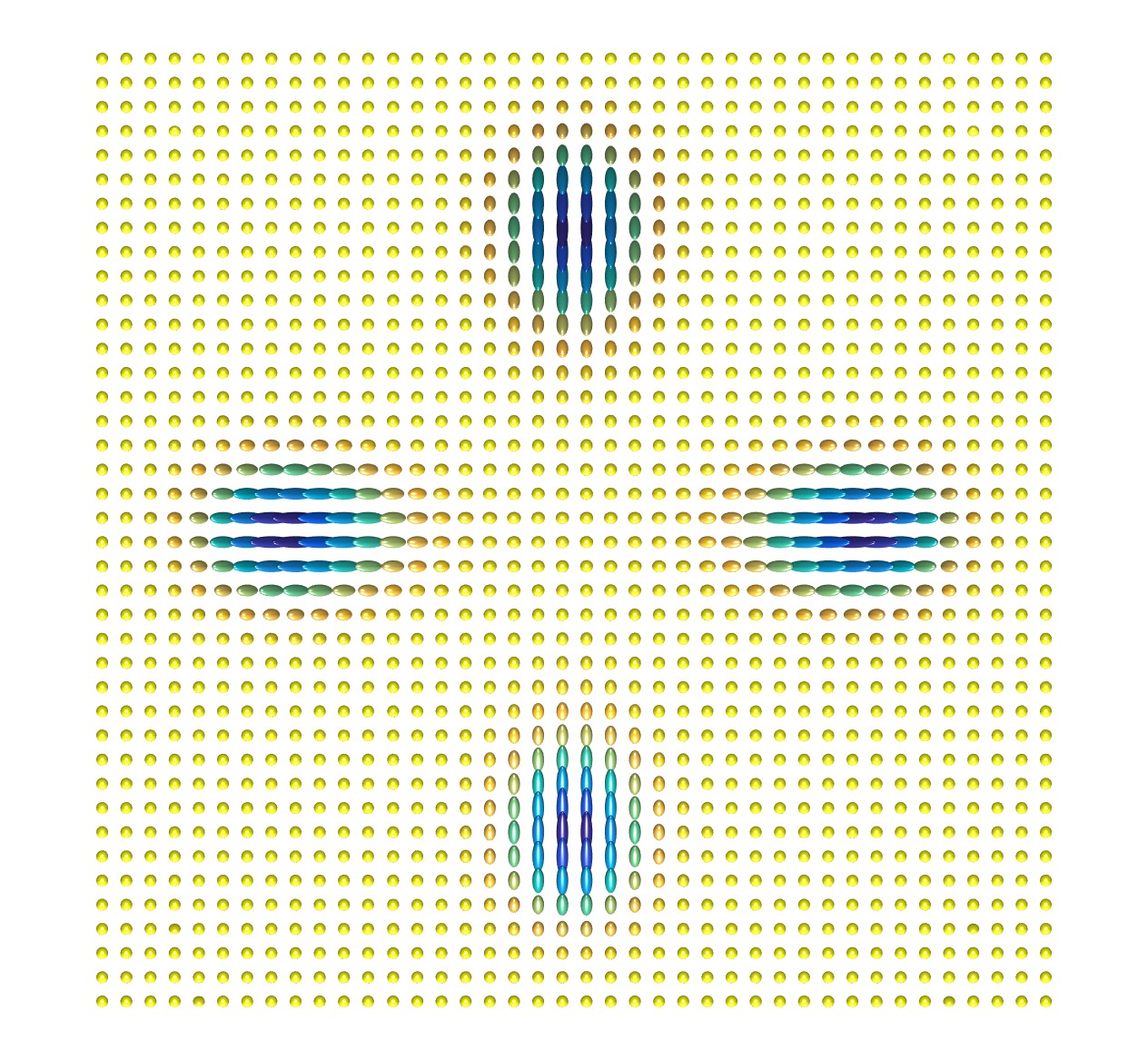} 
        &
    \includegraphics[width=0.24\textwidth]{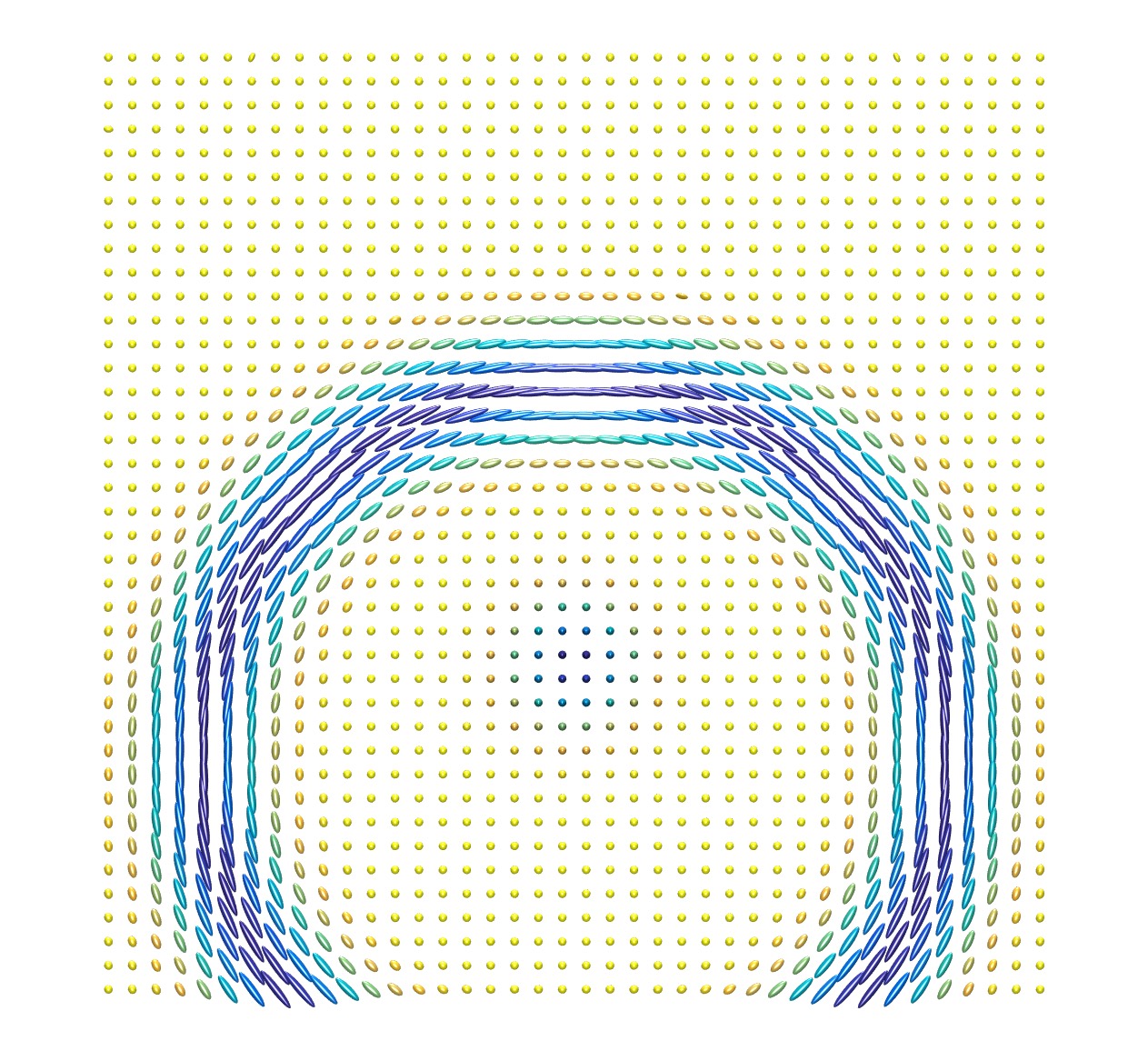} \\
    3D slice plot of $\det \gamma_3$ &
        $\gamma_3$ on the $yz$-plane &
        $\gamma_3$ on the $xy$-plane &
        $\gamma_3$ on the $xz$-plane 
    \end{tabular}
    \caption{Anisotropic interlocked tori $\gamma_3$ \eqref{eq:gamma3} 
    of amplitude $k_3=20$. The axes of the ellipsoids represent the anisotropy
    at each spatial point \cite{Bergmann2016}.}
    \label{fig:aniso_tori2_slices}
\end{figure}

Our choice of conductivities is justified as follows. As is now well-known, 
certain conductivities $\gamma$ are such that
$\gamma$-harmonic extensions of global diffeomorphism of the boundary fail to
make global diffeomorphism of the interior of the domain for dimension three 
(unlike in two). A famous example appears in \cite{Briane2004}, presented in 
the context of periodic structures. This was elaborated upon for a domain with 
boundary in \cite{Bal2012d}. The main idea therein is that for scalar 
interlocked tori with high conductivity  (e.g., $\gamma = 1 + k\chi_{T_1} + 
k\chi_{T_2}$ with $k$ large), conduction of values along highly conductive 
regions creates topological changes in the isosurfaces of the solutions, which
result in regions where three given solutions have linearly 
dependent gradients.  

For our purposes in demonstrating the efficiency of our approach, we exacerbate 
these topological changes even more, by using the anisotropic tensor 
$\gamma_3$ defined in \eqref{eq:gamma3} whose diffusion is especially high 
along the generating circles of the two tori. 

\subsection{Numerical implementation}\label{sec:implement}

The forward problem for the three experiments that appear in Sections
\ref{sec:exp1}, \ref{sec:exp2}, and \ref{sec:exp3} are solved using 
the finite element method \cite{braess07}: the solutions to the conductivity 
problems are computed using piecewise quadratic elements on an 
unstructured tetrahedral mesh with maximal diameter $h_{\textrm{max}} = 0.05$.
The power densities $\{H_{ij}\}$ are computed by first evaluating the finite 
element solutions and their gradients for uniform Cartesian grid-points,
then computing the inner-product point-wise.
The uniform grid of size $128 \times 128 \times 128$ was used.

Once the power densities are generated as above, various reconstruction 
procedures outlined in Sections \ref{sec:algo_iso} and \ref{sec:algo_aniso},
are performed solely on the uniform Cartesian grid. For exampe, these
procedures include the computation of the dynamical system \eqref{eq:ODEq} for 
quaternions,
and the solution of the Poisson problems that follow from 
\eqref{eq:scalar_poisson}, \eqref{eq:poisson_algo1}, and \eqref{eq:stable_poisson} are solved on the uniform grid using the finite difference method \cite{leveque07}. Gradient computations necessary for preparing these systems
are also computed using second-order finite differences.

\subsection{Isotropic reconstructions via dynamical system} \label{sec:exp1}

\paragraph{Experiment 1.}
Here we reconstruct the scalar interlocked tori $\gamma_1$ \eqref{eq:gamma1} 
from the power densities $H_{ij}(\x)$ ($i,j = 1,2,3$) of three solutions
\begin{equation}
    u_1(\x), \qquad u_2(\x), \qquad u_3(\x),
    \label{eq:sols1}
\end{equation}
which satisfy the boundary conditions
\begin{equation}
    u_1(\x) \big\rvert_{\partial X} = x, 
    \qquad 
    u_2(\x) \big\rvert_{\partial X} = y ,
    \qquad 
    u_3(\x) \big\rvert_{\partial X} = z.
    \label{eq:bdry1}
\end{equation}
The determinant of the gradients of the solutions, i.e.
$\det (\nabla u_1, \nabla u_2, \nabla u_3)$,
does not vanish for this set of solutions, as shown in 
Fig.\ref{fig:scalar_detDU}.

\begin{figure}
    \centering
    \includegraphics[width=0.45\textwidth]{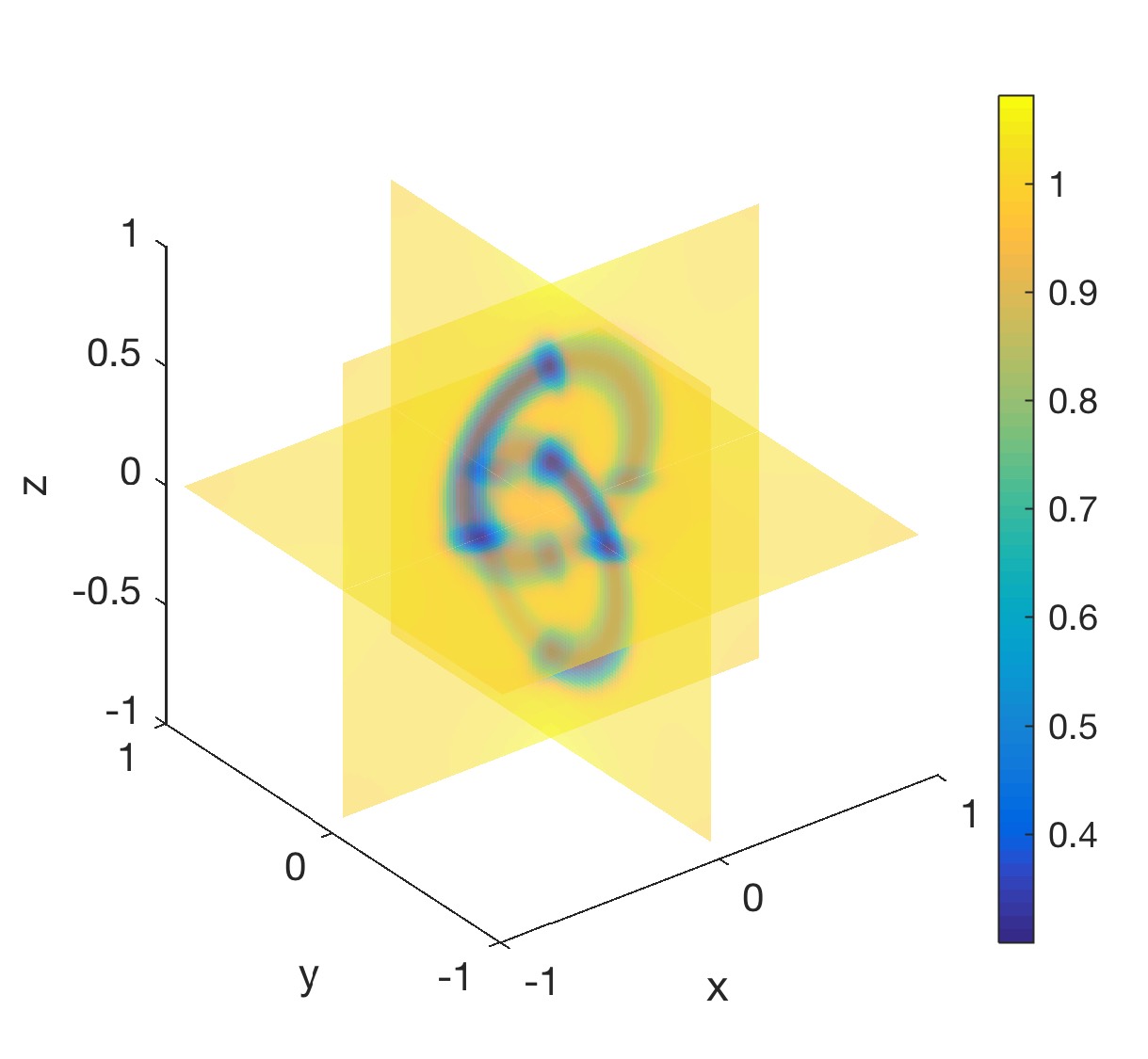}
    \caption{ Exp. 1: $\det( \nabla u_1, \nabla u_2, \nabla u_3)$
    for three solutions (\ref{eq:sols1}, \ref{eq:bdry1}) obtained
    for the smooth scalar conductivity $\gamma_1$ \eqref{eq:gamma1}.
    Computed minimum: {\tt 0.3000}.}
    \label{fig:scalar_detDU}
\end{figure}

We proceed to use the algorithm proposed in Section \ref{sec:algo_iso} to 
reconstruct $\gamma_1$. The dynamical system \eqref{eq:ODEq} for
quaternions is solved along the straight line in the direction
of the $x$-axis, along a Cartesian grid. The computed quaternionic
system allows the reconstruction of the $SO(3)$-valued function arising
from the local gradient system \eqref{eq:dmR}. We denote
this function by $R = (R_1,R_2,R_3)$ where $R_i$ is the 
$i$-th basis vector after applying the rotation matrix to the canonical basis.
The true $R$ and the reconstruction error $R$ are visualized in 
and Fig.\ref{fig:recon_R}, respectively. In the relative error,
one can clearly see the direction-dependence of the reconstruction,
as the errors accumulate as the dynamical system is being integrated
from the boundary at $x=-1$. 

Finally, we reconstruct $\gamma_1$ by solving the Poisson's problem 
\eqref{eq:scalar_poisson}. The reconstruction, along with its corresponding
relative error are plotted in Fig.\ref{fig:recon_sigma1}. The
error is concentrated near the region of high conductivity at the 
generating circle of the two tori. The reconstruction errors are summarized 
in Table \ref{tbl:error_gamma1}. The relative $L^1$ error is at $0.1\%$ and
pointwise relative error is less than $6\%$.

\begin{table}
    \centering
    \begin{tabular}{r|l}
	&   $\gamma_1$  \\
	\hline
	Rel.  $L^1$ error & 0.00127075\\
	Rel. $L^2$ error & 0.00659273\\
	Rel. $L^\infty$ error & 0.05968462\\
	Max. pointwise rel. error & 0.06503202\\
    \end{tabular}
    \caption{Exp. 1: Summary of reconstruction error for $\gamma_1$
    \eqref{eq:gamma1}.}
    \label{tbl:error_gamma1}
\end{table}

\begin{figure}
    \centering
    \begin{tabular}{cc}
    \includegraphics[width=0.35\textwidth]{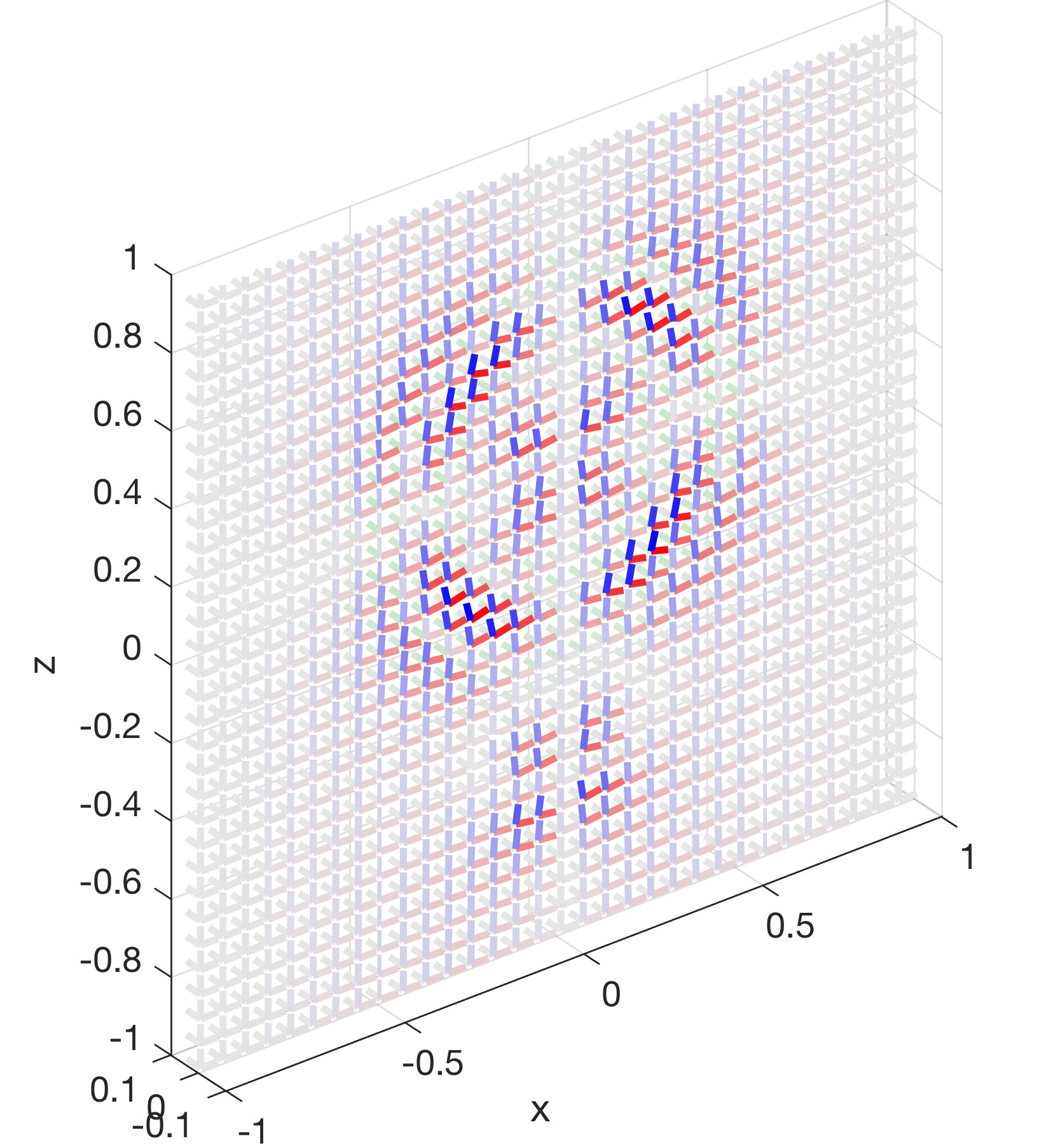}
        &
    \includegraphics[width=0.4\textwidth]{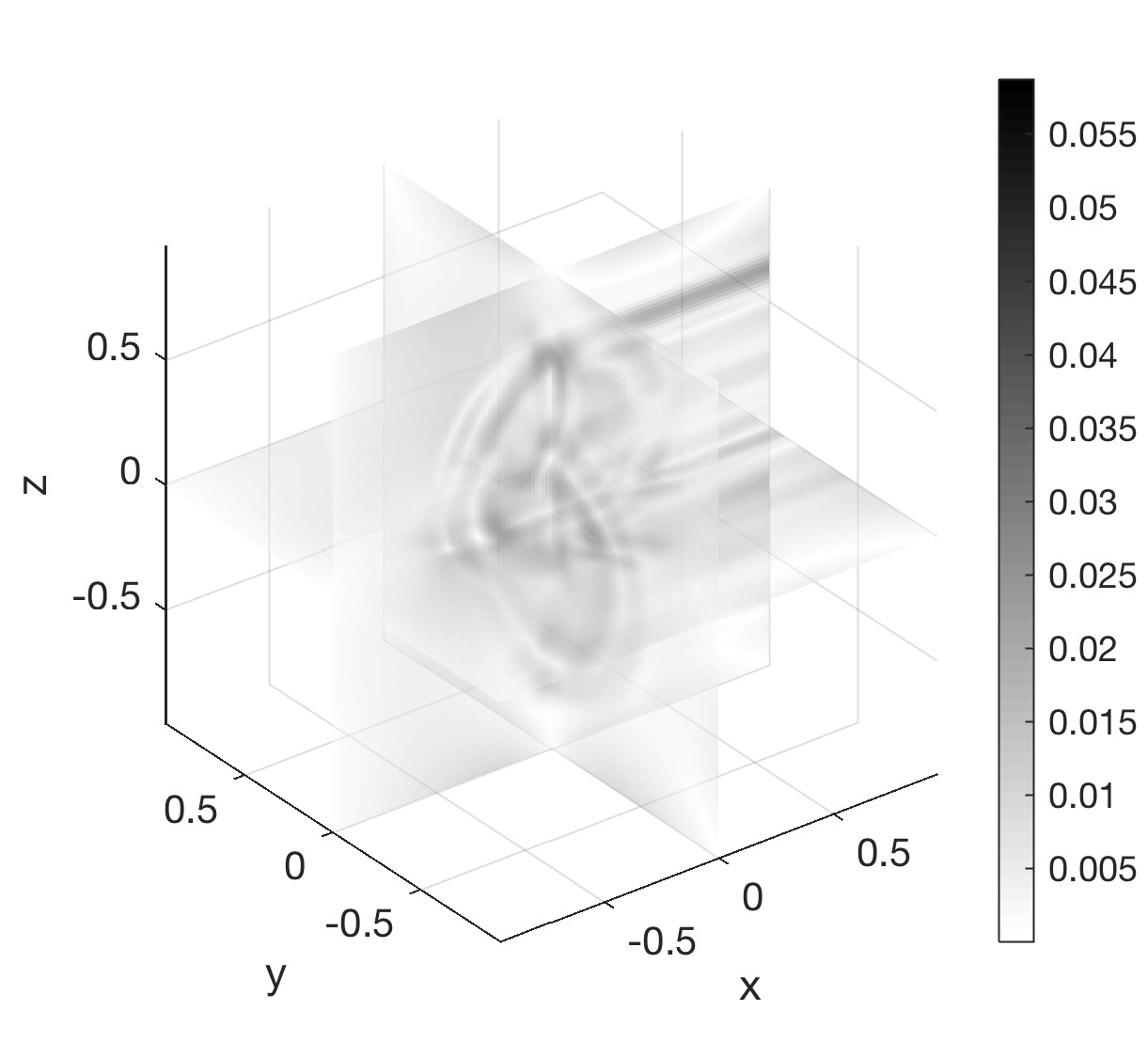}
    \end{tabular}
    \caption{Exp. 1: True $R$, visualized by showing three components of 
    the rotated standard basis $(\be_1,\be_2,\be_3)$ under the action of $R$, 
    color-coded by red, green and blue, respectively (left). 
    The strength of the colors indicate deviation from the identity measured
    in Frobenius norm.
    3D slice plot of the relative reconstruction error, also measured in the
    Frobenius norm (right).}
    \label{fig:recon_R}
\end{figure}

\begin{figure}[h]
    \centering
    \begin{tabular}{ccc}
    \includegraphics[width=0.33\textwidth]{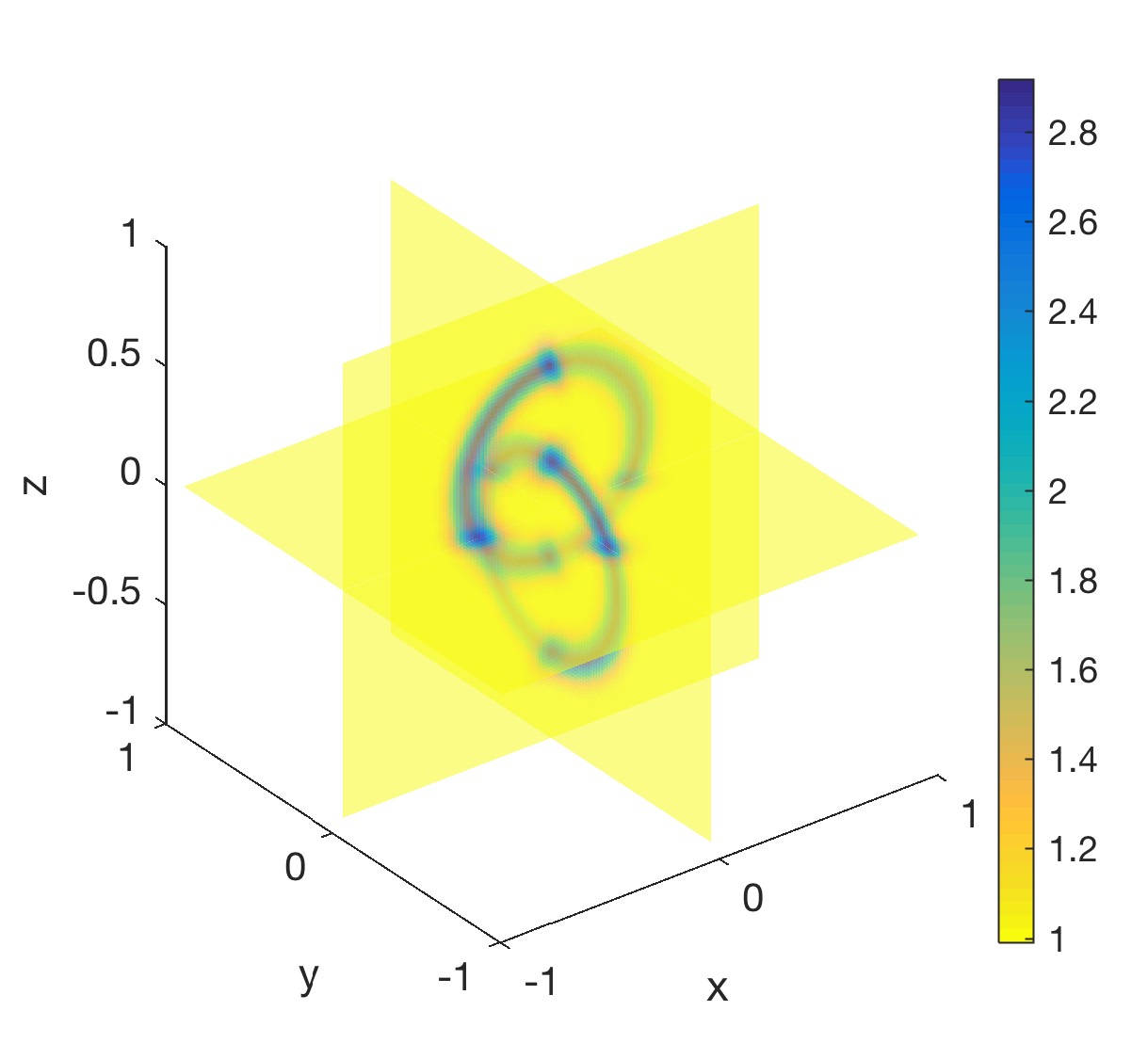}
    &
    \includegraphics[width=0.33\textwidth]{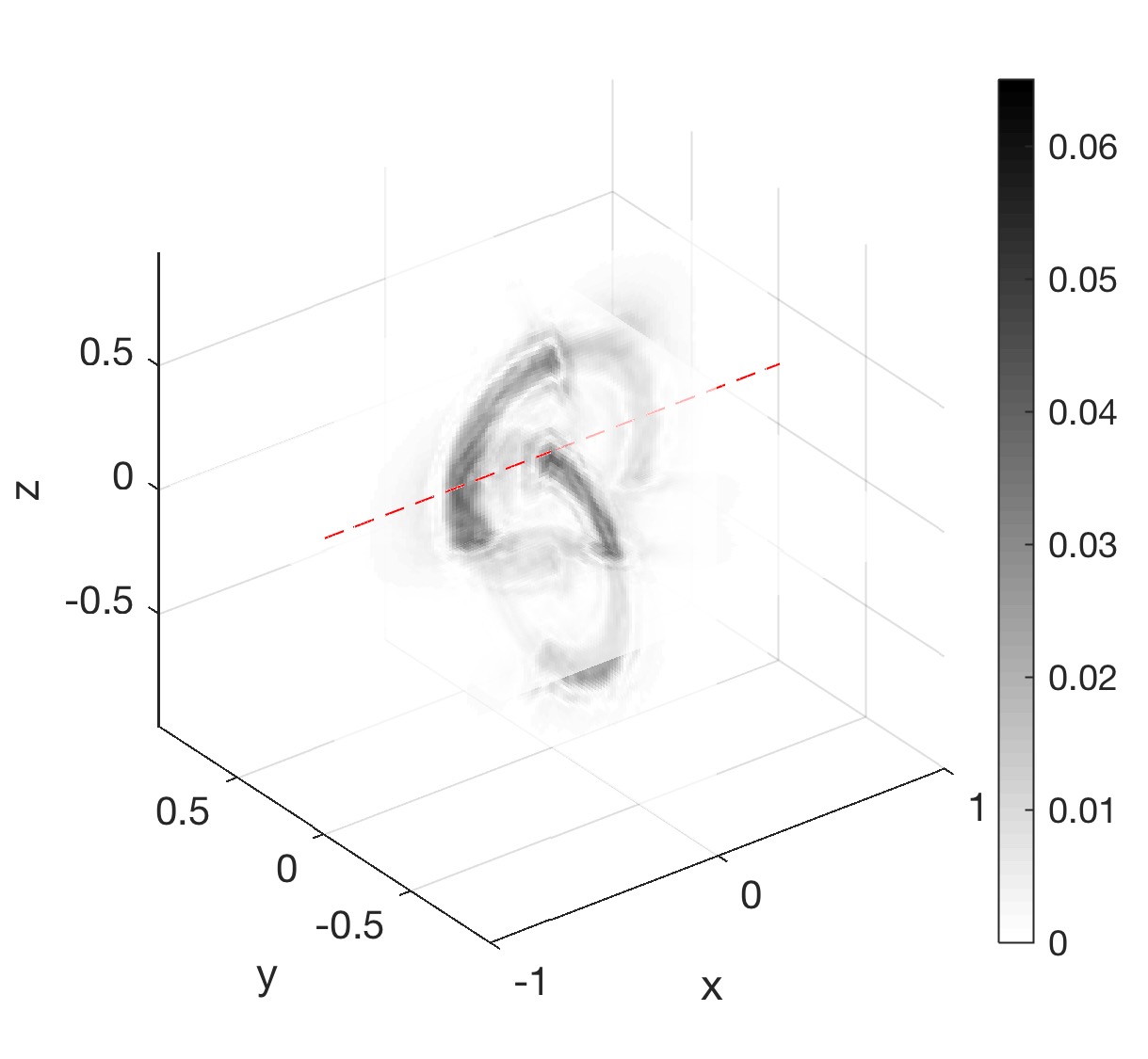}
    &
    \includegraphics[width=0.33\textwidth]{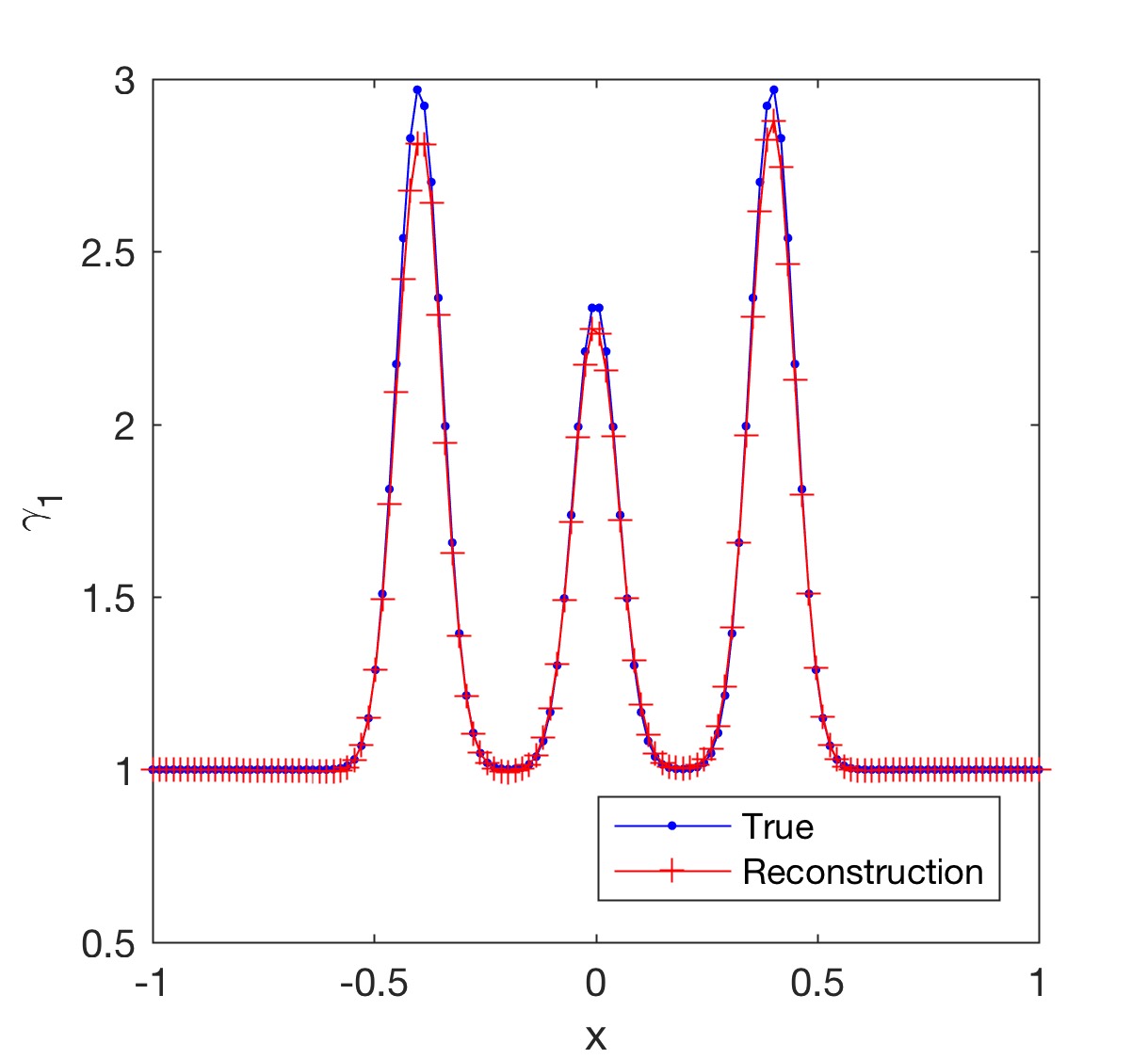}
    \end{tabular}
    \caption{Exp. 1: 3D slice plot of the reconstruction of smooth scalar interlocked tori $\gamma_1$ \eqref{eq:gamma1} (left) (to be compared with Fig.\ref{fig:gamma1}, middle), the relative pointwise error (middle), and 1D comparison plot for $\gamma_1$ and its reconstruction, 
along the line parallel to the $x$-axis through $(y,z) = (0,0.25)$ (right). 
The line is indicated by the red dashed line in the middle plot.}
    \label{fig:recon_sigma1}
\end{figure}

\subsection{Anisotropic reconstructions using $3+2$ solutions}\label{sec:exp2}

\paragraph{Experiment 2.} We first demonstrate the success of the approach in a 
perturbative case where the unknown tensor $\gamma_2$ \eqref{eq:gamma2} 
is close enough to a known $\gamma_0$, for which some background solutions 
$(u_1,u_2,u_3,v_1,v_2)$ are known to fulfill the maximality conditions 
globally, that is, $(\nabla u_1, \nabla u_2, \nabla u_3)$ forms a basis
throughout $X$. By continuity of solutions with respect to the 
conductivity, generating power densities using the traces of the background 
solutions will provide functionals which still satisfy the maximality 
conditions globally, so the proposed algorithm should recover $\gamma_2$
successfully. 

Namely, let $\gamma_0 = Id$ the identity tensor, and let the background
conductivity solutions $(u_1,u_2,u_3,v_1,v_2)$ be given by 
\begin{align*}
    u_1 = x, 
    \quad 
    u_2 = y, 
    \quad 
    u_3 = z, 
    \quad 
    v_1 = (x + 2)(y + 2),
    \quad 
    v_2 = (x + 2)(z + 2).
\end{align*}
For such solutions, we can compute directly that $\det(\nabla u_1, \nabla u_2, \nabla u_3) = 1$ everywhere, and that the eight matrices defined in \eqref{eq:eightmat} are constant and equal to 
\begin{align*}
    \bZ_1 &= \be_1\otimes \be_2 + \be_2\otimes \be_1,\ \bZ_1 H \Omega_1 = \be_1\otimes \be_3,\ \bZ_1 H \Omega_2 = -\be_2\otimes \be_3,\  \bZ_1 H \Omega_3 = \be_2\otimes \be_2- \be_1\otimes \be_1, \\
    \bZ_2 &= \be_1\otimes \be_3+\be_3\otimes \be_1,\ \bZ_2 H \Omega_1 = -\be_1\otimes \be_2,\ \bZ_2 H \Omega_2 = \be_1\otimes \be_1 - \be_3\otimes \be_3,\ \bZ_2 H \Omega_3 = \be_3\otimes \be_2.
\end{align*}
Therefore these matrices span $\{Id\}^\perp\subset M_3(\Rm)$ at every point.
Then by a perturbation argument, for $\gamma_2$ close enough to $Id$, 
the solutions of the problem $\nabla \cdot (\gamma_2 \nabla u) = 0$ 
with $u|_{\partial X}$ sucessively equal to the traces of 
$(u_1,u_2,u_3,v_1,v_2)$, still satisfy the conditions of Hypothesis \ref{hyp:32} 
globally, setting the stage for a sucessful reconstruction. 

\begin{figure}
    \centering
    \begin{tabular}{cc}
    \includegraphics[width=0.4\textwidth]{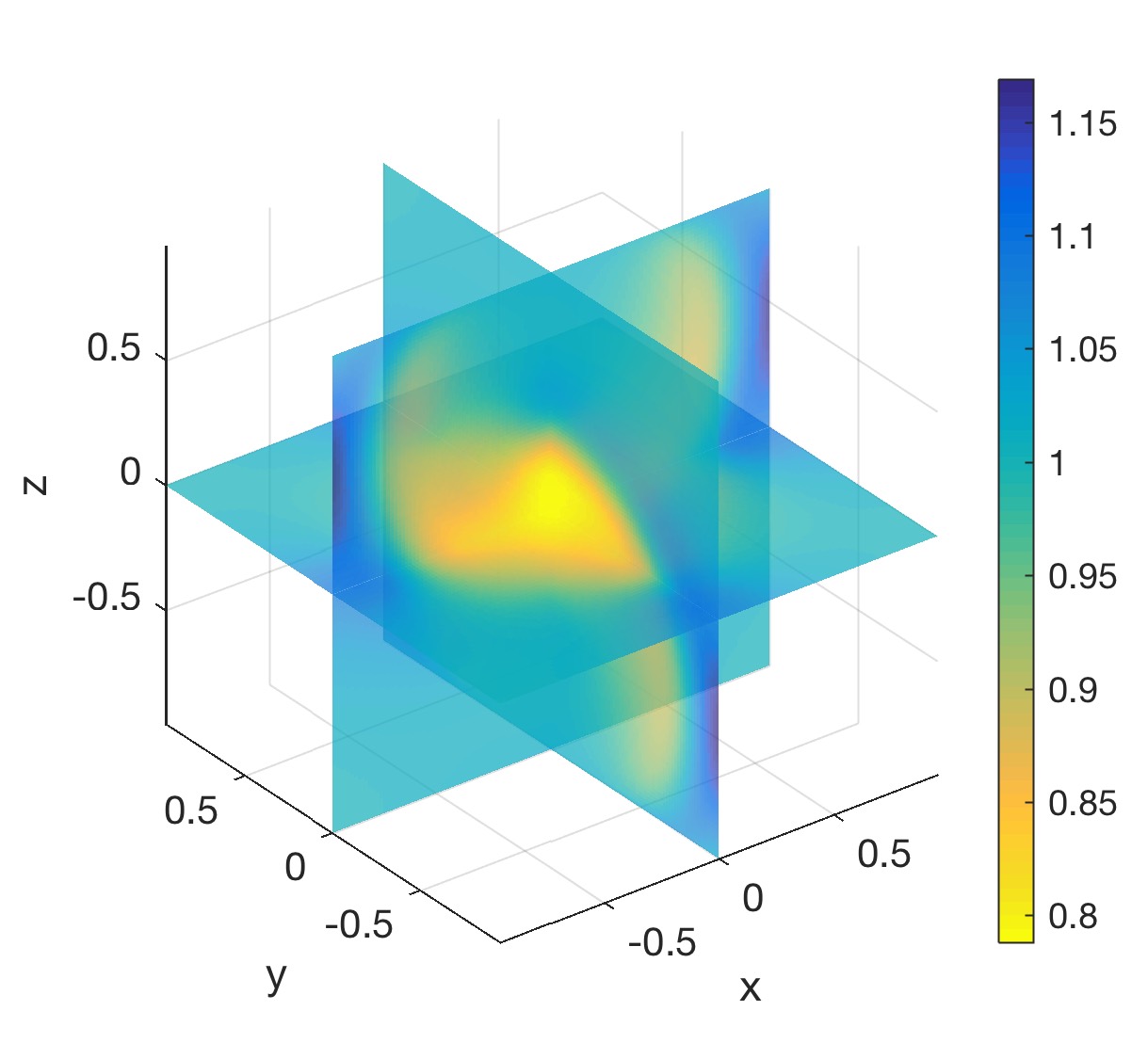}
        &
    \includegraphics[width=0.4\textwidth]{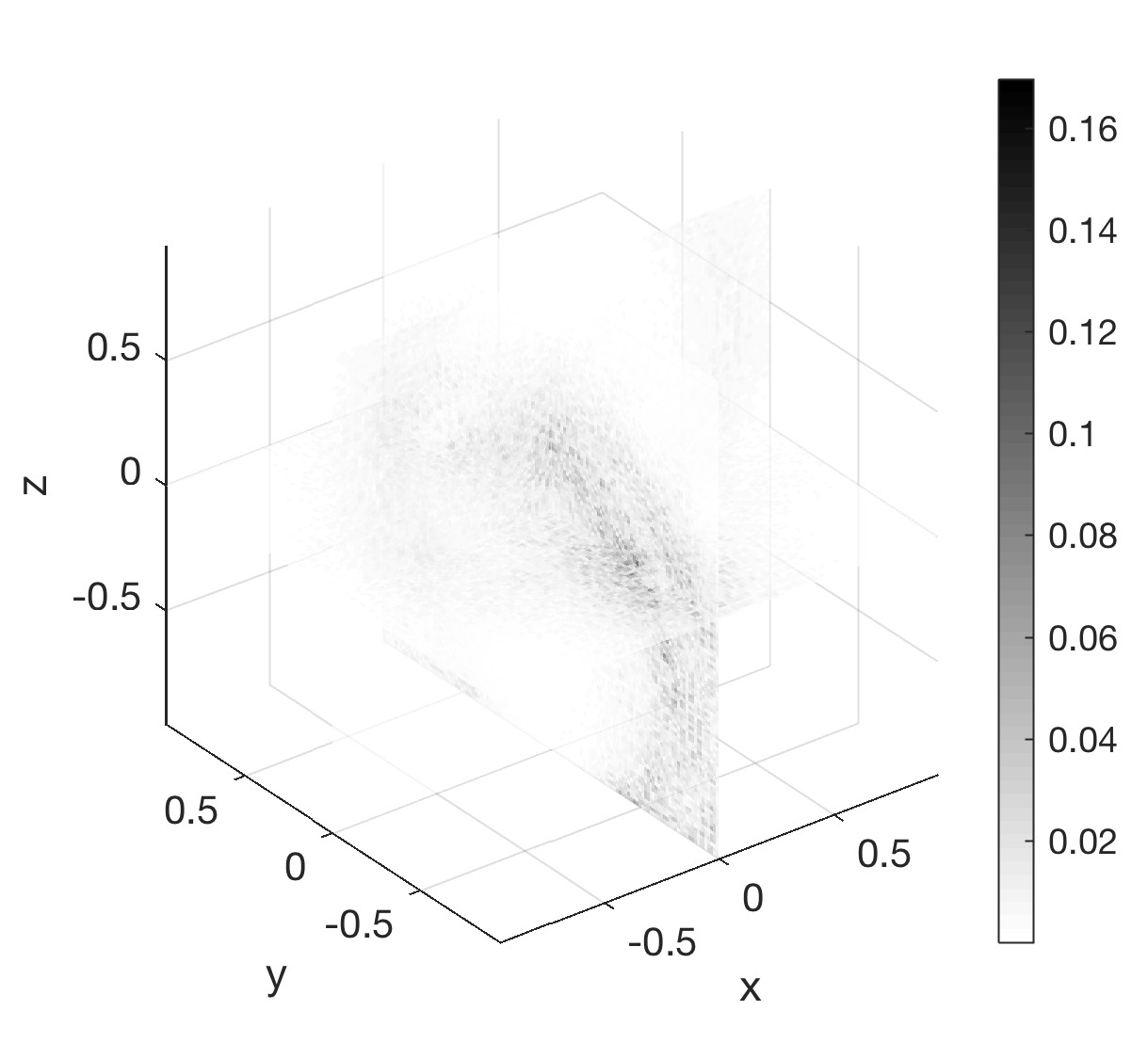}
    \end{tabular}
    \caption{Exp. 2: 3D slice plot of $\det( \nabla u_1, \nabla u_2, \nabla u_3)$
    in which the computed minimum is {\tt 0.7881} (left).
    3D slice plot of the error for reconstructed $\tilde{\gamma}_2$
    in relative Frobenius norm (right).}
    \label{fig:aniso_detDU}
\end{figure}

First, we check numerically that the determinant of the gradients
$\det( \nabla u_1, \nabla u_2, \nabla u_3)$ do not vanish.
We plot the determinant to the left in Fig.\ref{fig:aniso_detDU}, 
which reveals that the determinant stays away from zero.
Then we reconstruct the anisotropic tensor $\tilde{\gamma}_2$ using
the 3+2 algorithm outlined in Section \ref{sec:algo1}.
The reconstruction error is shown in Fig.\ref{fig:aniso_detDU} (right).

Using the reconstructed $\tilde{\gamma}_2$ we solve the Poisson problem
\eqref{eq:poisson_algo1}, to obtain the scaling $\tau_2$.
The reconstructed $\tau_2$ and its relative error is shown in 
Fig.\ref{fig:e4tau2}. Finally, the error for the 
reconstructed $\gamma_2$ itself is shown to the right in the same figure.
The reconstruction errors are summarized in 
Table \ref{tbl:error_gamma2}. The relative $L^1$ error is at $0.4\%$ and
pointwise relative error is less than $15\%$. The volume of the domain
that incurs pointwise relative error larger than $10\%$ is $0.005\%$,
hence the error is highly localized.

The error for the anisotropic part mostly originates from the approximation of
the matrices orthogonal to \eqref{eq:eightmat} and \eqref{eq:Zpj}, see
discussion at the end of Experiment 3 (Section \ref{sec:exp3}). 

\begin{table}
    \centering
    \begin{tabular}{r|r|r|r}
	& $\tilde{\gamma}_2 $ & $\tau_2$ & $\gamma_2 $ \\
	\hline 
	Rel. $L^1$ error & 0.00375946& 0.00030364 & 0.00407776\\
	Rel. $L^2$ error & 0.00789942& 0.00091769 & 0.00909787\\
	Rel. $L^\infty$ error & 0.11603989& 0.01264201 & 0.12900887\\
	Max. pointwise rel. error & 0.16959084& 0.01248439 & 0.15545096\\
    \end{tabular}
    \caption{Exp. 2: Summary of reconstruction error for $\gamma_2$ \eqref{eq:gamma2}.
    The error for the tensor-valued functions computed pointwise
by the Frobenius norm.}
\label{tbl:error_gamma2}
\end{table}

\begin{figure}
    \centering
    \begin{tabular}{ccc}
    \includegraphics[width=0.33\textwidth]{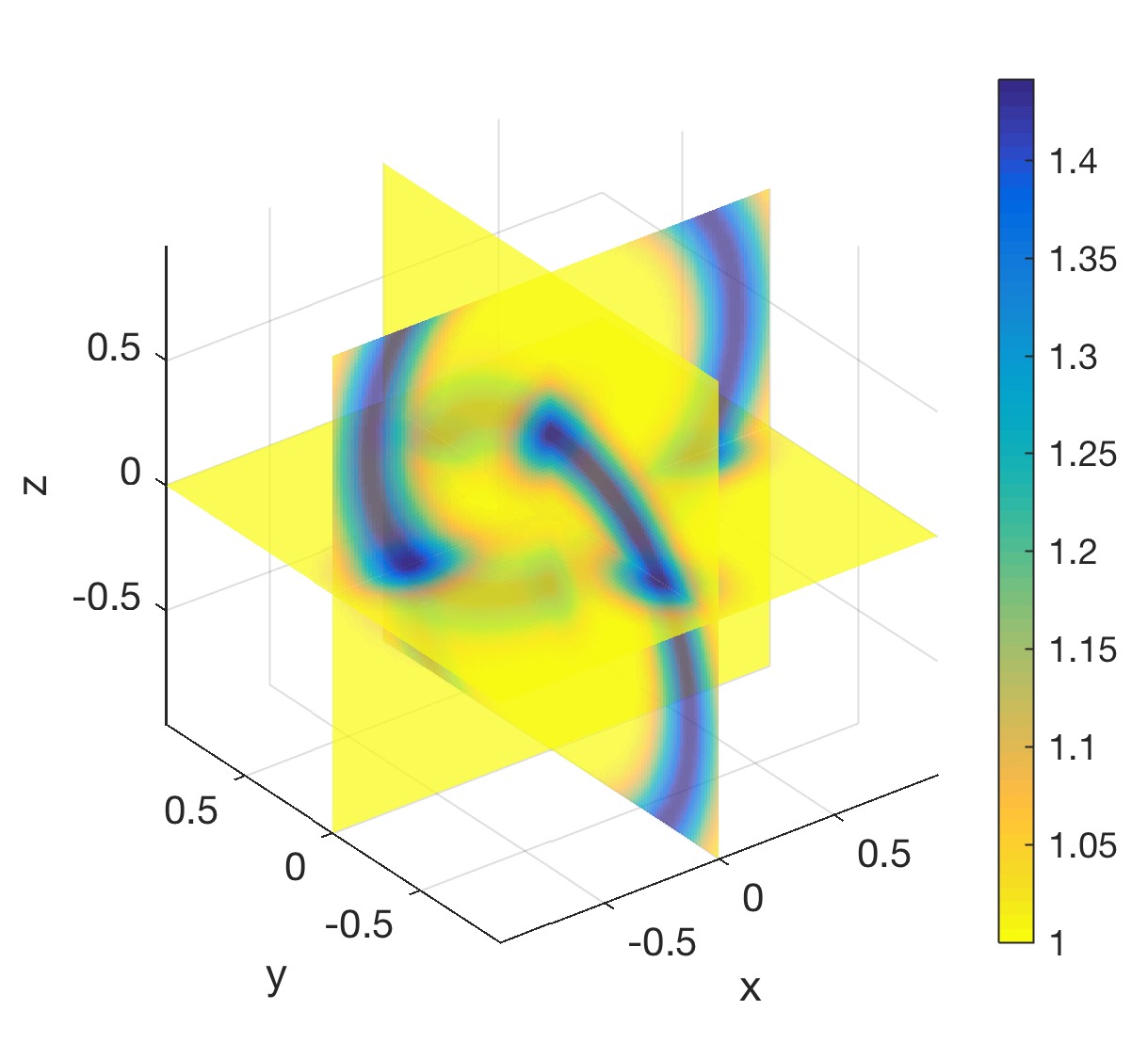}
        &
    \includegraphics[width=0.33\textwidth]{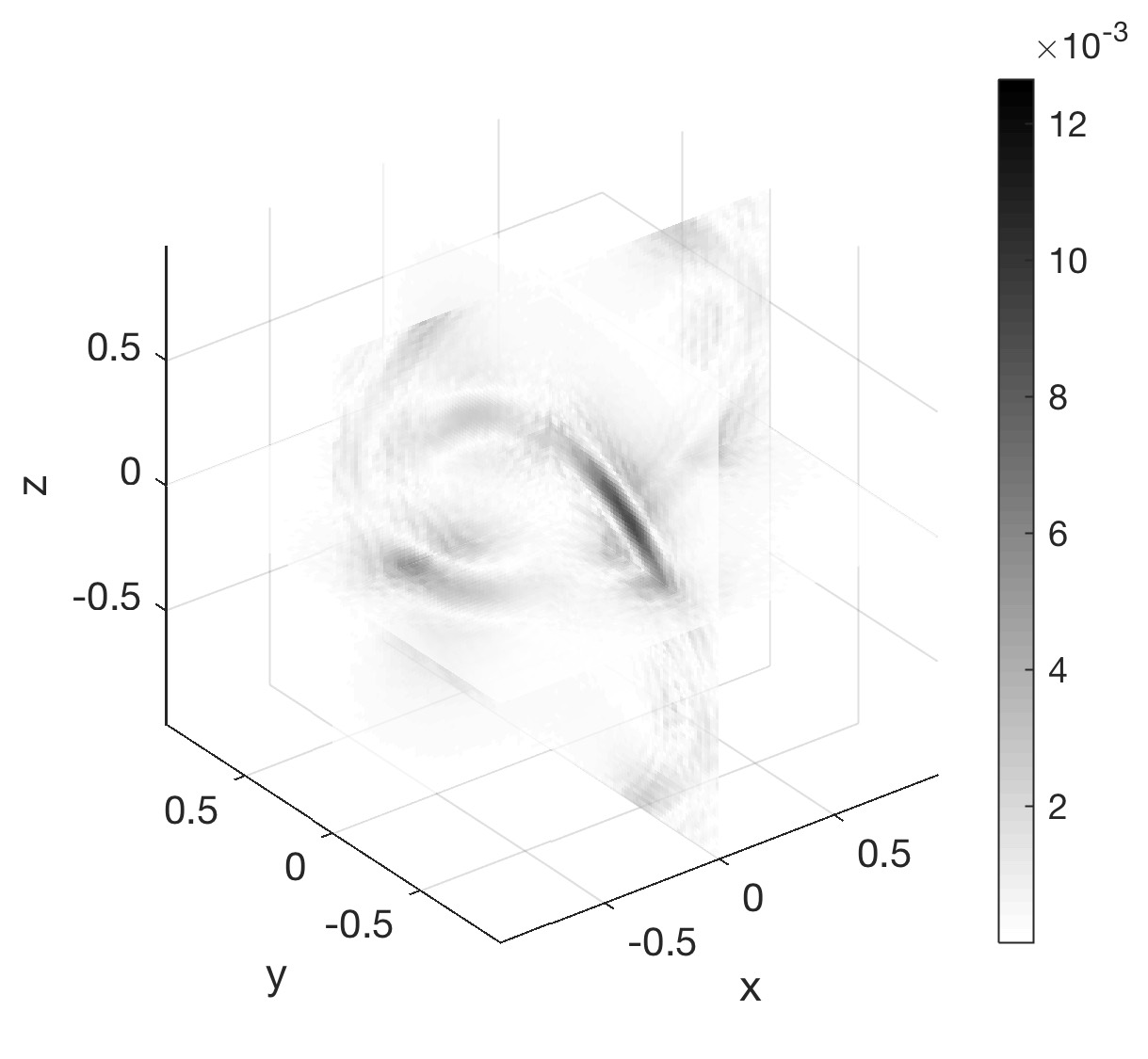} 
        &
    \includegraphics[width=0.33\textwidth]{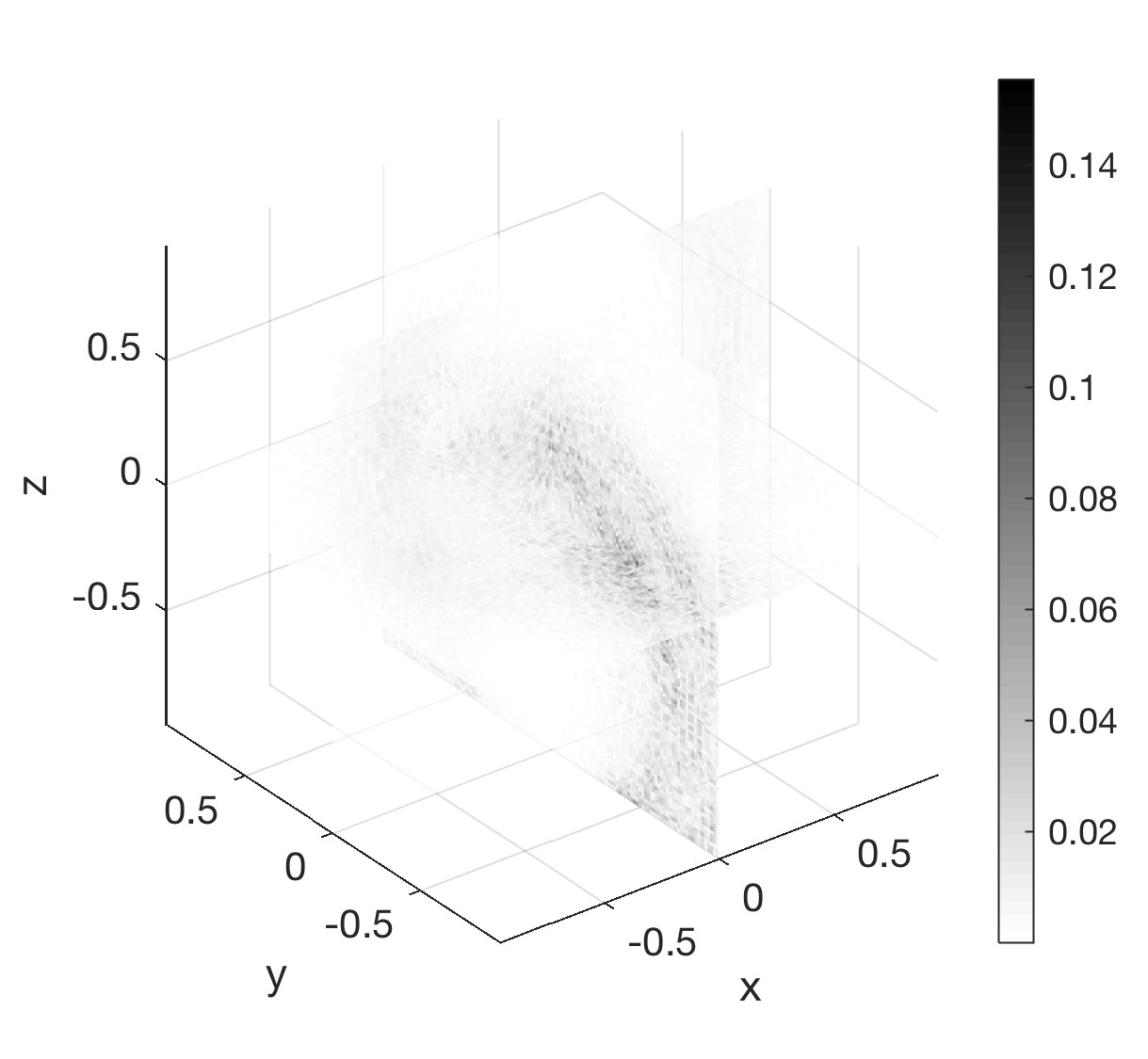} 
    \end{tabular}
    \caption{Exp. 2: 3D slice plots of reconstruction of $\tau_2$ (left), relative error for $\tau_2$ (middle) and relative error for $\gamma_2$ in log-scale (right). The errors are in terms of the Frobenius norm.}
    \label{fig:e4tau2}
\end{figure}

\subsection{Anisotropic reconstructions using more than $3+2$ solutions}\label{sec:exp3}

\paragraph{Experiment 3.} Here we perform the experiment for $\gamma_3$
\eqref{eq:gamma3} for which the 3+2 reconstruction algorithm fails,
due to the fact that the gradient of three solutions become linearly
dependent in some parts of the domain. To overcome this difficulty, we
will employ additional solutions and employ the stabilized algorithm
proposed in Section \ref{sec:algo2}.
We will use the solution set $(u_1,u_2,u_3,u_4,u_5,u_6,v_1,v_2,v_3)$
which satisfy the Dirichlet data
\begin{align*}
    u_1 \big\rvert_{\partial X} &= x,  \qquad
    u_4 \big\rvert_{\partial X} = x + \frac{3}{2}(z + 2)^2,  \qquad
    v_1 \big\rvert_{\partial X} = (x + 2)(y + 2), \\
    u_2 \big\rvert_{\partial X} &= y,  \qquad
    u_5 \big\rvert_{\partial X} = y + \frac{3}{2}(x + 2)^2,  \qquad
    v_2 \big\rvert_{\partial X} = (y + 2)(z + 2), \\
    u_3 \big\rvert_{\partial X} &= z,  \qquad
    u_6 \big\rvert_{\partial X} = z + \frac{3}{2}(y + 2)^2,  \qquad
    v_3 \big\rvert_{\partial X} = (z + 2)(x + 2). 
\end{align*}
We will define the triples of these solutions as follows,
\[
    U^{(1)} := (u_1,u_2,u_3), \quad
    U^{(2)} := (u_4,u_2,u_3), \quad
    U^{(3)} := (u_1,u_5,u_3), \quad
    U^{(4)} := (u_1,u_2,u_6).
\]
\begin{figure}
    \centering
    \begin{tabular}{cccc}
    \includegraphics[width=0.24\textwidth]{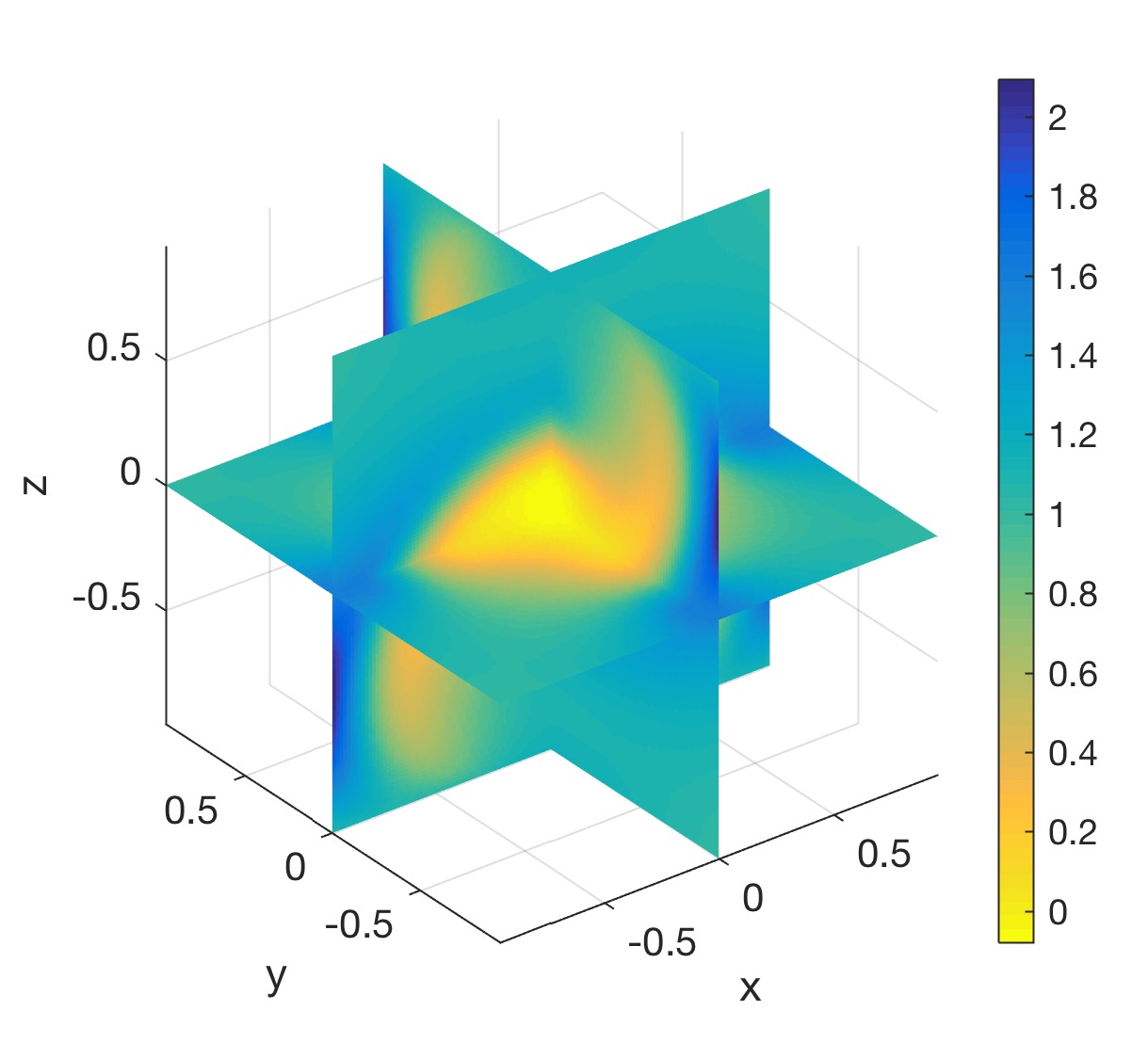}
        &
    \includegraphics[width=0.24\textwidth]{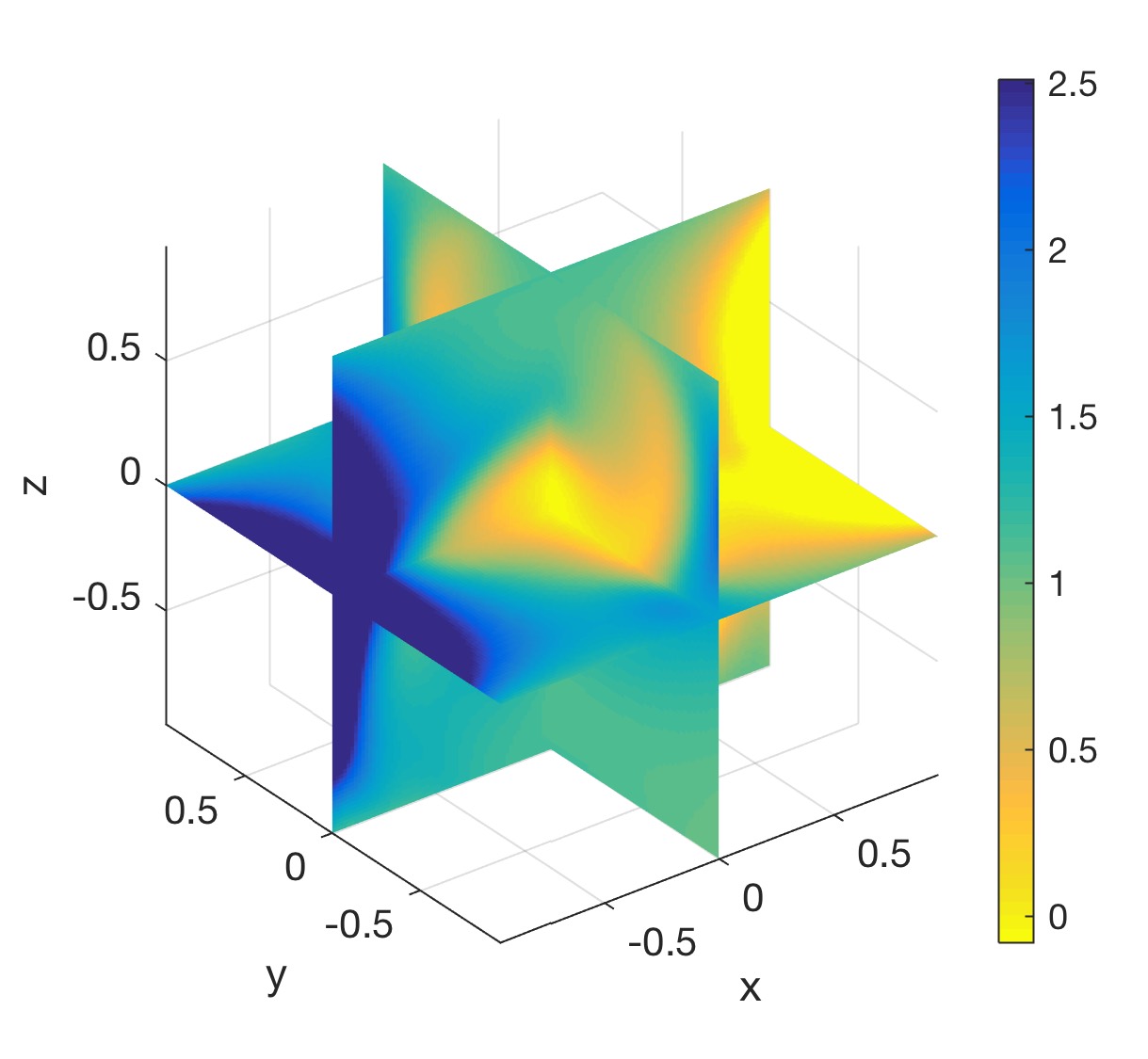} 
        &
    \includegraphics[width=0.24\textwidth]{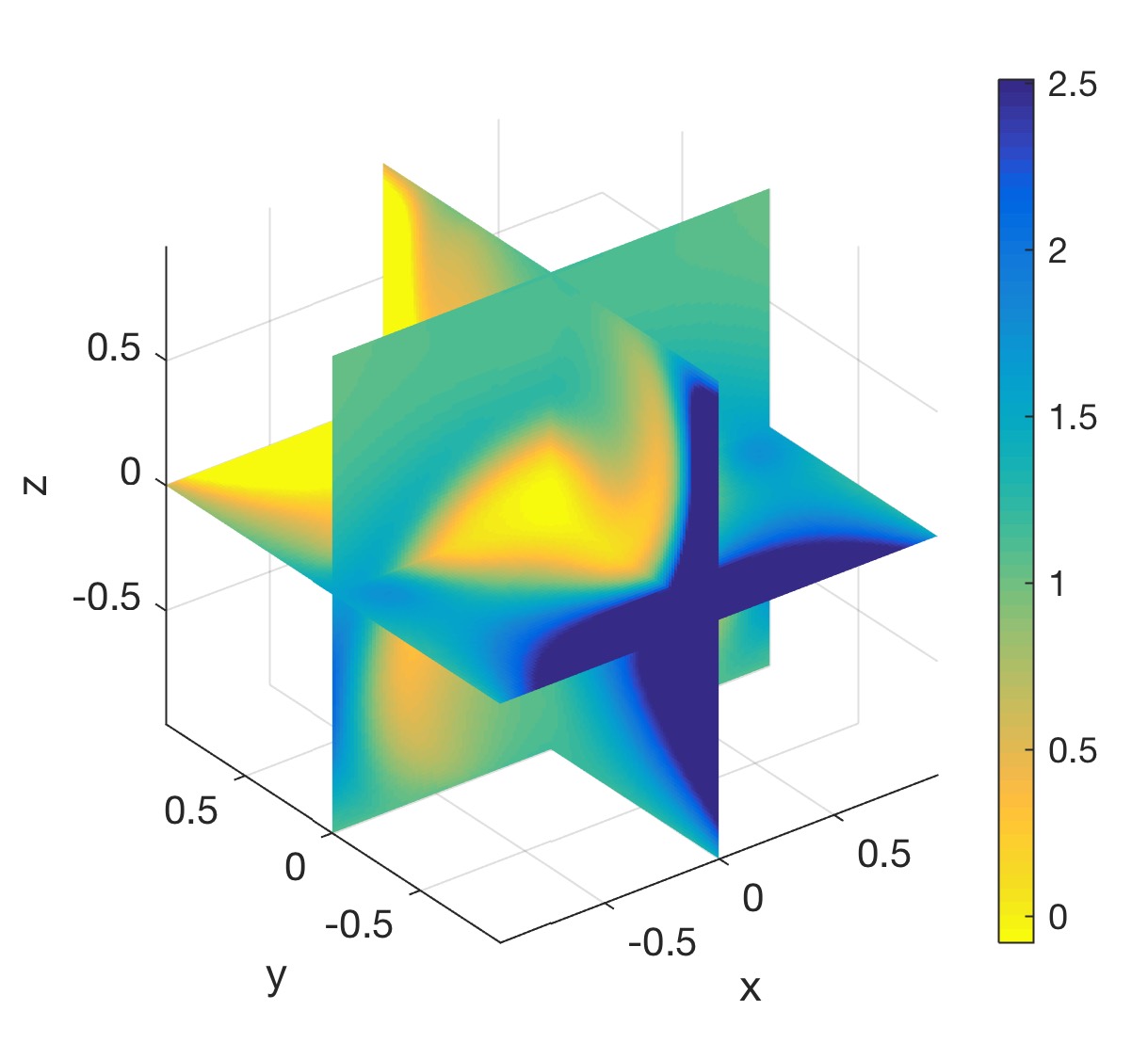}
        &
    \includegraphics[width=0.24\textwidth]{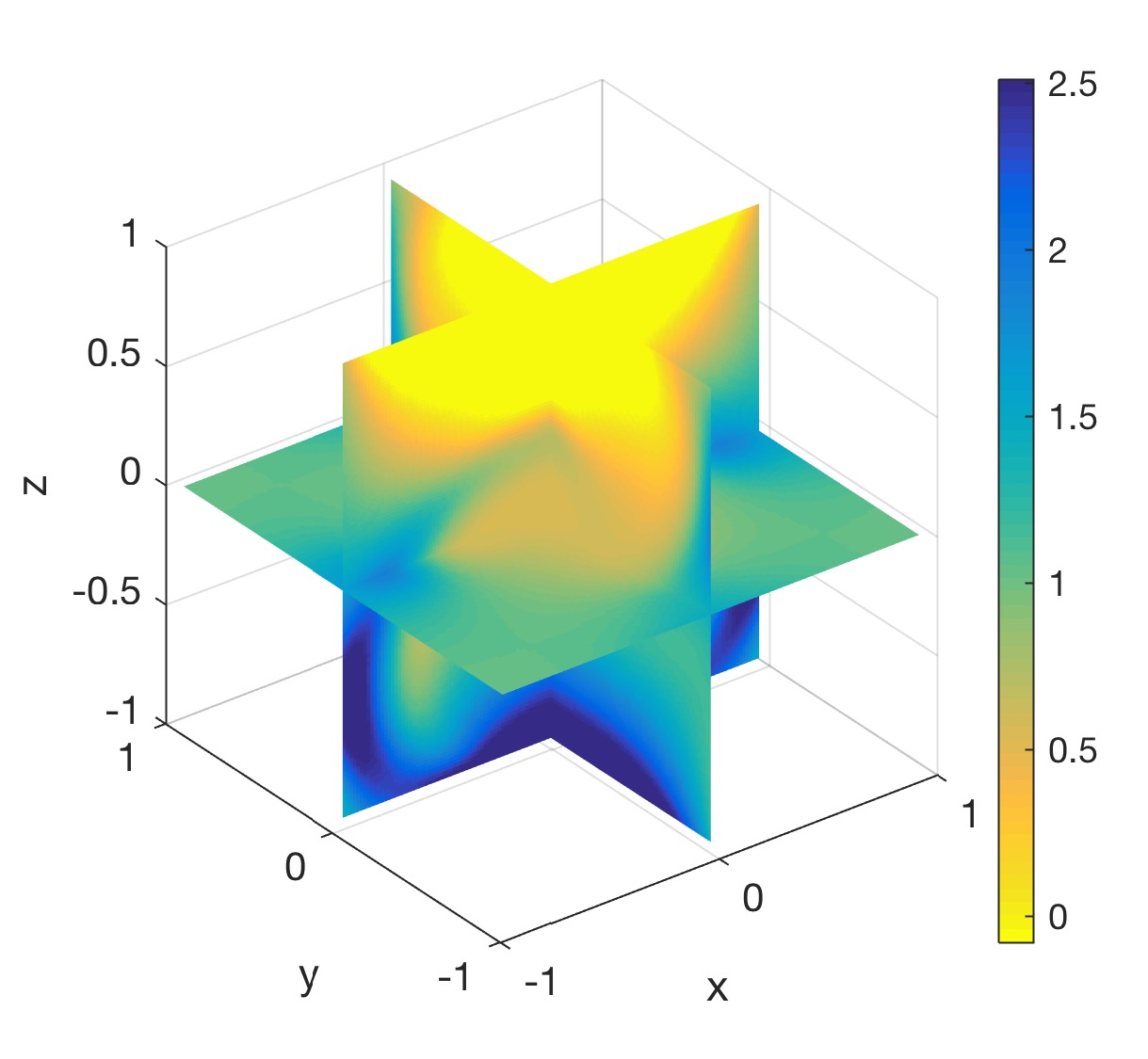} \\
    \includegraphics[width=0.24\textwidth]{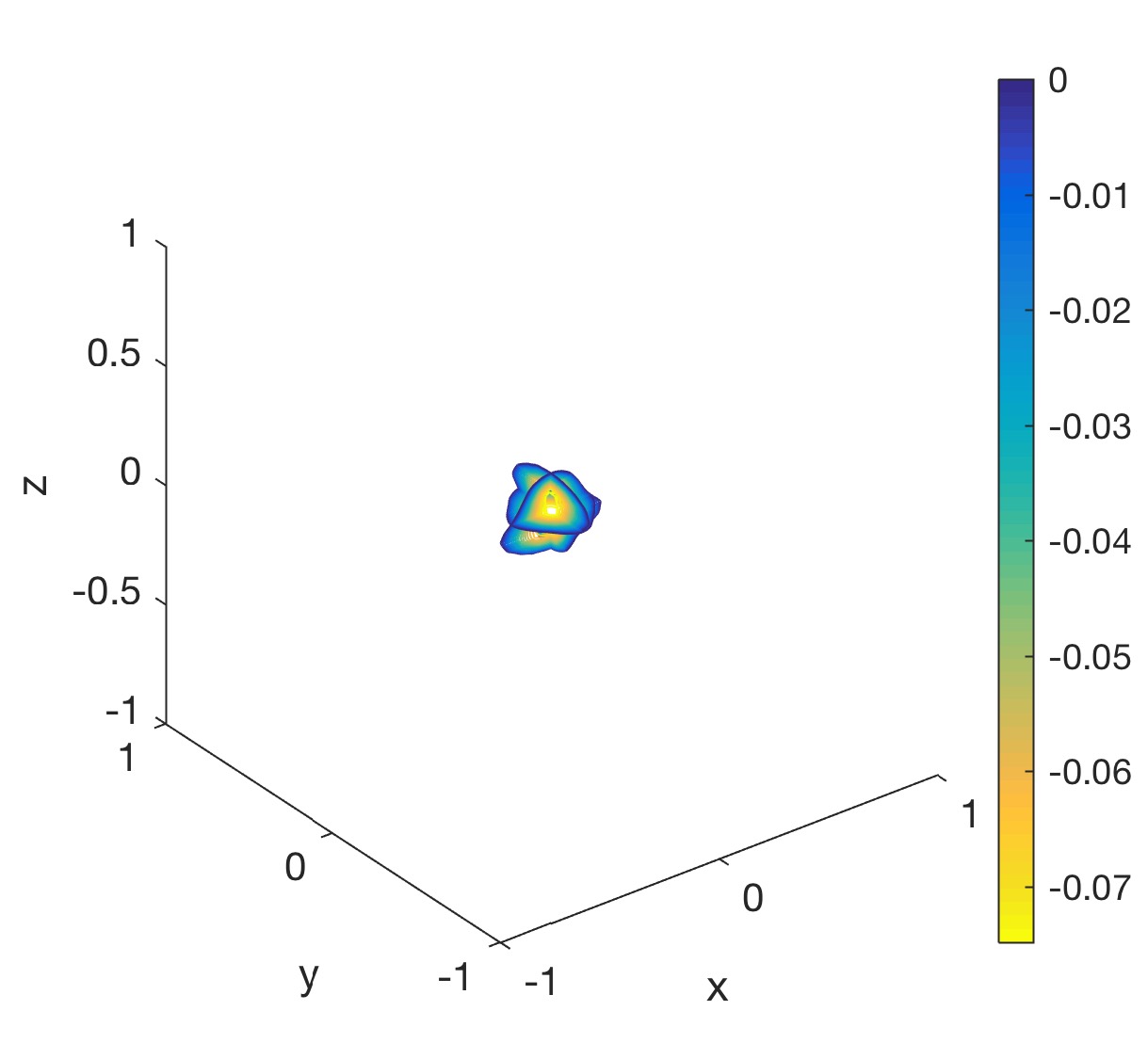}
        &
    \includegraphics[width=0.24\textwidth]{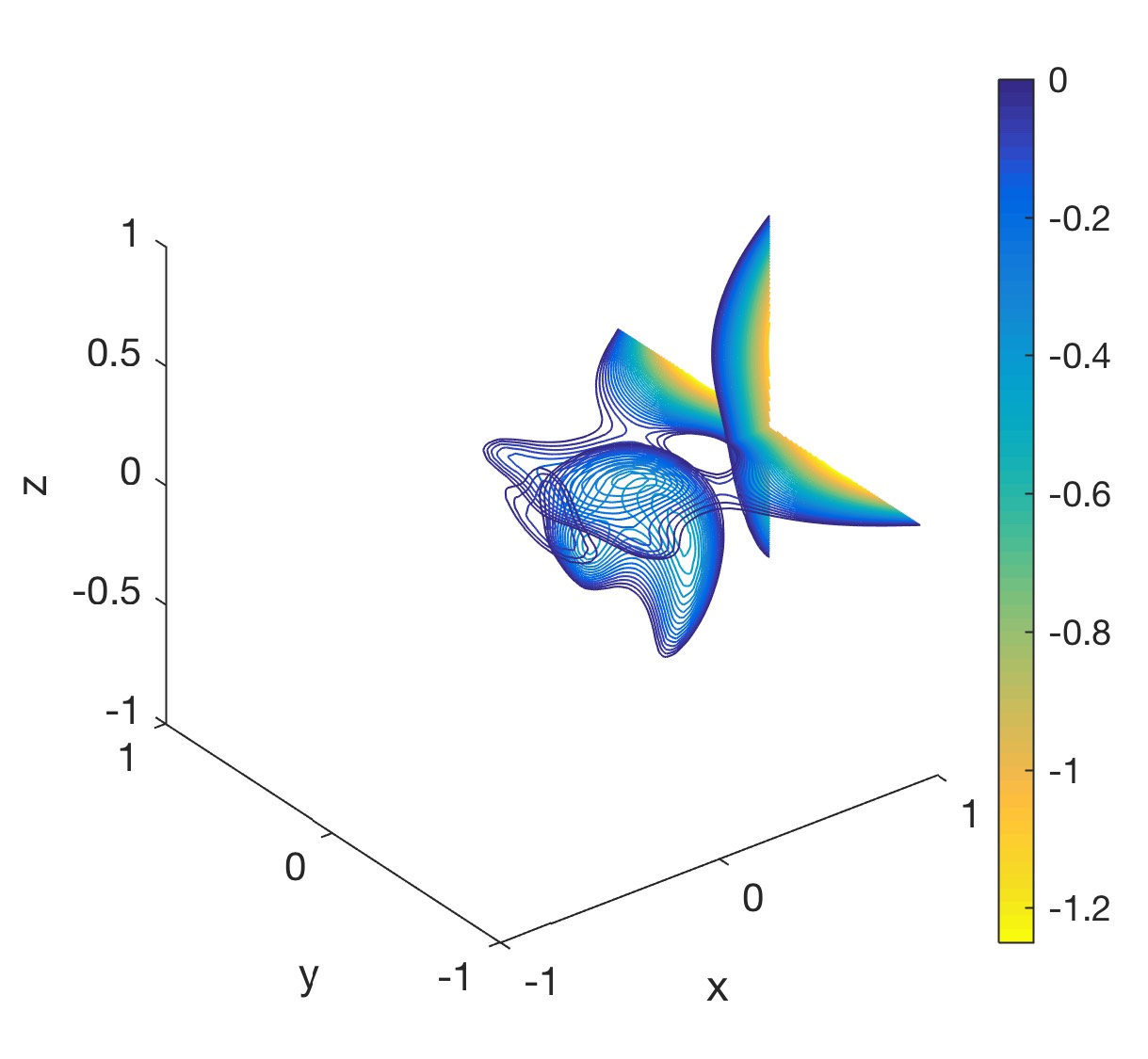} 
        &
    \includegraphics[width=0.24\textwidth]{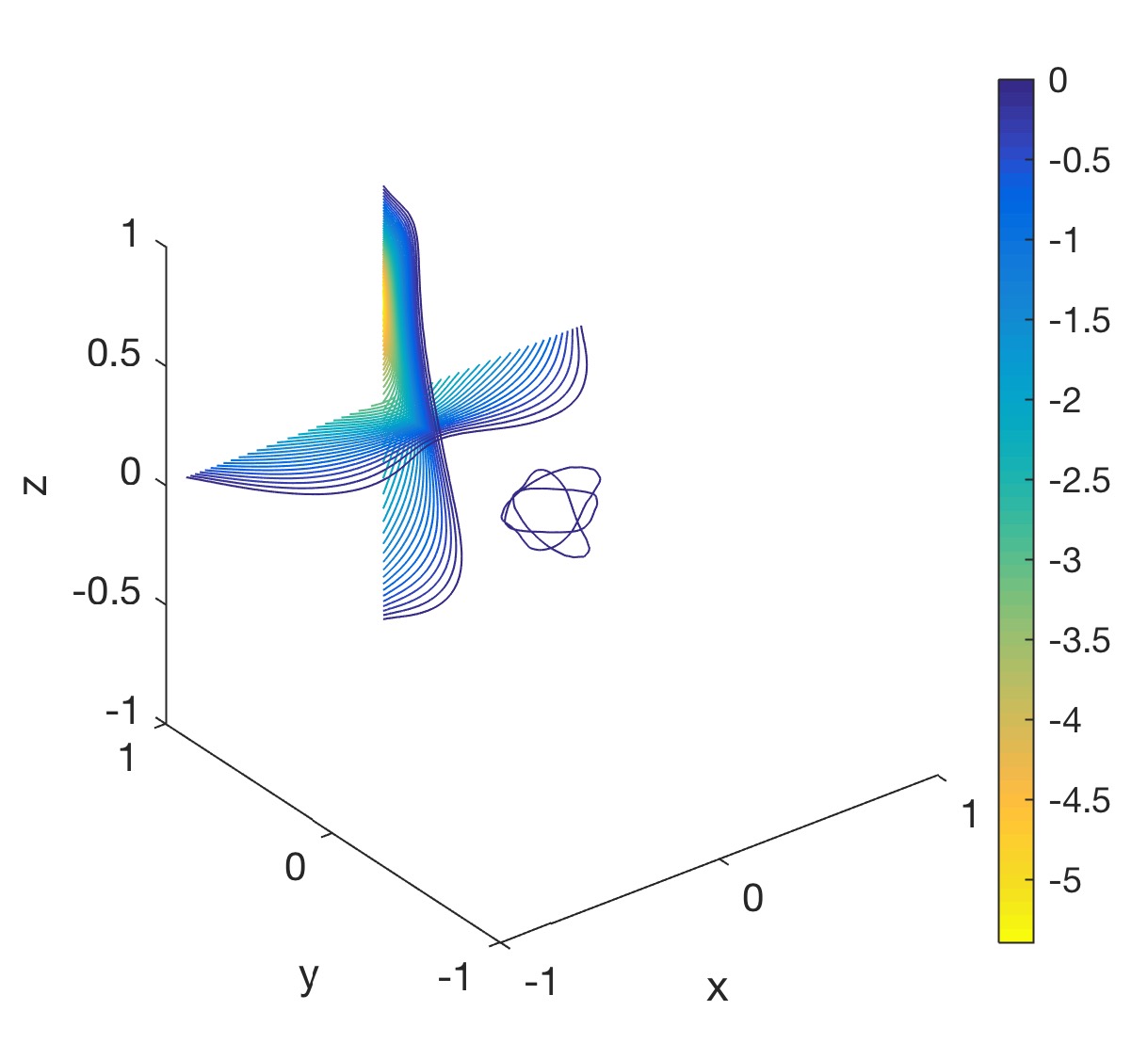}
        &
    \includegraphics[width=0.24\textwidth]{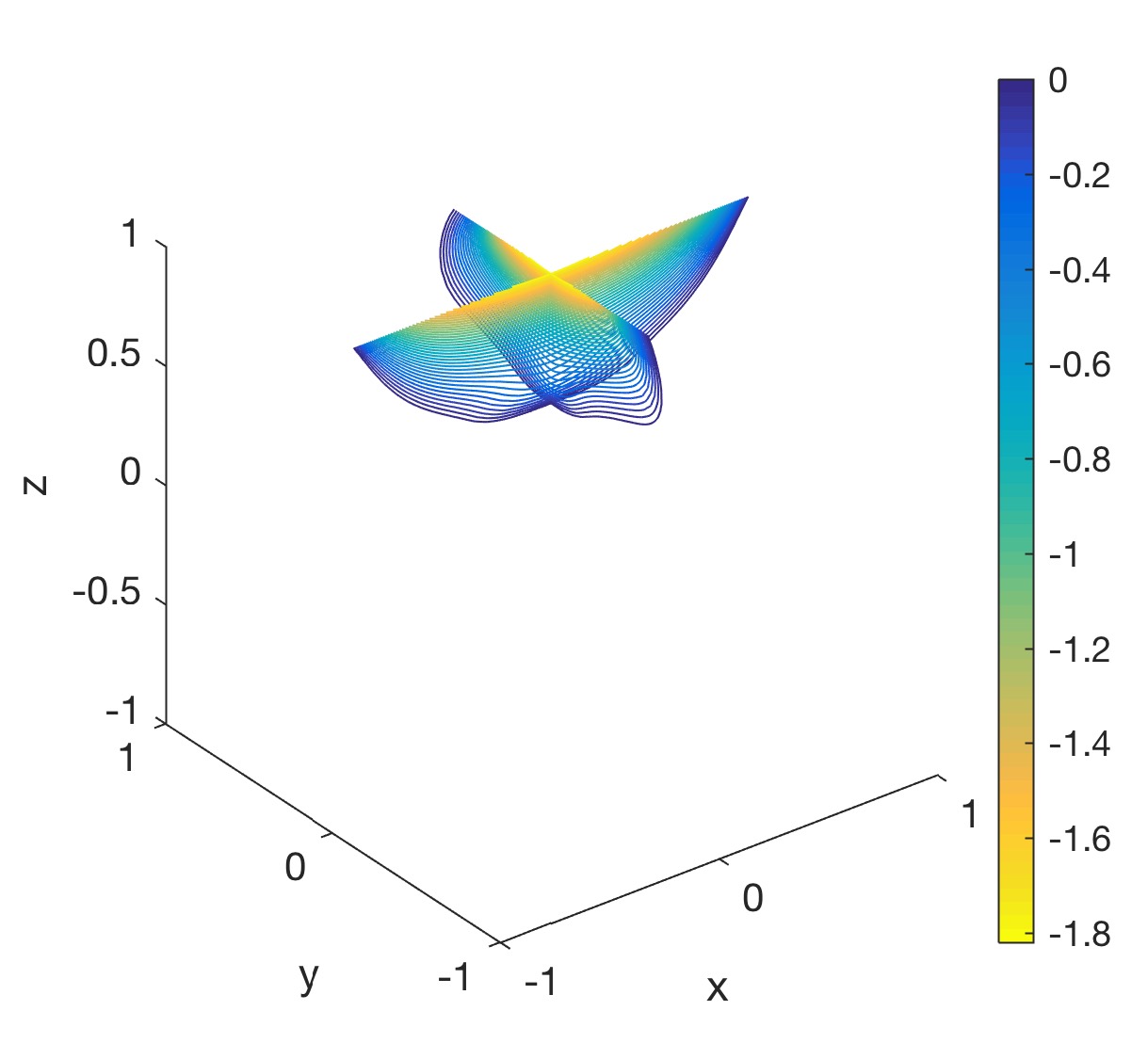}\\
        $\det DU^{(1)}$ &
        $\det DU^{(2)}$ &
        $\det DU^{(3)}$ &
        $\det DU^{(4)}$ 
    \end{tabular}
    \caption{Exp. 3: Determinants $\det DU^{(j)}$ for $j=1,2,3,4$. Top row: slice plots. Bottom row: iso-lines corresponding to the negative values.}
    \label{fig:4detDU}
\end{figure}
The determinant for each of these triples are then denoted 
\begin{align*}
    \det DU^{(1)} = \det(\nabla u_1, \nabla u_2, \nabla u_3), &\qquad \det DU^{(2)} = \det(\nabla u_4, \nabla u_2, \nabla u_3), \\
    \det DU^{(3)} = \det(\nabla u_1, \nabla u_5, \nabla u_3), &\qquad \det DU^{(4)} = \det(\nabla u_1, \nabla u_2, \nabla u_6).
\end{align*}
For each of these triples, the determinant $\det DU^{(j)}$ vanishes and 
switches sign inside $X$, as shown in the slice plots on 
Fig.\ref{fig:4detDU}. Therefore, the determinants of $H^{(j)}$ ($j=1,2,3,4$) 
also vanish and the 3+2 algorithm (based on a single triple of solutions) from 
Section \ref{sec:algo1} fails. On the other hand, one may visualize on 
Fig.\ref{fig:detDUsqsum} that the quantity 
\[
    |\det DU^{(1)}|^2 + 
    |\det DU^{(2)}|^2 + 
    |\det DU^{(3)}|^2 + 
    |\det DU^{(4)}|^2 
\]
is everywhere positive on $X$ (that is, the intersection of the zero sets corresponding
to the four triples is empty), setting the stage for an implementation of
the stabilized algorithm. 

Now one may reconstruct $\tilde{\gamma}^{(j)}$ as in the 3+2 algorithm,
for each solution triples $U^{(j)}$ but such individual reconstructions
fail locally, though never simultaneously. The choices of solutions
to use for the individual 3+2 algorithms are,
\[ \begin{aligned}
    U^{(1)} &= (u_1, u_2, u_3) \quad \text{ with } \quad  (v_1,v_2),\\
    U^{(2)} &= (u_4, u_2, u_3) \quad \text{ with } \quad  (v_2,v_3),\\
    U^{(3)} &= (u_1, u_5, u_3) \quad \text{ with } \quad  (v_1,v_2),\\
    U^{(4)} &= (u_1, u_2, u_6) \quad \text{ with } \quad  (v_2,v_3).
\end{aligned}
\]
The individual errors are shown in Fig.\ref{fig:e4tgamma2}.
However, when they are combined in the system \eqref{eq:stable_poisson}
one can successfully recover the scaling $\tau_3$, as shown in 
Fig.\ref{fig:e4tau3}.

For reconstruction of $\tilde{\gamma}_3$, one may weight each individual
reconstructions $\tilde{\gamma}^{(j)}$ by $|H^{(j)}|$,
\begin{equation}
    \tilde{\gamma}_{3,\mathrm{H}}
    :=  \frac{
      |H^{(1)}| \tilde{\gamma}^{(1)}
    + |H^{(2)}| \tilde{\gamma}^{(2)}
    + |H^{(3)}| \tilde{\gamma}^{(3)}
    + |H^{(4)}| \tilde{\gamma}^{(4)}}
    { \det\left(
      |H^{(1)}| \tilde{\gamma}^{(1)}
    + |H^{(2)}| \tilde{\gamma}^{(2)}
    + |H^{(3)}| \tilde{\gamma}^{(3)}
    + |H^{(4)}| \tilde{\gamma}^{(4)}\right)^{1/3}}
   .
    \label{eq:tgamma3H}
\end{equation}
On the other hand, we observe numerically that the errors for the individual
$\tilde{\gamma}^{(j)}$'s are inversely proportional to 
$\lVert \tilde{\gamma}^{(j)} \rVert_F$. 
An intuitive argument for this is that the orthogonalization to compute $B'$ 
(see Step A in Section \ref{sec:algo1} or its stabilized equivalent 
Step A' in Section \ref{sec:algo2}) corresponds more naturally to the 
Frobenius norm. Hence $B'$ is most accurate when normalized in this norm,
and hence whenever the reconstructed approximation to $\tilde{\gamma}^{(j)}$ has
a large Frobenius norm $\lVert \tilde{\gamma}^{(j)} \rVert_F$, one may expect
a large error.
To exploit this observation, we first choose to exclude the approximation
among $\{\tilde{\gamma}^{(j)}\}$ that has the largest Frobenius norm.
To this end, we first define the spatially dependent index set

\begin{equation}
\mathcal{J}(\x) = \{1,2,3,4\} \setminus 
\textrm{argmax}_i \{\lVert \tilde{\gamma}^{(i)}(\x)\rVert_F  \}.
\label{eq:Jx}
\end{equation}
Then, we define the approximation $\tilde{\gamma}_{3,\mathrm{F}}$ using
the weighted sum of the remaining members of $\{\tilde{\gamma}^{(j)}\}$,
\begin{equation}
    \tilde{\gamma}_{3,\mathrm{F}}(\x)
    := 
    \left.
    \sum_{j \in \mathcal{J}(\x)}  
        \frac{\tilde{\gamma}^{(j)}}{\lVert \tilde{\gamma}^{(j)}\rVert_F}
    \right/
    \det
    \left(
    \sum_{j \in \mathcal{J}(\x)}  
        \frac{\tilde{\gamma}^{(j)}}{\lVert \tilde{\gamma}^{(j)}\rVert_F}
    \right)^{1/3}.
    \label{eq:tgamma3F}
\end{equation}
This approximation yields an improvement over \eqref{eq:tgamma3H}.

The full reconstruction for $\gamma_3$ is then achieved by
$\tau_3 \tilde{\gamma}_{3,\textrm{F}}$.
The errors from both reconstructions are summarized in 
Table \ref{tbl:error_gamma3}.
The relative $L^1$ error for $\gamma_3$ is at $5\%$ and
pointwise relative error is less than $139\%$. The volume of the domain
that incurs pointwise relative error larger than $50\%$ is $0.03\%$,
so here the error is highly localized, as was the case in Experiment 2.
Figure \ref{fig:e4detHnorm9} compares the relative errors between
$\tilde{\gamma}_{3,\mathrm{F}}$ and $\tilde{\gamma}_{3,\mathrm{H}}$.
The weighting \eqref{eq:tgamma3F} significantly improves the error,
numerically illustrating that the Frobenius norm serves as a good estimator
of the accuracy of the anisotropic part.

\begin{table}
    \centering
    \begin{tabular}{r|r|r|r|r}
	& $\tilde{\gamma}_{3,\textrm{H}}$ 
	& $\tilde{\gamma}_{3,\textrm{F}}$ 
	& $\tau_3$ 
	& $\gamma_3 $ \\
	\hline 
	{Rel.} $L^1$ error & {0.03778109}   & {0.03233641} & {0.00151633} & {0.05352633} \\
	{Rel.} $L^2$ error & {0.08114189}   & {0.07685639} & {0.00393286} & {0.10300765} \\
	{Rel.} $L^\infty$ error & {2.48883708} & {0.73548767} & {0.08635811} & {0.73512348} \\
	Max. {pointwise rel.} error   & {10.93893332} & {1.40606227} & {0.11662169}&  {1.39683706} \\
    \end{tabular}
    \caption{{Exp. 3:} Summary of reconstruction error for $\gamma_3$ \eqref{eq:gamma3}.
    The error for the tensor-valued functions computed pointwise
    by the Frobenius norm, and 
    $\gamma_3 = \tau_3 \tilde{\gamma}_{3,\textrm{F}}$.}
    \label{tbl:error_gamma3}
\end{table}
\begin{figure}
    \centering
    \begin{tabular}{cc}
	\includegraphics[width=0.4\textwidth]{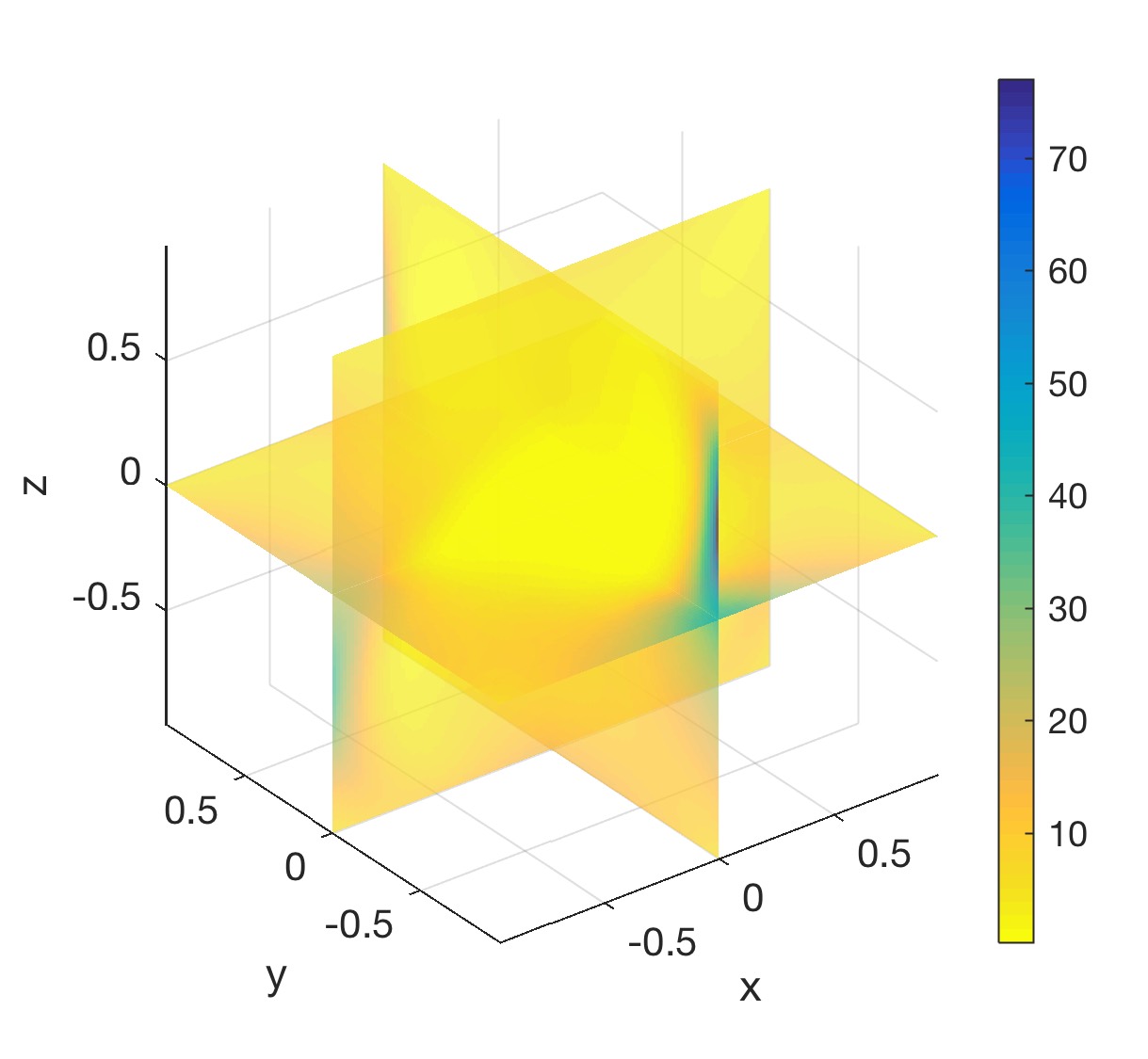}
	&
	\includegraphics[width=0.4\textwidth]{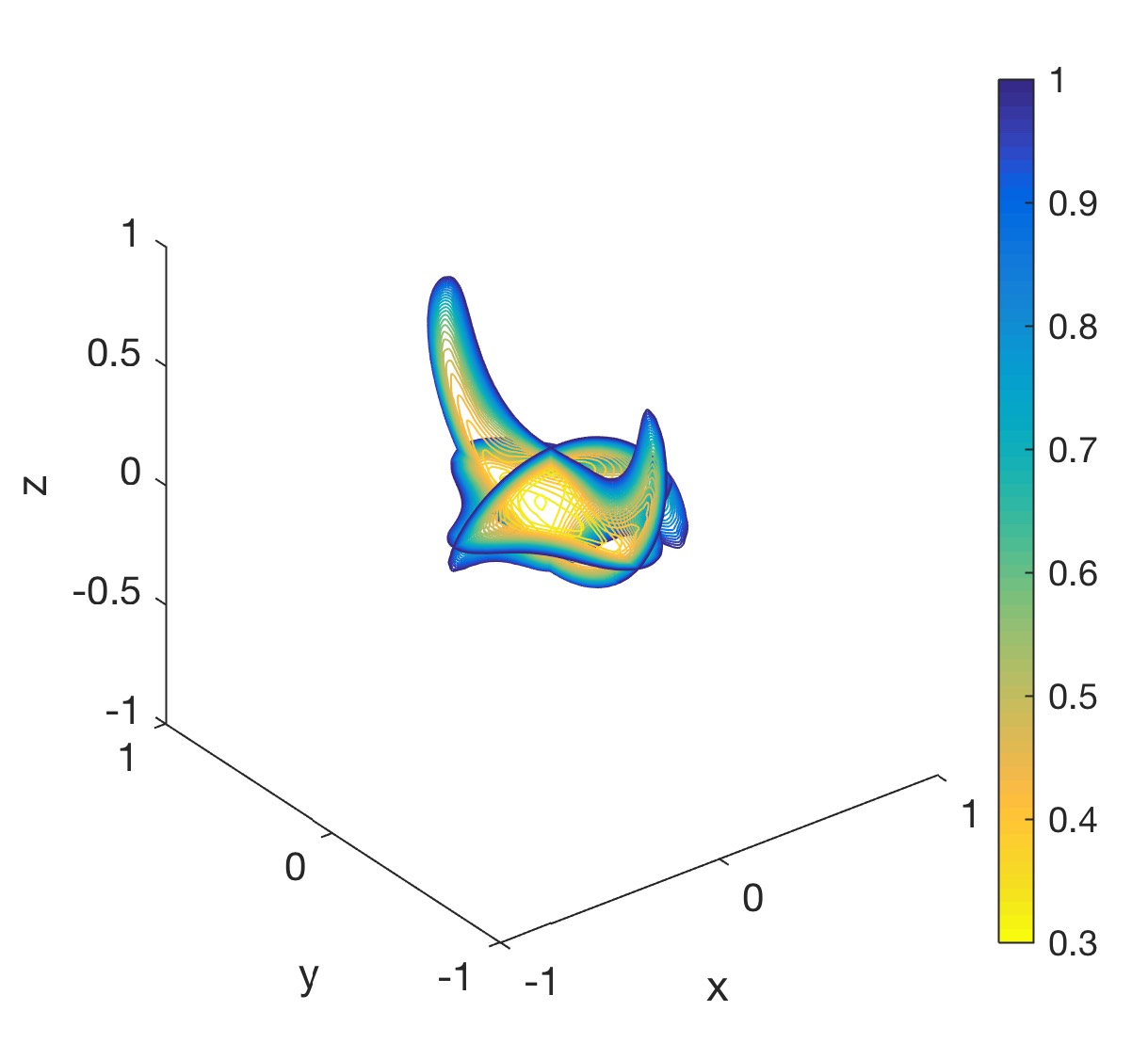}
    \end{tabular}
    \caption{Exp. 3: 3D slice plot of $\sum_{j=1}^4|\det DU^{(j)}|^2$ (left) and 
    3D contour slice plot near its minimum (right). Computed minimum is {\tt 0.2981}.}
    \label{fig:detDUsqsum}
\end{figure}

\begin{figure}
    \centering
    \begin{tabular}{cccc}
    \includegraphics[width=0.24\textwidth]{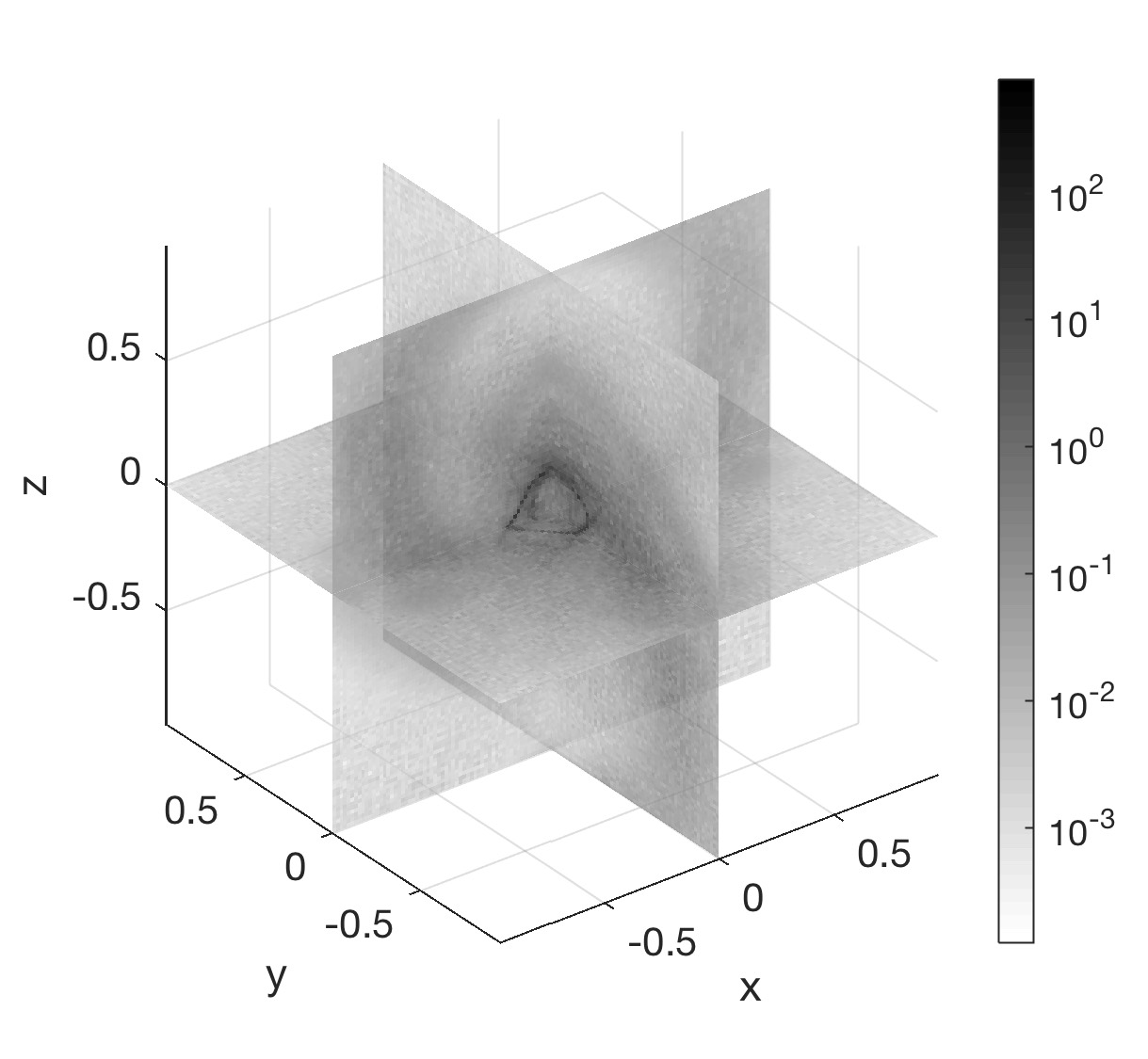}
        &
    \includegraphics[width=0.24\textwidth]{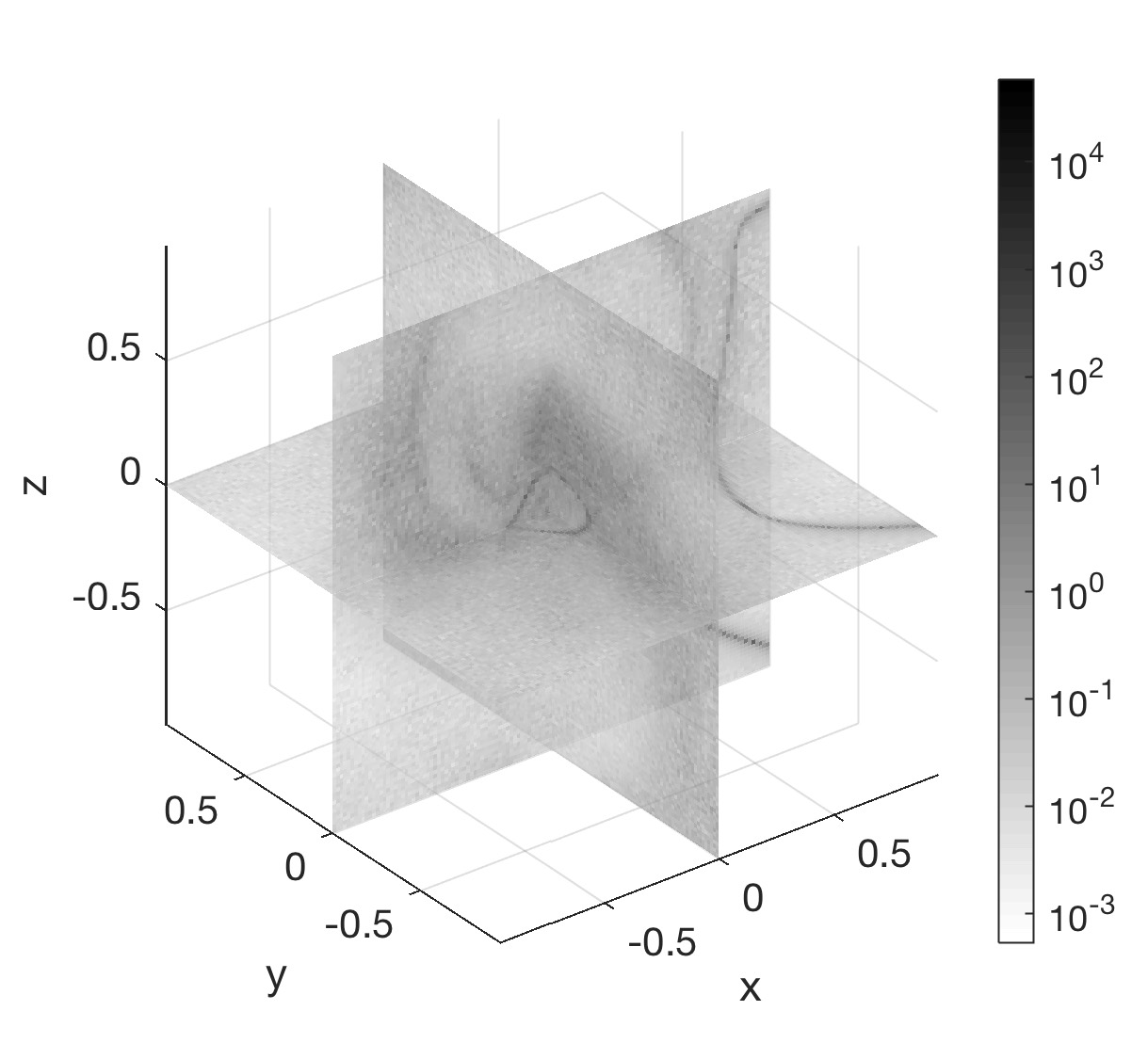} 
        &
    \includegraphics[width=0.24\textwidth]{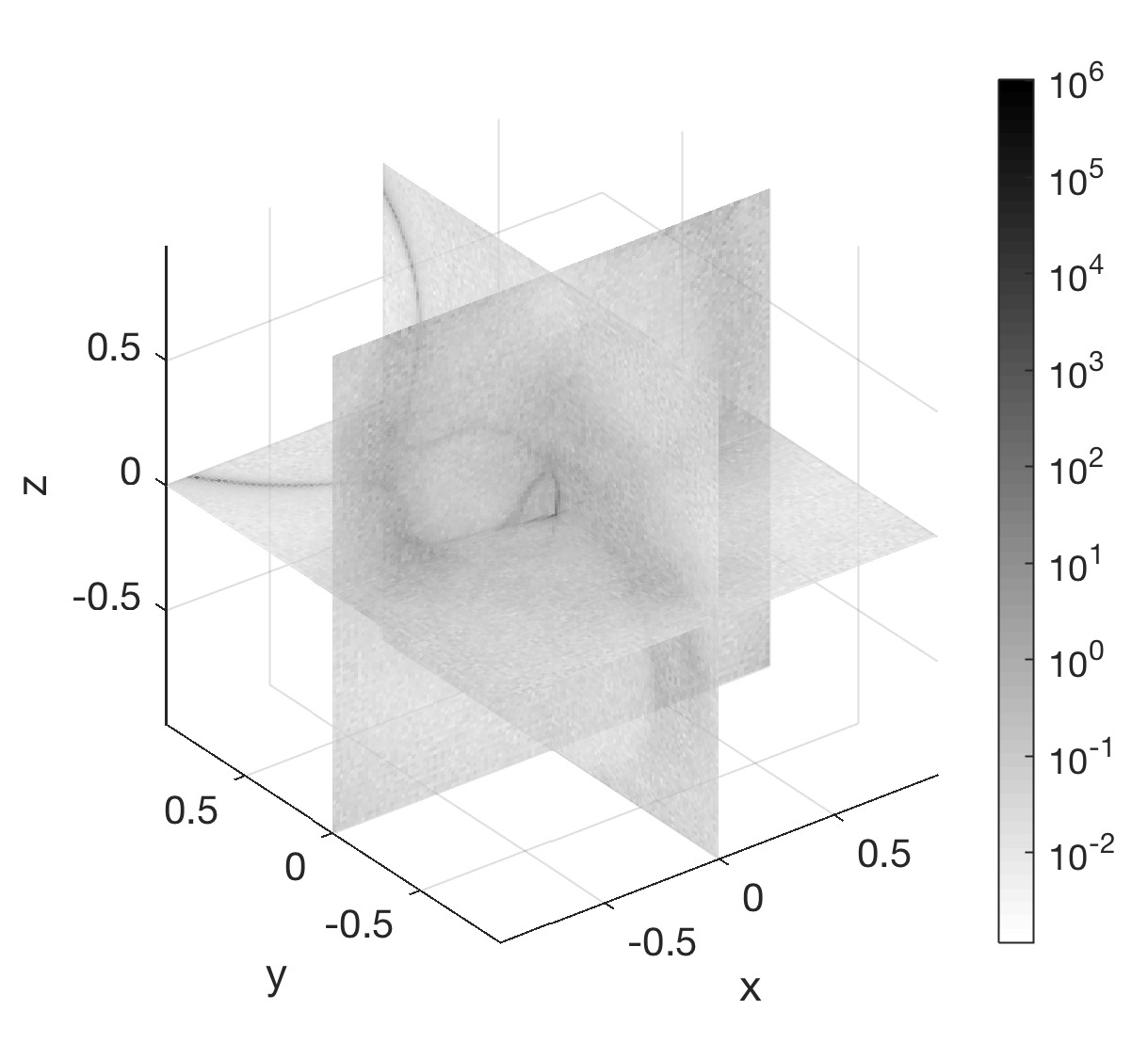}
        &
    \includegraphics[width=0.24\textwidth]{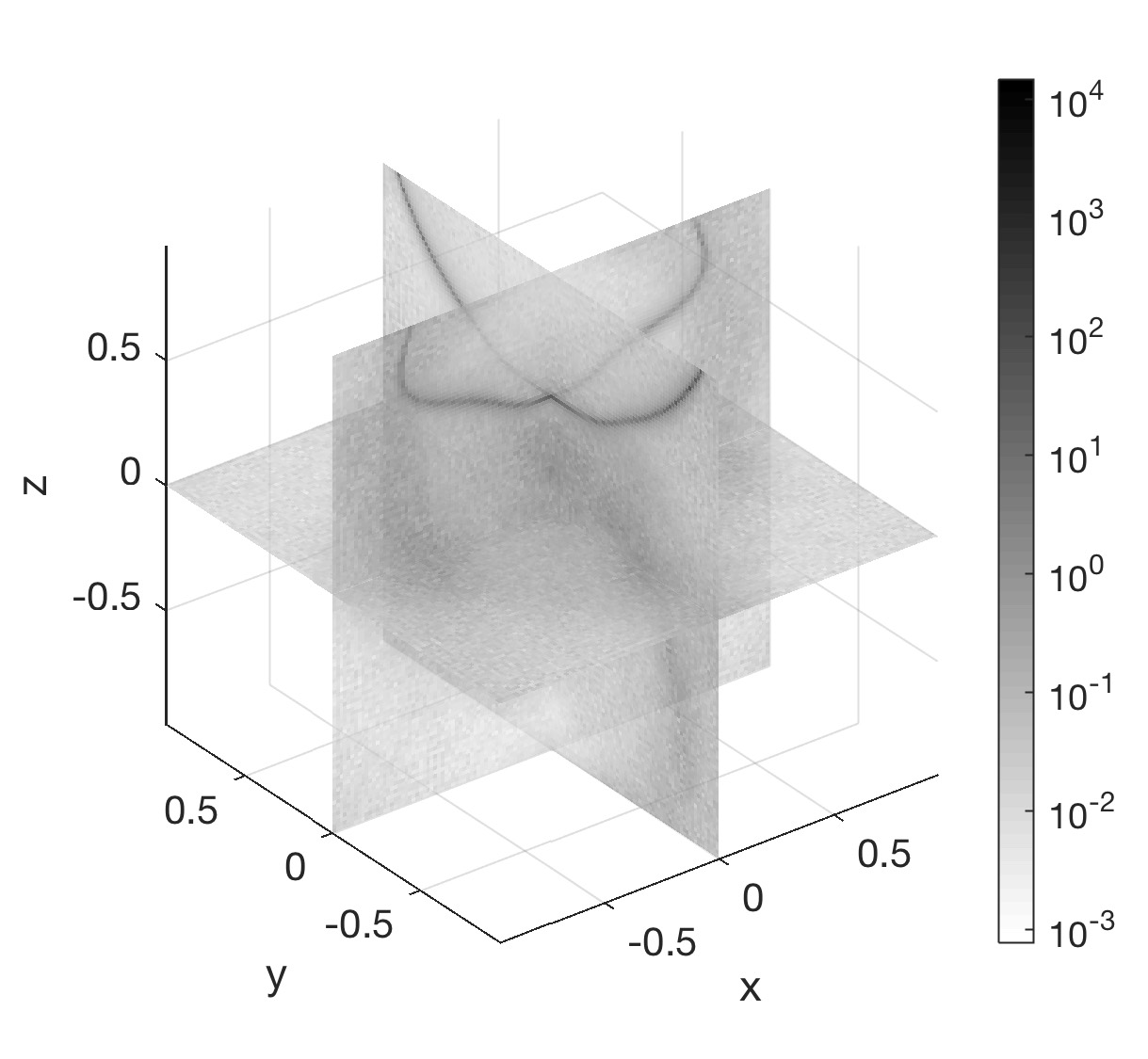}\\
        {\small  Error for $\tilde{\gamma}_3^{(1)}$}  &
        {\small  Error for $\tilde{\gamma}_3^{(2)}$}  &
        {\small  Error for $\tilde{\gamma}_3^{(3)}$}  &
        {\small  Error for $\tilde{\gamma}_3^{(4)}$}   
    \end{tabular}
    \caption{Exp. 3: Reconstruction error for $\tilde{\gamma}_3^{(j)}$ ($j=1,2,3,4$) measured in Frobenius norm in log-scale.}
    \label{fig:e4tgamma2}
\end{figure}

\begin{figure}
    \centering
    \begin{tabular}{ccc}
    \includegraphics[width=0.33\textwidth]{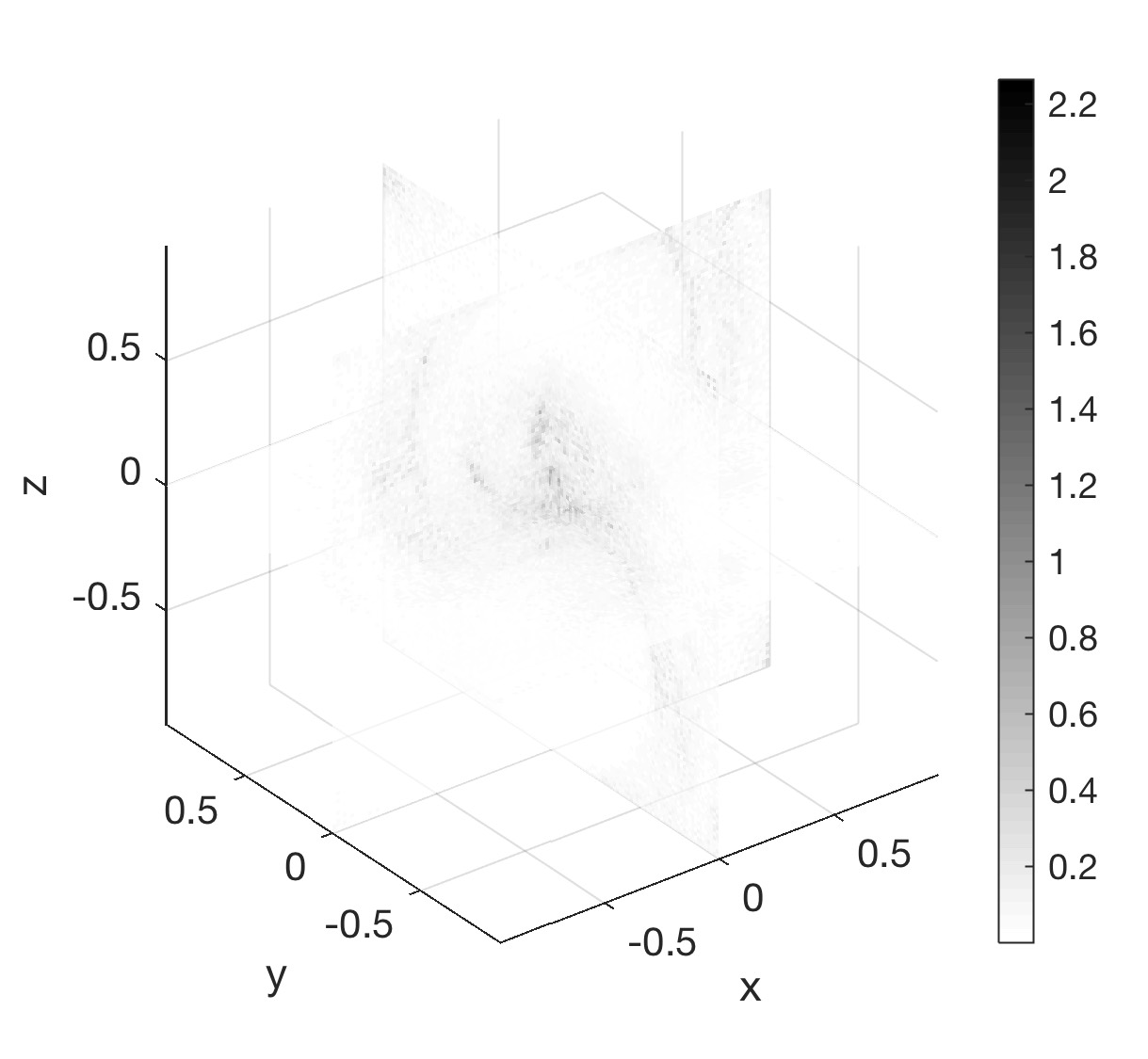}
        &
    \includegraphics[width=0.33\textwidth]{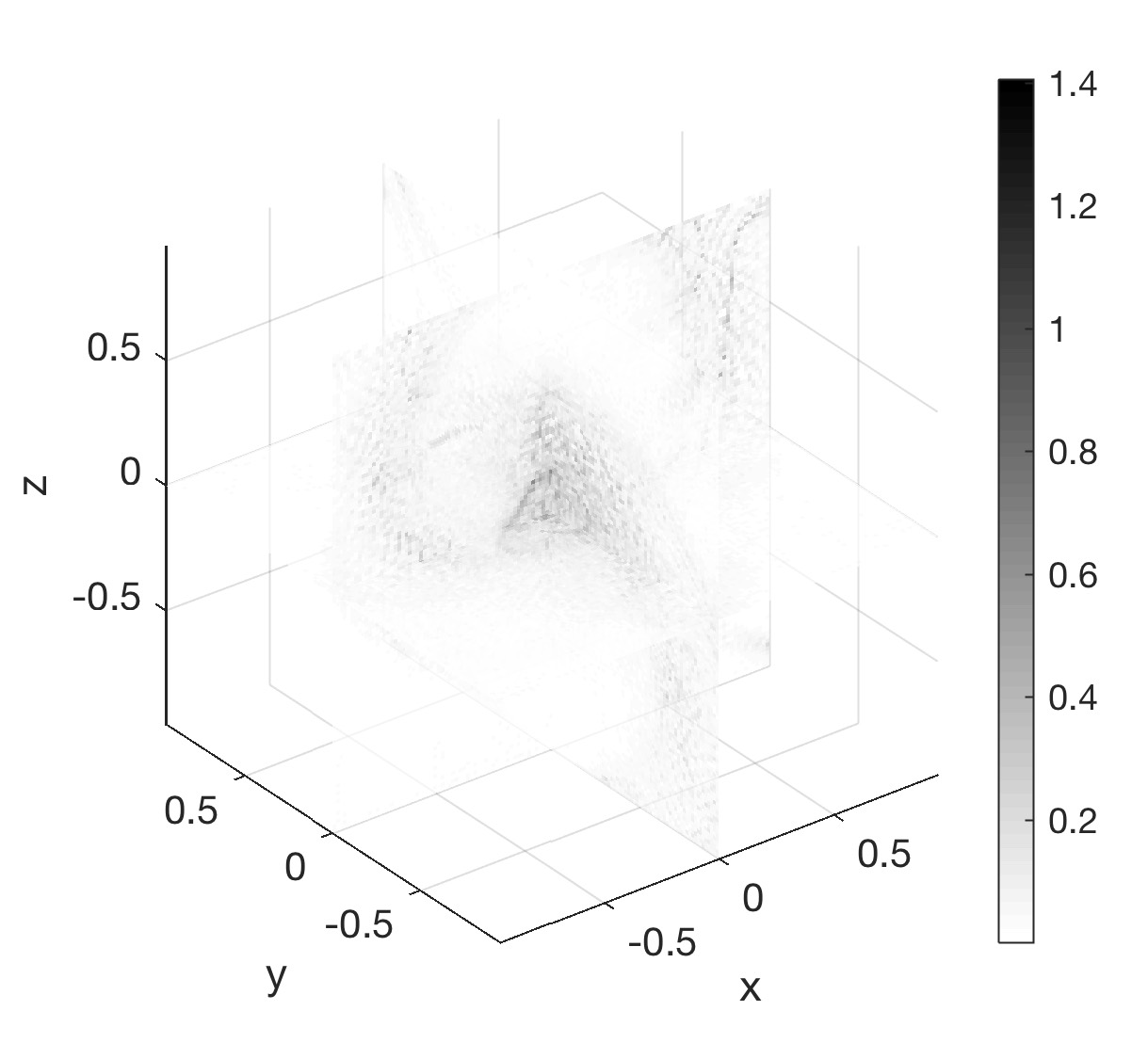}  
        &
    \includegraphics[width=0.33\textwidth]{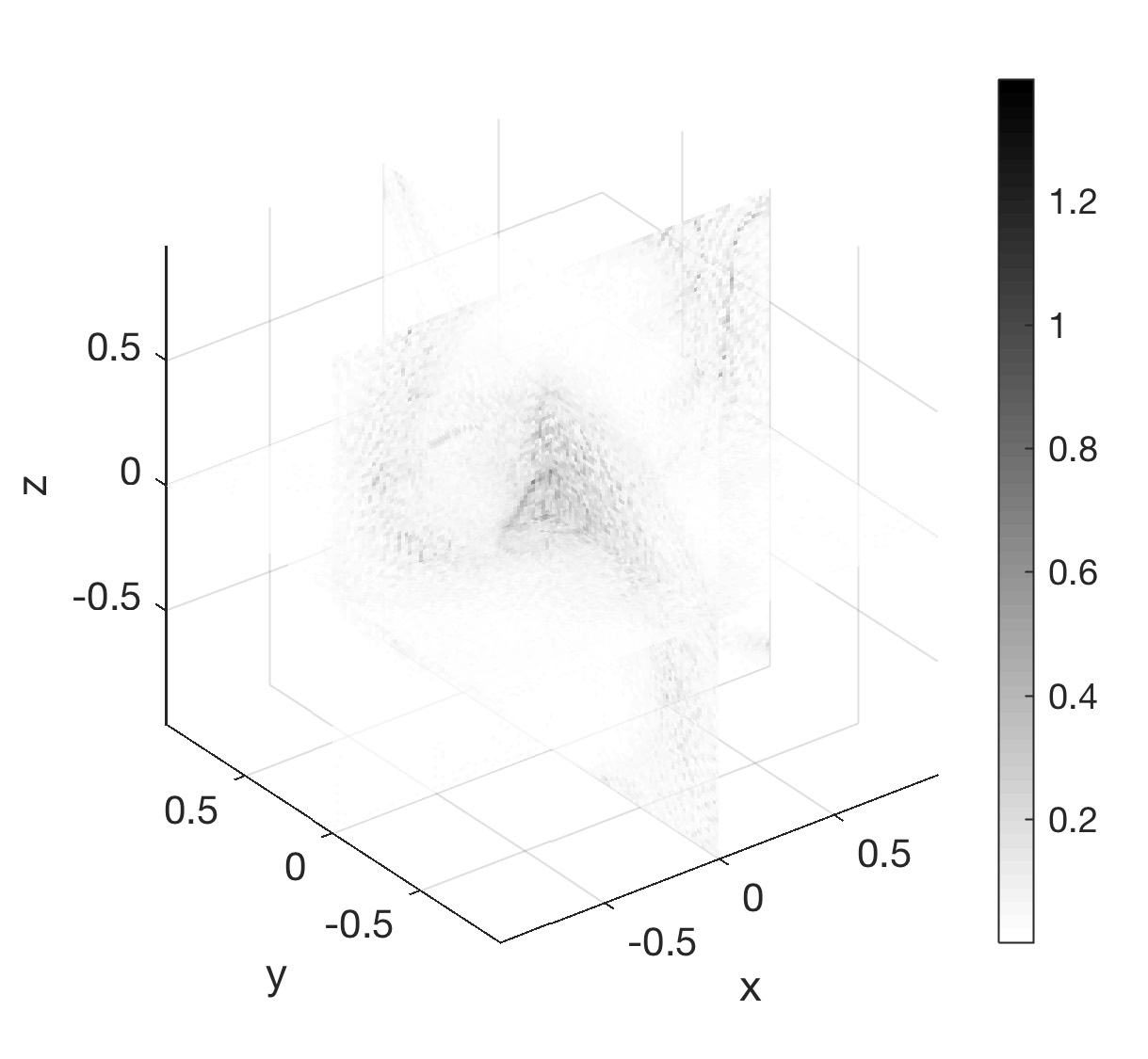} 
    \end{tabular}
    \caption{Exp. 3: Error for reconstruction of $\tilde{\gamma}_3$ measured in 
    Frobenius norm. The error for $\tilde{\gamma}_{3,\mathrm{H}}$ 
    with weighting by $\det H^{(j)}$ as in \eqref{eq:tgamma3H} (left),
    and for $\tilde{\gamma}_{3,\mathrm{F}}$ with weighting 
    by $1/ \lVert \tilde{\gamma}_3^{(j)} \rVert_F$ as in \eqref{eq:tgamma3F} 
    (middle), and relative error for $\gamma_3$ (right). }
    \label{fig:e4detHnorm9}
\end{figure}

\begin{figure}
    \centering
    \begin{tabular}{ccc}
    \includegraphics[width=0.33\textwidth]{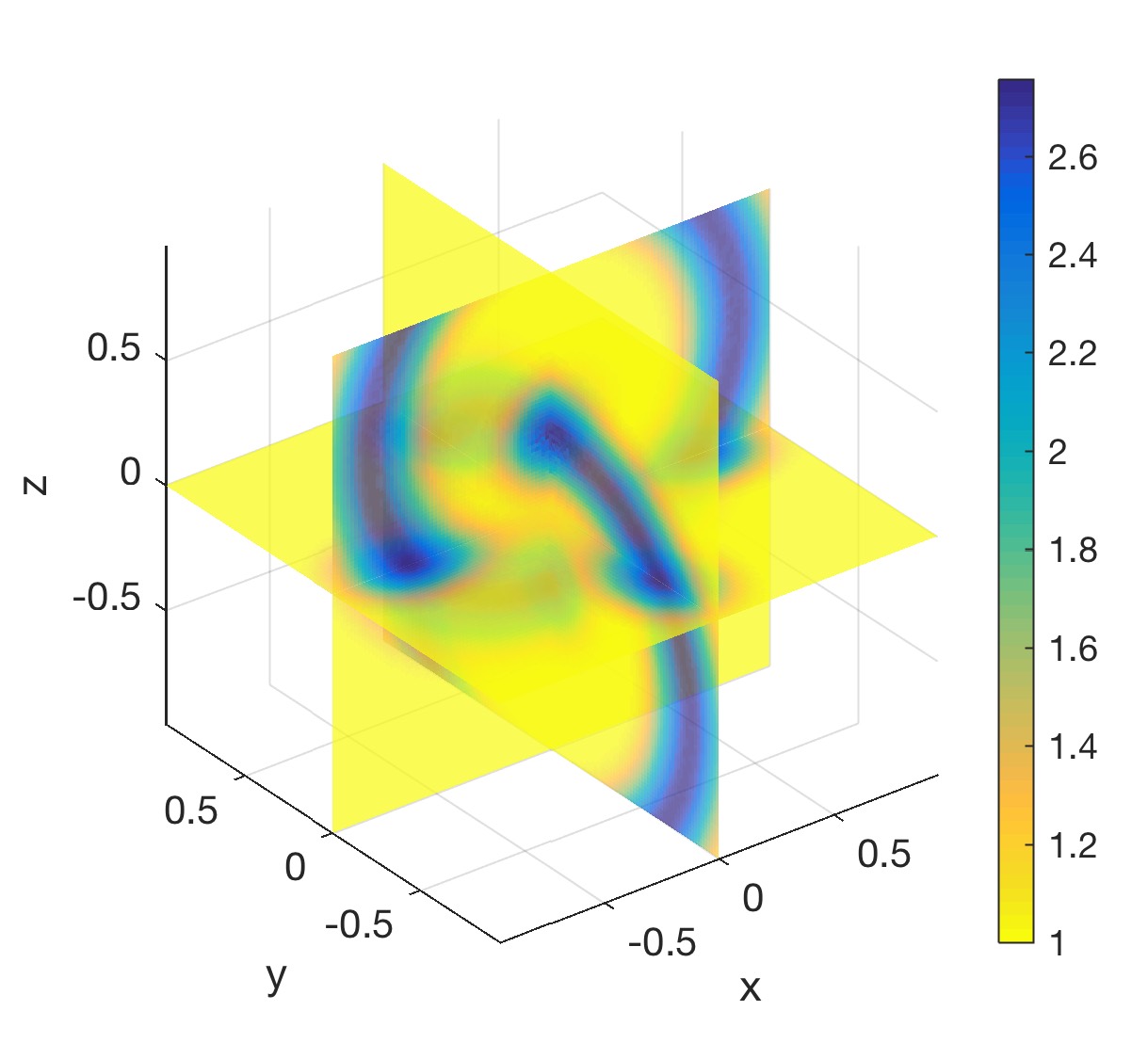}
        &
    \includegraphics[width=0.33\textwidth]{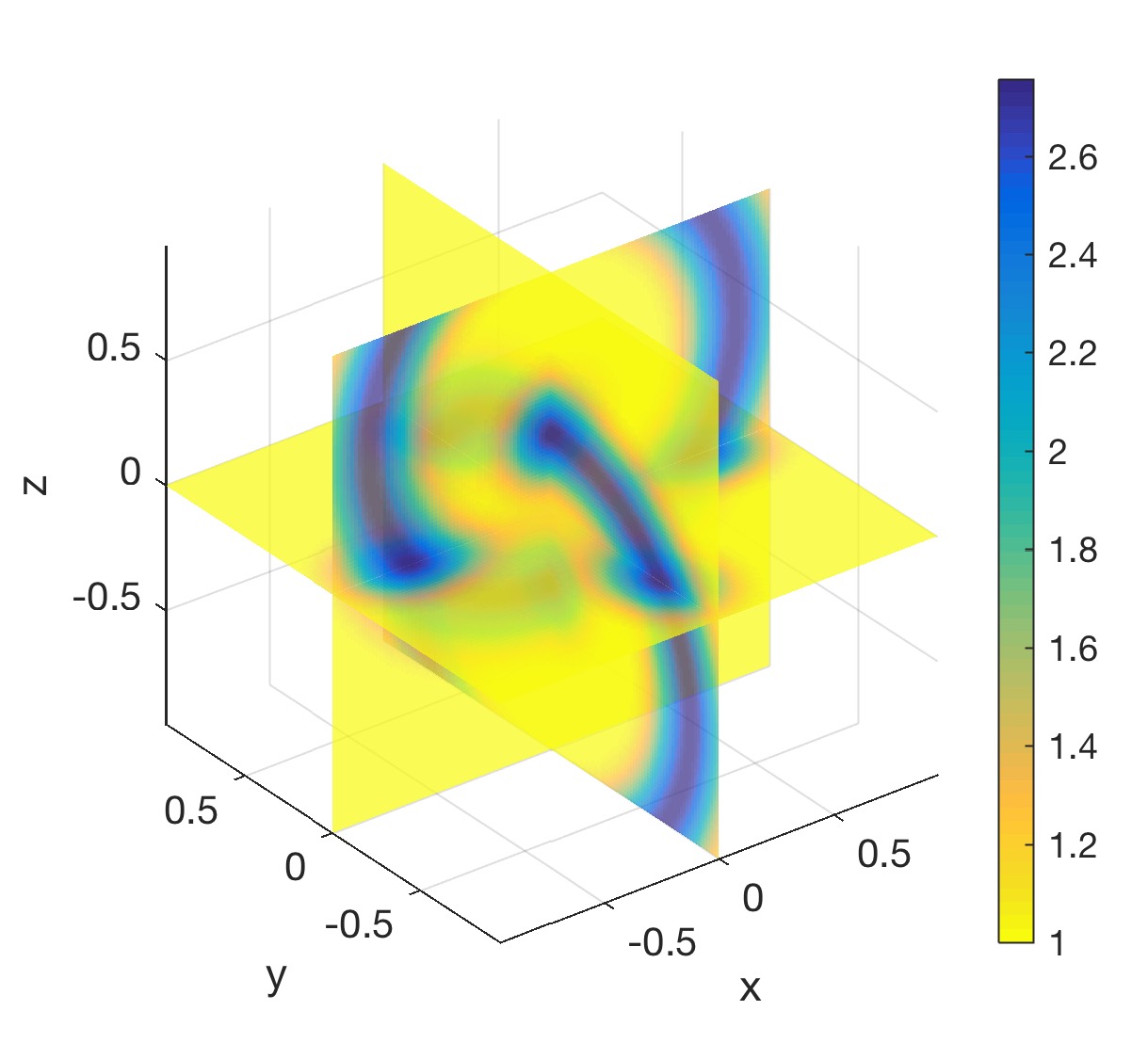}  
        &
    \includegraphics[width=0.33\textwidth]{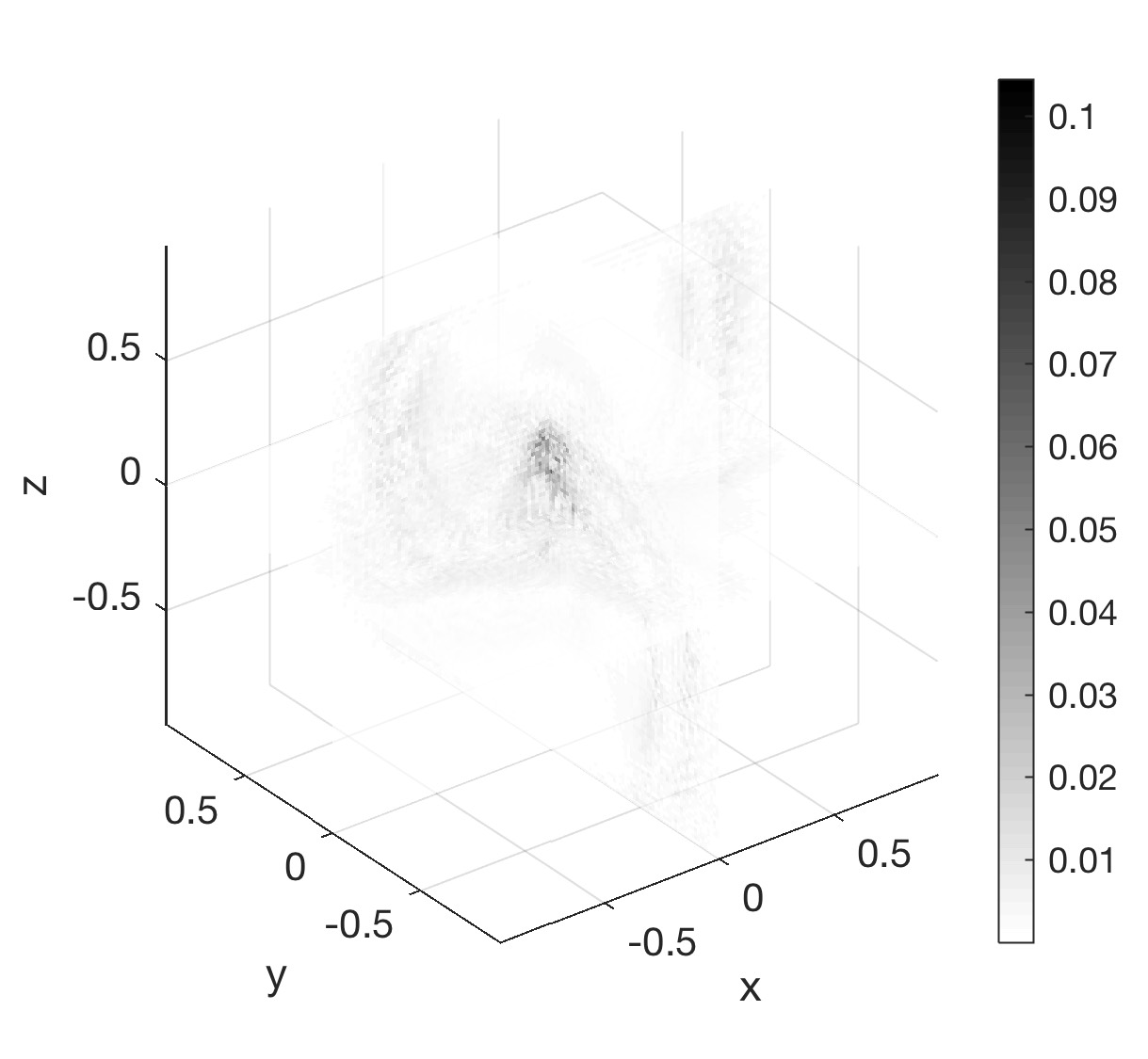}  \\
    \end{tabular}
    \caption{Exp. 3: Reconstruction of $\tau_3$ (left), true $\tau_3$ (middle), and relative error for the reconstruction in log-scale (right).}
    \label{fig:e4tau3}
\end{figure}

\newpage

\section{Conclusion}\label{sec:conclusion}

We have presented two reconstruction approaches confirming the appeal of power density functionals for the purpose of reconstructing isotropic and anisotropic conductivity tensors.

The first approach, aimed at reconstructing an isotropic conductivity, uses power densities associated with 3 conductivity solutions, and solves a local dynamical system for a quaternion-valued function, followed by a Poisson problem for the conductivity $\sigma$. Note that one could also solve for $\sigma$ by integrating \eqref{eq:lasteq} along curves, though the Poisson equation \eqref{eq:scalar_poisson} presents the advantage of projecting out the curl part of the right-hand-side of \eqref{eq:lasteq} before resolution.

The second approach consists in exploiting power densities of at least 5 solutions to produce pointwise reconstruction methods for anisotropic conductivities. In addition, the data surplus (compared to the first scenario) allows to bypass the ``dynamical'' step of the first approach, even for the purpose of reconstructing the unknown scalar factor at the end. While such reconstruction methods rely on conditions which may fail locally, we have successfully and efficiently circumvented this issue by exploiting redundancies associated with additional solutions, and avoiding the burden of keeping track of which subset of solutions satisfies the reconstructibility conditions locally. The {\bf stabilized 3+2 algorithm} presented is ``stabilized'' in the sense that the instabilities caused by vanishing determinants, a phenomenon which may or may not be avoided in theory, can be circumvented in practice. 

Numerical experiments (Section \ref{sec:numerics}) demonstrate that the introduced algorithms (Sections
\ref{sec:algo_iso} and \ref{sec:algo_aniso}) are able to reconstruct isotropic and anisotropic conductivities, from noiseless data. In particular, results from Experiment 3 illustrates that a global reconstruction of an anisotropic conductivity that fails to satisfy Hypothesis \ref{hyp:32} can be reconstructed via the stabilized 3+2 algorithm 
under the relaxed conditions given by Hypothesis \ref{hyp:m32}.

A number of detailed investigations, such as the effect of noisy data,
use of various regularizations, and further improvements to the algorithms
will be investigated in future work.

\bibliographystyle{siamplain}

\end{document}